\newcommand{\be}{\begin{equation}}
\newcommand{\ee}{\end{equation}}
\newcommand{\beq}{\begin{eqnarray}}
\newcommand{\eeq}{\end{eqnarray}}
\newtheorem{thm}{Theorem}[section]
\newtheorem{lma}{Lemma}[section]
\newtheorem{prop}{Proposition}[section]
\newtheorem{cor}{Corollary}[section]
\newtheorem{sublma}{Sublemma}[section]
\theoremstyle{remark}
\newtheorem{rem}{Remark}[section]
\numberwithin{equation}{section}
\def\dps{\displaystyle}
\def\be{\begin{equation}}
\def\ee{\end{equation}}
\def\bee{\begin{equation*}}
\def\eee{\end{equation*}}
\def\ol{\overline}
\def\lf{\left}
\def\ri{\right}
\def\K{K\"ahler }
\def\KE{K\"ahler-Einstein }
\def\KR{K\"ahler-Ricci }
\def\Ric{\text{\rm Ric}}
\def\Rm{\text{\rm Rm}}
\def\wt{\widetilde}
\def\p{\partial}
\def\ddbar{\partial\bar\partial}
\def\ol{\overline}
\def\heat{\lf(\frac{\p}{\p t}-\Delta\ri)}
\def\tr{\operatorname{tr}}
\def\sheat{\lf(\frac{\p}{\p s}-\wt\Delta\ri)}
\def\e{\epsilon}
\def\a{{\alpha}}
\def\b{{\beta}}
\def\ijb{{i\bar{j}}}
\def\R{\mathbb{R}}
\def\C{\mathbb{C}}
\def\ddb{\sqrt{-1}\partial\bar\partial}
\def\dps{\displaystyle}
\def\be{\begin{equation}}
\def\ee{\end{equation}}
\def\bee{\begin{equation*}}
\def\eee{\end{equation*}}
\def\ol{\overline}
\def\lf{\left}
\def\ri{\right}
\def\K{K\"ahler }
\def\KR{K\"ahler-Ricci }
\def\Ric{\text{\rm Ric}}
\def\Rm{\text{\rm Rm}}
\def\wt{\widetilde}
\def\p{\partial}
\def\ddbar{\partial\bar\partial}
\def\ol{\overline}
\def\heat{\lf(\frac{\p}{\p t}-\Delta\ri)}
\def\tr{\operatorname{tr}}
\def\e{\epsilon}
\def\a{{\alpha}}
\def\b{{\beta}}
\def\ijb{{i\bar{j}}}
\def\ii{\sqrt{-1}}
\def\R{\mathbb{R}}
\def\C{\mathbb{C}}
\def\n{\nabla}
\def\ppt{ \frac{\p}{\p t}}
\newcounter{mnotecount}[section]
\begin{document}

\title[]
{Instantaneously  complete Chern-Ricci flow and K\"ahler-Einstein metrics }

 \author{Shaochuang Huang$^1$}
\address[Shaochuang Huang]{Yau Mathematical Sciences Center, Tsinghua University, Beijing, China.}
\email{schuang@mail.tsinghua.edu.cn}
\thanks{$^1$Research   partially supported by China Postdoctoral Science Foundation \#2017T100059}

 \author{Man-Chun Lee}
\address[Man-Chun Lee]{Department of
 Mathematics, University of British Columbia, Canada}
\email{mclee@math.ubc.ca}

\author{Luen-Fai Tam$^2$}
\address[Luen-Fai Tam]{The Institute of Mathematical Sciences and Department of
 Mathematics, The Chinese University of Hong Kong, Shatin, Hong Kong, China.}
 \email{lftam@math.cuhk.edu.hk}
\thanks{$^2$Research  partially supported by Hong Kong RGC General Research Fund \#CUHK 14301517}

\renewcommand{\subjclassname}{
  \textup{2010} Mathematics Subject Classification}
\subjclass[2010]{Primary 32Q15; Secondary 53C44
}

\date{February 2019}

\begin{abstract}
In this work, we obtain some existence results of {\it Chern-Ricci Flows} and the corresponding {\it Potential Flows} on complex manifolds with possibly incomplete initial data. We  discuss the behaviour of the solution as $t\to 0$. These results can be viewed as a generalization of an existence result of Ricci flow by Giesen and Topping for surfaces of hyperbolic type to higher dimensions in certain sense. On the other hand, we also discuss the long time behaviour of the solution and obtain some sufficient conditions for the existence of \KE metric on complete non-compact Hermitian manifolds, which generalizes the work of Lott-Zhang and Tosatti-Weinkove to complete non-compact Hermitian manifolds with possibly unbounded curvature.

\end{abstract}

\keywords{Chern-Ricci flow, instantaneous completeness, \KE metric}

\maketitle

\markboth{Shaochuang Huang, Man-Chun Lee and Luen-Fai Tam}{Instantaneously  complete Chern-Ricci flow and K\"ahler-Einstein metrics }

\section{Introduction}

In this work, we will discuss conditions on the existence of {\it Chern-Ricci Flows} and the corresponding {\it Potential Flows} on complex manifolds with possibly incomplete initial data. The flows will be described later. We will also discuss conditions on long-time existence and convergence to K\"ahler-Einstein metrics.

 We begin with the definitions of Chern-Ricci flow and the corresponding potential flow. Let $M^n$ be a complex manifold with complex dimension $n$. Let $h$ be a Hermitian metric on $M$ and let $\theta_0$ be the \K form of $h$:
$$
\theta_0=\ii h_\ijb dz^i\wedge d\bar z^j
$$
where $h=h_\ijb dz^i\otimes d\bar z^j$ in local holomorphic coordinates. {In this work, Einstein summation convention is enforced.}

 In general, suppose $\omega$ is a real (1,1) form on $M$, if $\omega=\ii g_\ijb dz^i\wedge d\bar z^j$ in local holomorphic coordinates then   the corresponding Hermitian form $g$ is given by
$$
g=g_\ijb dz^i\otimes d\bar z^j.
$$
 In case $\omega$ is only nonnegative, we   call $g$ to be the Hermitian form of $\omega$ and $\omega$ is still called the \K form of $g$.

Now if $(M^n,h)$ is a Hermitian manifold with \K form $\theta_0$, let $\nabla$ be the Chern connection $\nabla $ of $h$ and $\Ric(h)$ be the Chern-Ricci tensor of $h$ (or the first Ricci curvature).  In holomorphic local coordinates such that $h=h_\ijb dz^i\otimes d\bar z^j$, the Chern Ricci form is given by
$$
\Ric(h)=-\ii \p\bar\p \log \det(h_\ijb).
$$
For the basic facts on Chern connection and Chern curvature, we refer readers to \cite[section 2]{ TosattiWeinkove2015}, see also \cite[Appendix A]{Lee-Tam} for example.

Let $\omega_0$ be another  nonnegative real (1,1) form on $M$. Define
\be\label{e-alpha}
\a:=-\Ric(\theta_0)+e^{-t}\lf(\Ric(\theta_0)+\omega_0\ri)
\ee
where $\Ric(\theta_0)$ is the Chern-Ricci curvature of $h$. We want to study the following  parabolic complex Monge-Amp\`ere equation:
\be\label{e-MP-1}
\left\{
  \begin{array}{ll}
   {\dps \frac{\p u}{\p t}}&=\displaystyle{\log\lf(\frac{(\a+\ii\ddbar u)^n}{\theta_0^n}\ri)}-u\ \ \text{in $M\times(0,S]$} \\
    u(0)&=0
  \end{array}
\right.
\ee
so that $\a+\ii\ddbar u>0$ for $t>0$.  When $M$ is compact and $\omega_0=\theta_0$ is smooth metric, it was first studied by Gill in \cite{Gill}. Here we are interested in the case when $\omega_0$ is possibly an incomplete metric on a complete non-compact Hermitian manifold $(M,h)$. Following \cite{Lott-Zhang}, \eqref{e-MP-1} will be called the {\it potential flow} of the following    normalized Chern-Ricci flow:
\be\label{e-NKRF}
\left\{
  \begin{array}{ll}
   {\dps \frac{\p}{\p t}\omega(t)} &=   -\Ric(\omega(t))-\omega(t); \\
    \omega(0)&=  \omega_0.
  \end{array}
\right.
\ee
It is easy to see that the normalized Chern-Ricci flow will coincide with the normalized \KR flow if $\omega_0$ is K\"ahler. It is well-known that if $\omega_0$ is a Hermitian metric and $\omega(t)$ is Hermitian and a solution to \eqref{e-NKRF} which is smooth up to $t=0$, then
\be\label{e-potential}
u(t)=e^{-t}\int_0^te^s\log \frac{(\omega (s))^n}{\theta_0^n}ds.
\ee
satisfies \eqref{e-MP-1}. Moreover, $u(t)\to0$ in $C^\infty$ norm in any compact set as $t\to0$. On the other hand, if $u$ is a solution to \eqref{e-MP-1} so that $\a+\ii\ddbar u>0$ for $t>0$, then
\be\label{e-potential-1}
\omega(t)=\a+\ii\ddbar u
\ee is a solution to \eqref{e-NKRF} on $M\times(0,S]$. However, even if we  know $u(t)\to0$ as $t\to0$  uniformly on $M$, it is still unclear that $\omega(t)\to\omega_0$ in  general.

 The first motivation is to study Ricci flows starting from metrics which are possibly incomplete and with unbounded curvature. In complex dimension one, the existence of Ricci flow starting from an arbitrary metric has been studied in details by Giesen and Topping \cite{GiesenTopping-1,GiesenTopping-2, GiesenTopping,Topping}.  In particular, the following was proved in \cite{GiesenTopping}: {\it If a surface admits a complete metric $H$ with constant negative curvature, then any initial data which may be incomplete can be deformed through the normalized Ricci flow for long time and converges to $H$. Moreover, the solution is instantaneously complete for $t>0$.}  In higher dimensions, recently it is proved by Ge-Lin-Shen \cite{Ge-Lin-Shen} that on a complete non-compact \K manifold $(M,h)$ with $\Ric(h)\leq -h$ and bounded curvature, if $\omega_0$ is a \K metric,  not necessarily complete, but with bounded $C^k$ norm with respect to $h$ for $k\ge 0$, then   \eqref{e-NKRF} has a long time solution which  converges to the unique K\"ahler-Einstein metric with negative scalar curvature, by solving \eqref{e-MP-1}. Moreover, the solution is instantaneously complete after it evolves.

Motivated by  the above mentioned  works, we first study the short time existence of the potential flow and the normalized Chern-Ricci flow. Our first result is the following:
\begin{thm}\label{main-instant-complete}
Let $(M^n,h)$ be a complete non-compact Hermitian manifold with complex dimension $n$. Suppose there is $K>0$ such that the following hold.
\begin{enumerate}
\item There is a {proper} exhaustion function $\rho(x)$ on $M$ such that
$$|\partial\rho|^2_h +|\ddb \rho|_h \leq K.$$
\item  $\mathrm{BK}_h\geq -K$;
\item The torsion of $h$, $T_h=\partial \omega_h$ satisfies
$$|T_h|^2_h +|\nabla^h_{\bar\partial} T_h |\leq K.$$
\end{enumerate}
Let $\omega_0$ be a nonnegative real (1,1) form with corresponding Hermitian form  $g_0$   on $M$ (possibly incomplete {or degenerate}) such that
\begin{enumerate}
\item[(a)] $g_0\le h$ and
$$|T_{g_0}|_h^2+|\nabla^h_{\bar\partial} T_{g_0}|_h+ |\nabla^{h}g_0|_h\leq K.$$

\item[(b)] There exist $f\in C^\infty(M)\cap L^\infty(M),\beta>0$ and $s>0$ so that  $$-\Ric(\theta_0)+e^{-s}(\omega_0+\Ric(\theta_0))+\ddb f\geq \beta \theta_0.$$

\end{enumerate}
Then \eqref{e-MP-1} has a solution on $M\times(0, s)$ so that $u(t)\to 0$ as $t\to0$ uniformly on $M$. Moreover, for any $0<s_0<s_1<s$, $\omega(t)=\a+\ii\ddbar u$ is the \K form of a complete Hermitian metric which is uniformly equivalent to $h$ on $M\times[s_0, s_1]$. In particular, $g(t)$ is complete for $t>0$.
\end{thm}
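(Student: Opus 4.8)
The plan is to obtain the solution of \eqref{e-MP-1} as a limit of solutions on an exhaustion of $M$ by relatively compact domains, combined with a priori estimates that are uniform on compact time-intervals $[s_0,s_1]\subset(0,s)$ but degenerate as $t\to 0$ in a controlled way. First I would fix an exhaustion $\{\Omega_j\}$ of $M$ by smooth relatively compact domains using the proper function $\rho$, and on each $\Omega_j$ solve the Dirichlet problem for \eqref{e-MP-1} with the boundary condition $u=0$ on $\p\Omega_j\times(0,s)$; since $\a$ need not be positive at $t=0$, one should first perturb $g_0$ to $g_0+\varepsilon h$ (a genuine metric), solve, and let $\varepsilon\to 0$ at the end, using condition (a) to keep the torsion and first-derivative bounds of the perturbed data uniform. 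On each $\Omega_j$ (with $g_0+\varepsilon h$ in place of $g_0$) the equation is a nondegenerate parabolic complex Monge--Amp\`ere equation with smooth data, so short-time existence is standard, and the hypothesis (b) together with the maximum principle should give long-time existence up to $t=s$ on each $\Omega_j$.

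The heart of the argument is the collection of a priori estimates that survive the passage $j\to\infty$, $\varepsilon\to 0$. There are three of them. (i) A $C^0$ bound: using $f\in L^\infty$ and (b), the functions $\underline u=$ (a suitable multiple of) $e^{-t}\cdot(\text{const})$ and $\ol u=$ a similar barrier bound $u$ from both sides, giving $|u|\le C(1+|t|)$ uniformly, and in particular $u(t)\to 0$ uniformly as $t\to0$ — here one wants the barriers to be built from $f$ and constants so they are independent of $j$. (ii) An upper bound for $\tr_{\theta_0}\omega(t)$, i.e.\ a second-order estimate of the form $\omega(t)\le C(t)\,h$ on $[s_0,s_1]$: this is the Chern--Ricci analogue of the Aubin--Yau estimate, applied to a quantity like $\log\tr_{\theta_0}\omega - A u$ with $A$ large, where the bad torsion and curvature terms are absorbed using hypotheses (2),(3) on $h$ and (a) on $g_0$; the reason this is uniform on $[s_0,s_1]$ but not up to $0$ is that one pays a factor that blows up like a negative power of the elapsed time, exactly as in Gill's and Lott--Zhang's arguments. (iii) A lower bound $\omega(t)\ge c(t)\,h$ on $[s_0,s_1]$: here one feeds the upper bound from (ii) back into the Monge--Amp\`ere equation, $\omega^n/\theta_0^n = e^{\p_t u + u}$, to control $\det\omega$ from below, and then combines with the trace upper bound to conclude. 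The completeness of $g(t)$ for $t>0$ then follows immediately from (iii): $g(t)\ge c(t) h$ and $h$ is complete.

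The remaining point is to produce the limit itself. With the uniform $C^0$ estimate and the two-sided comparison $c(t)h\le\omega(t)\le C(t)h$ on $[s_0,s_1]$, local higher-order (parabolic Schauder / Evans--Krylov-type, or the Chern--Ricci local estimates of Gill/Tosatti--Weinkove) estimates give uniform $C^\infty_{loc}$ bounds for $u$ on $M\times[s_0,s_1]$ for every $0<s_0<s_1<s$; a diagonal argument over a sequence $\Omega_j\uparrow M$, $\varepsilon_j\downarrow 0$ and an exhausting sequence of time-intervals yields a smooth solution $u$ of \eqref{e-MP-1} on $M\times(0,s)$ with $u(t)\to0$ uniformly. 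Finally $\omega(t)=\a+\ii\ddbar u$ solves \eqref{e-NKRF} by \eqref{e-potential-1}, and the estimate $c(s_0)h\le\omega(t)\le C(s_1)h$ on $[s_0,s_1]$ is precisely the claimed uniform equivalence.

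I expect the main obstacle to be estimate (ii), the second-order bound: the initial data $g_0$ is allowed to be incomplete and even degenerate and carries only the torsion and first-derivative control in (a), so the usual Aubin--Yau computation produces torsion cross-terms (from $T_h$, $\nabla^h_{\bar\p}T_h$, $T_{g_0}$, $\nabla^h g_0$, $\mathrm{BK}_h$) that must all be dominated by the single constant $K$; arranging the auxiliary function (the right power of $t$, the right multiple of $u$, the role of the exhaustion function $\rho$ to make the maximum principle applicable on noncompact $M$ at the limiting stage) so that everything closes up — and stays independent of $j$ and $\varepsilon$ — is the delicate part. The lower bound (iii) and the limiting procedure are comparatively routine once (ii) is in hand.
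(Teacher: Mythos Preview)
Your overall architecture---approximate, derive uniform a~priori estimates that degenerate only as $t\to 0$, then pass to a limit via local higher-order estimates---matches the paper. The a~priori estimates you list (the $C^0$ bound on $u$, the trace upper bound $\tr_h\omega(t)\le C(t)$ via an Aubin--Yau type computation with torsion terms, and the lower bound recovered from the volume) are exactly the content of the paper's Lemmas~\ref{l-uudot-upper-1}--\ref{l-trace-2} and Corollary~\ref{eq-g}, and you correctly single out the second-order estimate as the delicate step.

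Where your proposal diverges from the paper, and where there is a genuine gap, is the approximation scheme. You propose solving Dirichlet problems for \eqref{e-MP-1} on bounded domains $\Omega_j$ with $u=0$ on $\partial\Omega_j$. But with that boundary condition the metric on the boundary is forced to be $\omega(t)|_{\partial\Omega_j}=\alpha(t)=-(1-e^{-t})\Ric(\theta_0)+e^{-t}(\omega_0+\varepsilon\theta_0)$, and nothing in the hypotheses prevents this from being degenerate or even indefinite for $t>0$; in particular neither long-time existence on $\Omega_j$ nor the trace estimate (whose maximum could sit on $\partial\Omega_j$) is under control. The paper sidesteps this entirely: instead of restricting to domains, it modifies the \emph{initial data} by setting
\[
\gamma_0(\rho_0,\varepsilon)=\eta_0\,\omega_0+(1-\eta_0)\,\theta_0+\varepsilon\,\theta_0,
\]
with $\eta_0$ a cutoff of the exhaustion $\rho$ at scale $\rho_0$. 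Then $\gamma_0$ is a \emph{complete} Hermitian metric uniformly equivalent to $h$, with bounded bisectional curvature and torsion (depending on $\rho_0,\varepsilon$), so the global existence result of \cite{Lee-Tam} (the paper's Lemma~\ref{l-perturbed-1}) yields a solution on all of $M\times[0,s)$ with $g(t)$ equivalent to $h$. The a~priori estimates are then proved on the whole manifold using the noncompact maximum principle (Lemma~\ref{max}, via the exhaustion $\rho$), with constants tracked to be independent of $\rho_0$ and $\varepsilon$; one first sends $\rho_0\to\infty$ to get a solution with initial data $\omega_0+\varepsilon\theta_0$, and then $\varepsilon\to 0$. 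So the role you assign to $\rho$ (localising the maximum principle after passing to the limit) is in the paper played one level earlier, and the role of your $\Omega_j$ is played by the cutoff in the initial metric rather than by a boundary condition on the solution.
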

Here $\mathrm{BK}_h\geq -K$ means that for any unitary frame $\{e_k\}$ of $h$, we have $R(h)_{i\bar ij\bar j}\geq -K$ for all $i,j$.
\begin{rem}
It is well-known that when $(M,h)$ is K\"ahler with bounded curvature, then condition (1) will be satisfied, \cite{Shi1989,Tam2010}. See also \cite{NiTam2013,Huang2018} for related results under various assumptions.
\end{rem}

Condition (b) was used in \cite{Lott-Zhang,TosattiWeinkove2015, Lee-Tam}   with $\omega_0$ replaced by $\theta_0$ and  is motivated as pointed out in \cite{Lott-Zhang} as follows. If we are considering  cohomological class instead, in case that  $\omega(t)$ is closed, then \eqref{e-NKRF} is:
$$
\partial_t[\omega(t)]=-[\Ric(\omega(t)]-[\omega(t)]
$$
and so
$$
[\omega(t)]=-(1-e^{-t})[\Ric(\theta_0)]+e^{-t}[\omega_0].
$$
Condition (b) is used to guarantee that $\omega(t)>0$. In our case $\omega_0,\theta_0, \omega(t)$ may not be closed and $\omega_0$ may degenerate. These may cause some difficulties. Indeed, the result is analogous to running \KR flow from a rough initial data. When $M$ is compact, the potential flow from a rough initial data had already been studied by several authors, see for example \cite{BG2013,SongTian2017,To2017} and the references therein.

On the other hand, a solution of \eqref{e-MP-1} gives rise to a solution of  \eqref{e-NKRF} when $t>0$. It is rather delicate to see if the corresponding solution of \eqref{e-NKRF} will attain the initial Hermitian form $\omega_0$. In this respect, we will prove the following:
\begin{thm}\label{t-initial-Kahler-1}
With the same notation  and assumptions as in Theorem \ref{main-instant-complete}. Let $\omega(t)$ be the solution of \eqref{e-NKRF} obtained in the theorem.  If in addition $h$ is \K and $d\omega_0=0$. Let $U=\{\omega_0>0\}$.  Then $\omega(t)\rightarrow \omega_0$ in $C^\infty(U)$ as $t\rightarrow 0$, {uniformly on compact subsets of $U$}. 
\end{thm}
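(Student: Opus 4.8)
The plan is to establish local smooth convergence $\omega(t)\to\omega_0$ on $U=\{\omega_0>0\}$ by a local regularity argument combined with the already-known facts that $u(t)\to 0$ uniformly on $M$ and that $\omega(t)$ is uniformly equivalent to $h$ on $M\times[s_0,s_1]$. The key point is that on a relatively compact subset $V\Subset U$, the initial form $\omega_0$ is uniformly bounded \emph{below} by a multiple of $h$ (since $\omega_0>0$ there and $\overline V$ is compact), and this positivity should propagate for a short time, allowing one to treat the flow as a genuine parabolic Monge-Amp\`ere equation with smooth data near $t=0$ on $V$.

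First I would observe that since $h$ is \K and $d\omega_0=0$, the quantity $\a$ in \eqref{e-alpha} is a closed real $(1,1)$ form, so on $U$ we may write $\omega_0=\a(0)+\ii\ddbar\varphi_0$ for a local potential, and more importantly the potential flow \eqref{e-MP-1} becomes the standard parabolic complex Monge-Amp\`ere flow with no torsion terms. Next, on a fixed $V\Subset U$ choose $\lambda>0$ with $\omega_0\ge \lambda\, h$ on a neighborhood of $\overline V$; I want a uniform lower bound $\omega(t)\ge c\,h$ on $V\times(0,\tau]$ for some $\tau>0$. This is the heart of the matter: it should follow from a maximum-principle / barrier argument applied to $\log(\omega(t)^n/\theta_0^n)$ or to the trace $\operatorname{tr}_{\omega(t)}h$, using the uniform upper bound $\omega(t)\le C h$ from Theorem \ref{main-instant-complete}, the curvature bound $\mathrm{BK}_h\ge -K$, the exhaustion function $\rho$ of condition (1) to localize, and the fact that $u(t)\to 0$ uniformly. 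Concretely, from \eqref{e-potential} one has $e^t u(t)=\int_0^t e^s\log(\omega(s)^n/\theta_0^n)\,ds$, and the upper volume bound gives $\log(\omega(s)^n/\theta_0^n)\le C$, so $u(t)\ge -Ct$; combined with a differential inequality for $\partial_t u$ this should pin the volume form from below on $V$ for small $t$.

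Once $\omega(t)\ge c\,h$ and $\omega(t)\le C\,h$ on $V\times(0,\tau]$, I would run the standard local higher-order estimates for the \KR flow / parabolic Monge-Amp\`ere equation (Calabi-type third-order estimate, then Schauder / bootstrapping, localized via cutoffs in $\rho$ as in \cite{TosattiWeinkove2015,Lee-Tam}) to get, for every $k$, a uniform bound $\|g(t)\|_{C^k(V')}\le C_{k,V'}$ on $V'\times(0,\tau]$ for $V'\Subset V$, with constants independent of $t$ down to $t=0$. This gives precompactness in $C^\infty_{loc}(U)$ of the family $\{\omega(t)\}_{t\in(0,\tau]}$. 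Finally, to identify the limit: any subsequential limit $\omega(t_j)\to\tilde\omega$ in $C^\infty_{loc}(U)$ must equal $\omega_0$, because passing to the limit in \eqref{e-potential-1} and using $u(t)\to 0$ uniformly together with the definition \eqref{e-alpha} of $\a$ (so $\a(t)\to \a(0)=\omega_0$) and the fact that $\ii\ddbar u(t)\to 0$ in the distributional sense (hence, by the $C^\infty_{loc}$ bounds, in $C^\infty_{loc}(U)$) forces $\tilde\omega=\omega_0$; uniqueness of the limit then upgrades the subsequential convergence to full convergence $\omega(t)\to\omega_0$ in $C^\infty_{loc}(U)$.

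The main obstacle I anticipate is the uniform two-sided equivalence $c\,h\le\omega(t)\le C\,h$ on $V\times(0,\tau]$ \emph{with constants controlled as $t\to 0$}: Theorem \ref{main-instant-complete} only asserts such equivalence on $[s_0,s_1]\subset(0,s)$ with constants that may blow up as $s_0\to 0$, so the upper bound near $t=0$ must be reproved locally, and the lower bound near $t=0$ genuinely uses that we have restricted to $U$ where $\omega_0$ is nondegenerate. I expect this to require a careful localized maximum principle for $\operatorname{tr}_{\omega(t)} h$ (upper bound) and for $\operatorname{tr}_h \omega(t)$ or $\log(\omega^n/\theta_0^n)$ (lower bound), absorbing torsion and curvature terms using conditions (1)--(3) and (a) and the uniform bound on $u$; the \K hypothesis on $h$ and $d\omega_0=0$ should simplify several of these error terms. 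The remaining steps are then routine parabolic regularity and a standard limit argument.
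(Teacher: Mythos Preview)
Your overall architecture---establish local two-sided bounds $c\,h\le\omega(t)\le Ch$ on $V\times(0,\tau]$ uniform as $t\to 0$, then bootstrap to $C^\infty_{\mathrm{loc}}$ estimates and identify the limit---matches the paper's. But the execution of the key step has two genuine gaps. First, you cannot run a maximum-principle argument on $\omega(t)$ itself starting at $t=0$: the solution is defined only for $t>0$, and its smooth extension to $t=0$ is exactly what is in question. The paper works instead with the approximating flows $g_i(t)$ (normalized \KR flow with initial data $g_0+\epsilon_i h$) from which $\omega(t)$ was built; these are smooth up to $t=0$ with $g_i(0)\ge\omega_0\ge\lambda h$ on $V$, so one argues from $t=0$ for each $i$, obtains estimates uniform in $i$, and passes to the limit.

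Second, and more substantially, localizing with a fixed spatial cutoff $\eta$ (built from $\rho$ or local coordinates) introduces error terms of size $(\eta+\epsilon)^{-1}\,\tr_g h$, since $|\nabla\eta|_{g(t)}^2$ scales like $\tr_g h$; these compete directly with the only good term $-\tr_g h$ in the differential inequality, and the argument does not close near $\partial\{\eta>0\}$. The paper instead uses a cutoff in the \emph{evolving} distance $d_t(p,\cdot)$: since $|\nabla d_t|_{g(t)}\equiv 1$, the cutoff error is $O((\Phi+\epsilon)^{-1})$ with \emph{no} $\tr_g h$ factor, and can be absorbed. Controlling $\heat d_t$ requires Perelman's distance-distortion lemma, and this is where the hypotheses ``$h$ K\"ahler, $d\omega_0=0$'' enter essentially (not merely to simplify torsion terms): they make each $g_i(t)$ a genuine Riemannian Ricci flow so that Perelman's lemma applies---compare the much weaker non-K\"ahler statement, Proposition~\ref{p-initial-CR}. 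Perelman's lemma in turn needs a curvature bound, obtained from the Sherman--Weinkove local estimate, which itself presupposes the two-sided metric equivalence one is after. The paper breaks this circularity by a bootstrap (Lemma~\ref{local-bound}): assuming $\tilde g_i\ge\alpha^2 h$ on $B_{\tilde g_i(t)}(p,\sigma)$ for $t\in[0,\gamma_2\alpha^{8(n-1)}]$, one deduces $\tilde g_i\ge\gamma_1^2 h$ there with $\gamma_1$ \emph{independent of} $\alpha$, and then takes $\alpha$ maximal. This mechanism is absent from your proposal.
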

 We should remark that if in addition $h$ has bounded curvature, then the theorem follows easily from pseudo-locality. The theorem can be applied to the cases studied in \cite{Ge-Lin-Shen} and to the case that $-\Ric(h)\ge \b\theta_0$ outside a compact set $V$ and   $\omega_0>0$ on $V$ with $\omega_0$ and its first covariant derivative are bounded. In particular, when $\Omega$ is a bounded strictly pseudoconvex domain of another manifold $M$ with defining function $\varphi$, then the $\Omega$ with the metric $h_{i\bar j}=-\partial_i \partial_{\bar j}\log(-\varphi)$ will satisfy the above, see \cite[(1.22)]{ChengYau1982}.

Another motivation here is to study the existence of K\"ahler-Einstein metric with negative scalar curvature on complex manifolds using geometric flows.   In \cite{Aubin, Yau1978-2}, Aubin and Yau proved that if $M$ is a compact \K manifold with negative first Chern class $c_1(M)<0$, then it admits a unique  K\"ahler-Einstein metric with negative scalar curvature by studying the elliptic complex Monge-Amp\`ere equation. Later, Cao \cite{Cao} reproved the above result using the K\"ahler-Ricci flow by showing that one can deform a suitable initial \K metric through normalized \KR flow to the K\"ahler-Einstein metric.  Recently, Tosatti and Weinkove \cite{TosattiWeinkove2015}  proved that under the same condition that $c_1(M)<0$ on a compact complex manifold, the normalized Chern-Ricci flow \eqref{e-NKRF} with an arbitrary Hermitian initial metric also has a long time solution and converges to the K\"ahler-Einstein metric with negative scalar curvature.
In \cite{ChengYau1982}, Cheng and Yau proved that if $M$ is a complete non-compact \K manifold with Ricci curvature bounded above by a negative constant, injectivity radius bounded below by a positive constant and curvature tensor with its covariant derivatives are bounded, then $M$ admits a unique complete  K\"ahler-Einstein metric with negative scalar curvature.    In \cite{Chau04}, Chau used K\"ahler-Ricci flow to prove that if $(M, g)$ is a complete non-compact \K manifold with bounded curvature and $\Ric(g)+g=\ii\ddbar f $ for some smooth bounded function $f$, then it also admits a complete  K\"ahler-Einstein metric with negative scalar curvature.   Later, Lott and Zhang \cite{Lott-Zhang} generalized Chau's result by  assuming $$-\Ric(g)+\ii\ddbar f\ge\b g$$ for some smooth  function $f$ with bounded $k$th covariant derivatives for each $k\geq0$  and positive constant $\b$. In this work, we will generalize the results in \cite{Lott-Zhang,TosattiWeinkove2015} to complete non-compact Hermitian manifolds with possibly unbounded curvature.

For the long time existence and convergence, we will prove the following:
\begin{thm}\label{main-longtime}
Under the assumption of Theorem \ref{main-instant-complete}, if in addition, $$-\Ric(h)+\ddb f\geq \beta \theta_0$$ for some $f\in C^\infty(M)\cap L^\infty(M)$, $\beta>0$. Then the solution constructed from Theorem \ref{main-instant-complete} is a longtime solution and converges to a unique complete \K Einstein metric with negative scalar curvature on $M$.
\end{thm}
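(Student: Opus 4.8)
The plan is to carry out the classical long-time-existence-and-convergence scheme for the normalized \KR flow (Cao \cite{Cao}, Tosatti--Weinkove \cite{TosattiWeinkove2015}, Lott--Zhang \cite{Lott-Zhang}), with every application of the maximum principle replaced by its noncompact version, the barriers and cut-offs being manufactured from the exhaustion $\rho$ of condition (1) together with the bounds (2), (3) and (a); the extra hypothesis $-\Ric(h)+\ddb f\ge\b\theta_0$ is precisely what upgrades all the estimates to be uniform in $t$.

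\emph{Long-time existence.} Write the extra hypothesis as $-\Ric(\theta_0)+\ddb f_0\ge\b_0\theta_0$ with $f_0\in C^\infty(M)\cap L^\infty(M)$ and $\b_0>0$. For any $S>0$, since $\omega_0\ge0$, one has $-\Ric(\theta_0)+e^{-S}(\omega_0+\Ric(\theta_0))+\ddb((1-e^{-S})f_0)=(1-e^{-S})(-\Ric(\theta_0)+\ddb f_0)+e^{-S}\omega_0\ge(1-e^{-S})\b_0\,\theta_0$, so condition (b) of Theorem \ref{main-instant-complete} holds with $s=S$, $f=(1-e^{-S})f_0$ and $\b=(1-e^{-S})\b_0$. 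Hence for every $S>0$ that theorem yields a solution of \eqref{e-MP-1} on $M\times(0,S)$ with $u(t)\to0$ uniformly as $t\to0$; by uniqueness of such a solution (the maximum-principle comparison used in the proof of Theorem \ref{main-instant-complete}) these agree on overlaps and glue to a solution $u$ on $M\times(0,\infty)$, so $\omega(t)=\a+\ii\ddbar u$ is a long-time solution of \eqref{e-NKRF}.

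\emph{Uniform a priori estimates.} By Theorem \ref{main-instant-complete}, $\omega(1)$ is the \K form of a complete Hermitian metric uniformly equivalent to $h$, whose covariant derivatives relative to $h$ are controlled by the interior estimates in that proof; so it suffices to estimate on $[1,\infty)$. We need, uniformly in $t\ge1$: (i) $\|u(\cdot,t)\|_{L^\infty(M)}\le C$, from \eqref{e-MP-1} and barriers built from $(1-e^{-t})f_0$; (ii) a bound for $\dot u:=\log(\omega^n/\theta_0^n)-u$, from the Chern--Ricci-flow evolution equation $\big(\tfrac{\p}{\p t}-\Delta_{\omega(t)}\big)\dot u=-\dot u-\tr_{\omega(t)}\!\big(e^{-t}(\Ric(\theta_0)+\omega_0)\big)+(\text{torsion terms})$ coupled with the evolution of $\tr_{\omega(t)}\theta_0$ (or of $u+\dot u$); and (iii) the second-order bound $C^{-1}h\le\omega(t)\le C\,h$, whose upper half comes from the evolution of $\log\tr_h\omega$ minus an auxiliary term, using $\mathrm{BK}_h\ge-K$, the torsion bounds in (3) and (a), the bound on $\dot u$, and --- decisively --- $-\Ric(h)+\ddb f\ge\b\theta_0$ to give the bad terms a definite sign, while the lower half follows by combining it with $\omega^n=e^{\dot u+u}\theta_0^n$ and (i)--(ii). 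Each of these is a maximum-principle argument on noncompact $M$ with possibly unbounded curvature, carried out with cut-offs coming from $\rho$ and the bounds (1)--(3); this is the technical heart of the proof, and it is here that the strict negativity makes the constants independent of $t$ rather than degenerating as in Theorem \ref{main-instant-complete}.

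\emph{Convergence, uniqueness, and the main obstacle.} The estimate (ii) sharpens to $|\dot u|\le Ce^{-t}$ on $[1,\infty)$ (via the evolution of $e^t\dot u$, or of $(e^t-1)\dot u+u+nt$, again with the $\rho$-cut-off). Hence $u(\cdot,t)$ is $L^\infty$-Cauchy as $t\to\infty$, so $u(\cdot,t)\to u_\infty$ uniformly; by (iii) and local parabolic Schauder and Evans--Krylov estimates all derivatives of $u$ are locally uniformly bounded, so the convergence is in $C^\infty_{loc}(M)$, and $\a\to-\Ric(\theta_0)$ in $C^\infty_{loc}$. Therefore $\omega(t)\to\omega_\infty:=-\Ric(\theta_0)+\ii\ddbar u_\infty$ in $C^\infty_{loc}$, and $\omega_\infty$ is the \K form of a complete Hermitian metric uniformly equivalent to $h$. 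Letting $t\to\infty$ in \eqref{e-MP-1} gives $u_\infty=\log(\omega_\infty^n/\theta_0^n)$, i.e. $\ii\ddbar u_\infty=-\Ric(\omega_\infty)+\Ric(\theta_0)$, whence $\omega_\infty=-\Ric(\omega_\infty)$; in particular $\omega_\infty=\ii\p\bar\p\log\det g_\infty$ is $d$-closed, so $g_\infty$ is a complete \KE metric with $\Ric(g_\infty)=-g_\infty$ and negative scalar curvature. Uniqueness of such a metric in the natural class (complete \KE metrics with bounded potential relative to $\omega_\infty$, equivalently uniformly equivalent to $h$) follows from the maximum principle applied to the difference of potentials of two complete \KE metrics, as in \cite{ChengYau1982,Lott-Zhang}. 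The main obstacle, as throughout the paper, is the noncompactness together with the possibly unbounded curvature of $h$, which forces one to replace the maximum principle by barrier/cut-off arguments based on $\rho$ and to track the torsion terms --- absent in the \K case --- in every evolution equation; concretely, the crux is the uniform-in-$t$ second-order estimate (iii), since once $\omega(t)\simeq h$ on all of $[1,\infty)$ is known, convergence follows by standard arguments.
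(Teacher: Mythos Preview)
Your proposal is correct and follows the same strategy as the paper. The paper is more economical in that it reuses the a priori estimates already proved in Section~\ref{s-aprior}: once $-\Ric(h)+\ddb f\ge\b\theta_0$ makes condition~(b) hold for every $s$ (with $\hat f=(1-e^{-s})f$), the constant $E$ in Lemma~\ref{l-trace-2}(i) stays bounded as $s_1\to\infty$, so the long-time solution and the uniform equivalence $C^{-1}h\le g(t)\le Ch$ on every $[a,\infty)$ come directly from the construction in Theorem~\ref{t-instant-complete} without any gluing or repetition of the second-order estimate; the only new ingredient is the decay $\dot u\ge -Ce^{-t/2}$ (Lemma~\ref{du-convergence}), slightly weaker than your claimed $e^{-t}$ but still integrable and hence sufficient for the $C^\infty_{loc}$ convergence.
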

As a consequence, we see that if $h$ satisfies the conditions in the theorem, then $M$ supports a complete K\"ahler-Einstein metric with negative scalar curvature, generalizing the results in \cite{Lott-Zhang,TosattiWeinkove2015}.

The paper is organized as follows: In section 2, we will derive a priori estimates along the potential flow and apply it in section 3 to prove Theorem \ref{main-instant-complete}. Furthermore, we will study the short time behaviour of the constructed solution. In section 4, we will  prove the Theorem \ref{main-longtime} and discuss longtime behaviour for general \KR flow if the initial data satisfies some extra condition. In   Appendix A, we will collect some information about the relation between normalized Chern-Ricci flow and unnormalized one {together with  some useful differential inequalities. In Appendix B, we will state a maximum principle which will be used in this work.}

\section{a priori estimates for the potential flow}\label{s-aprior}

We will study the short time existence of the potential flow \eqref{e-MP-1} with $\omega_0$ only being assumed to be nonnegative. We need   some a priori estimates for the flow. In this section, we always assume the following:
\begin{enumerate}
\item There is a {proper} exhaustion function $\rho(x)$ on $M$ such that
$$|\partial\rho|^2_h +|\ddb \rho|_h \leq K.$$
\item $\mathrm{BK}_h\geq -K$.
\item The torsion of $h$, $T_h=\partial \omega_h$ satisfies
$$|T_h|^2_h +|\nabla^h_{\bar\partial} T_h |\leq K.$$
\end{enumerate}
Here $K$ is some positive constant.

On the other hand, let $\omega_0$ be a real (1,1) form with corresponding Hermitian form $g_0$. We always assume the following:
\begin{enumerate}
\item[(a)] $g_0\le h$ and
$$|T_{g_0}|_h^2+|\nabla^h_{\bar\partial} T_{g_0}|_h+ |\nabla^{h}g_0|_h\leq K.$$

\item[(b)] There exist $f\in C^\infty(M)\cap L^\infty(M),\beta>0$ and $s>0$ so that  $$-\Ric(\theta_0)+e^{-s}(\omega_0+\Ric(\theta_0))+\ddb f\geq \beta \theta_0.$$

\end{enumerate}
Note that if $g_0\le Ch$, then we can replace $h$ by $Ch$, then (b) is still satisfied with a possibly smaller $\beta$.

Since $g_0$ can be degenerate, we perturb $g_0$ in the following way: Let $1\ge \eta\ge 0$ be a smooth function on $\R$ such that $\eta(s)=1$ for $s\le 1$ and $\eta(s)=0$ for $s\ge 2$ so that $|\eta'|+|\eta''|\le c_1$, say. For $\e>0$ and $\rho_0>>1$, let $\eta_{0}(x)=\eta(\rho(x)/\rho_0)$. Consider the metric:

\be
\gamma_0=\gamma_0(\rho_0,\e)=\eta_0\omega_0+(1-\eta_0)\theta_0+\e\theta_0.
\ee

 Then
\begin{itemize}
  \item $\gamma_0$ is the \K form of a complete Hermitian metric, which is uniformly equivalent to $h$;
       \item   $\mathrm{BK}(\gamma_0 )\ge -C$ for some $C$ which may depend on $\rho_0, \e$;
            \item The torsion $|T_{\gamma_0} |_{\gamma_0}+|\nabla^{\gamma_0}_{\bar \partial} T_{\gamma_0}|_{\gamma_0}$ is uniformly bounded by a constant which may depend on $\rho_0, \e$.
\end{itemize}

We will obtain a short time existence for the potential flow starting with $\gamma_0$:

 \begin{lma}\label{l-perturbed-1}  \eqref{e-MP-1} has a solution $u(t)$  on $M\times[0, s)$ with $\a=-\Ric(\theta_0)+e^{-t}\lf(\Ric(\theta_0)+\gamma_0\ri)$ and $\omega(t)=\a+\ii\ddbar u$  such that $\omega(t)$ satisfies \eqref{e-NKRF} with initial data $\gamma_0$, where $\omega(t)$ is the \K form of $g(t)$. Moreover, $g(t)$ is uniformly equivalent to $h$ on $M\times[0, s_1]$ for all $s_1<s$.
\end{lma}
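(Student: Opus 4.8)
The plan is to construct $u$ as a limit of solutions of Dirichlet problems for \eqref{e-MP-1} posed on a compact exhaustion of $M$, using the a priori estimates of this section to pass to the limit. We first record that $\gamma_0=\gamma_0(\rho_0,\e)$ is itself a complete Hermitian metric uniformly equivalent to $h$ whose bisectional curvature is bounded below and whose torsion and $\nabla^h_{\bar\partial}$ of the torsion are bounded (by constants depending on $\rho_0,\e$); thus $\gamma_0$ satisfies the structural hypotheses (1)--(3) and (a) of this section, with $K$ enlarged. Moreover, since $\theta_0\ge\omega_0$ forces $\gamma_0=\eta_0\omega_0+(1-\eta_0)\theta_0+\e\theta_0\ge\omega_0$, and since $\gamma_0\ge\e\theta_0>0$, condition (b) continues to hold for $\gamma_0$ with the same $s,\beta,f$. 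Writing $\a=-\Ric(\theta_0)+e^{-t}(\Ric(\theta_0)+\gamma_0)$ and $\mu(t)=\frac{1-e^{-t}}{1-e^{-s}}\in[0,1]$, one has the convex combination $\a(t)+\mu(t)\ddb f=\mu(t)\big(\a(s)+\ddb f\big)+(1-\mu(t))\gamma_0\ge \min(\beta,\e)\,\theta_0$ on $[0,s]$, which will be the source of all positivity lower bounds.

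\emph{Step 1: approximation.} Fix $s_1<s$ and a smooth exhaustion $\Omega_1\Subset\Omega_2\Subset\cdots$ of $M$ by relatively compact domains. On each $\Omega_k$ solve the Dirichlet problem for \eqref{e-MP-1} with the above $\a$: find $u_k$ smooth on $\overline{\Omega_k}\times[0,s_1]$, vanishing on the parabolic boundary, with $\a+\ii\ddbar u_k>0$ and $\p_t u_k=\log\big((\a+\ii\ddbar u_k)^n/\theta_0^n\big)-u_k$. Since $\a(0)=\gamma_0$ is a smooth positive $(1,1)$-form on $\overline{\Omega_k}$ and $\a(t)+\mu(t)\ddb f\ge\min(\beta,\e)\theta_0>0$ throughout $[0,s_1]$, the standard solvability theory for the Dirichlet problem of the parabolic complex Monge--Amp\`ere equation on a bounded domain (cf.\ \cite{Gill,TosattiWeinkove2015}, and \cite{Lee-Tam} for the Hermitian setting) provides such a $u_k$; here the a priori estimates of Step 2, valid on the domain of existence, together with the positivity above prevent breakdown before $t=s_1$.

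\emph{Step 2: a priori estimates uniform in $k$.} Using the maximum principle of Appendix~B together with cutoffs built from the exhaustion function $\rho$ (condition (1)) to make the arguments global and independent of $k$, establish: (i) $|u_k|\le Ct$ on $\overline{\Omega_k}\times[0,s_1]$, by comparison with barriers involving $\sup_M|f|$ and $\sup_M|\log(\gamma_0^n/\theta_0^n)|$, the latter finite because $\gamma_0$ is uniformly equivalent to $h$; (ii) a bound on $\dot u_k=\p_t u_k$, hence on $\log(\omega_k^n/\theta_0^n)=\dot u_k+u_k$, from the parabolic equation satisfied by $\dot u_k$; (iii) the second-order estimate $\tr_h\omega_k\le C$ on $\overline{\Omega_k}\times[0,s_1]$, by applying the maximum principle to a localization of $\log\tr_h\omega_k-Au_k$ with $A$ large, the terms lacking a good sign being absorbed via the lower bound $\mathrm{BK}_h\ge-K$, the torsion bounds (3) and (a), and (i)--(ii); (iv) the lower bound $\omega_k\ge c\,\theta_0$ on $\overline{\Omega_k}\times[0,s_1]$, which follows since $\omega_k\le Ch$ together with $\omega_k^n/\theta_0^n\ge c'>0$ forces every eigenvalue of $\omega_k$ relative to $h$ to be bounded below; and (v) interior higher-order estimates on $M\times[0,s_1]$ by local parabolic regularity (Evans--Krylov for the complex Monge--Amp\`ere operator followed by Schauder, or Calabi-type $C^3$ estimates and bootstrapping), valid down to $t=0$ because $\gamma_0$ is smooth with controlled geometry and $u_k(0)=0$.

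\emph{Step 3: limit, and the main difficulty.} By (i)--(v) and a diagonal argument, a subsequence of $(u_k)$ converges in $C^\infty_{loc}(M\times[0,s_1])$ to a smooth $u$ solving \eqref{e-MP-1} on $M\times[0,s_1]$ with $u(0)=0$; by (iii)--(iv) the form $\omega(t)=\a+\ii\ddbar u$ is uniformly equivalent to $h$ on $M\times[0,s_1]$, so $g(t)$ is a complete Hermitian metric there, and by construction $\omega(t)$ solves \eqref{e-NKRF} with $\omega(0)=\gamma_0$. Since $s_1<s$ was arbitrary and, by a standard maximum-principle argument, the solution is unique, these patch to a solution on $M\times[0,s)$. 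I expect the main obstacle to be the uniform second-order estimate (iii): because the curvature of $h$, and hence of $\gamma_0$, is only bounded from below, the Bochner-type computation for $\log\tr_h\omega_k$ produces torsion and curvature terms of indefinite sign that must be absorbed using precisely hypotheses (1)--(3) and (a); at the same time the whole estimate must be carried out with the global weight $\rho$ so that the resulting constant does not depend on the exhaustion, and one must verify that this two-sided comparison between $\omega_k$ and $h$ --- and all the other estimates --- remains uniform as $t\downarrow0$, which is exactly what yields $\omega(t)\to\gamma_0$ and the completeness of $g(t)$ on the closed interval.
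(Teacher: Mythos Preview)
Your approach is genuinely different from the paper's, and considerably more involved. The paper does not build the solution from scratch via a Dirichlet exhaustion; instead it observes that $\gamma_0$ is a complete Hermitian metric with the structural properties listed before the lemma and then simply invokes the existence theorem of \cite[Theorem~4.1]{Lee-Tam}. The only thing actually proved is the hypothesis needed there: for any $s_1<s$,
\[
-\Ric(\gamma_0)+e^{-s_1}\bigl(\gamma_0+\Ric(\gamma_0)\bigr)+\ii\ddbar f_1\ \ge\ \beta_1\gamma_0
\]
for some bounded $f_1$ and $\beta_1>0$. This is checked by writing $-\Ric(\gamma_0)\succeq -\Ric(\theta_0)$ (since $\gamma_0^n=e^H\theta_0^n$ with $H$ bounded), then using condition~(b) and $\gamma_0\ge\omega_0$ to get $-(1-e^{-s_1})\Ric(\theta_0)+e^{-s_1}\gamma_0\succeq\frac{1-e^{-s_1}}{1-e^{-s}}\beta\theta_0\ge\beta_1\gamma_0$. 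That is the entire argument.

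Your route---Dirichlet problems on $\Omega_k$, uniform $C^0$, $\dot u$, trace, and higher-order estimates, then a diagonal limit---is essentially a reproof of \cite[Theorem~4.1]{Lee-Tam} in this special case. It is in principle workable, and your observation that condition~(b) persists with $\omega_0$ replaced by $\gamma_0$ (since $\gamma_0\ge\omega_0$) is correct and is the same mechanism the paper exploits. What you gain is self-containment; what you lose is brevity, and you take on some nontrivial overhead you gloss over: short-time solvability of the \emph{Dirichlet} problem for the parabolic complex Monge--Amp\`ere equation in the Hermitian setting (the references you cite treat closed or complete noncompact manifolds, not boundary-value problems), and boundary/near-boundary estimates ensuring your interior bounds are genuinely independent of $k$. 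Also note a logical point: the a~priori estimates of this section are proved \emph{after} Lemma~\ref{l-perturbed-1} and \emph{for} its solution, so you cannot literally ``use the a~priori estimates of this section'' to prove it; you would have to rederive analogous estimates in the Dirichlet setting.
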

\begin{proof} By  the proof of \cite[Theorem 4.1]{Lee-Tam}, it is sufficient to prove that for any $0<s_1<s$,
$$
-\Ric(\gamma_0)+e^{-s_1}(\gamma_0+\Ric(\gamma_0))+\ii\ddbar f_1\ge \beta_1\gamma_0
$$
for some smooth bounded function $f_1$ and some   constant $\beta_1>0$. To simplify the notations, if $\eta, \zeta$ are real (1,1) forms, we write $\eta \succeq \zeta$ if $\eta+\ii\ddbar \phi\ge \zeta$ for some smooth and bounded function $\phi$. We compute:
\bee\begin{split}
-\Ric(\gamma_0)+e^{-s_1}(\gamma_0+\Ric(\gamma_0))
=&-(1-e^{-s_1})\Ric(\gamma_0)+e^{-s_1}\gamma_0\\
\succeq&-(1-e^{-s_1})\Ric(\theta_0)+e^{-s_1}\gamma_0\\
\succeq&\frac{1-e^{-s_1}}{1-e^{-s}}(\beta \theta_0-e^{-s}\omega_0)+e^{-s_1}\gamma_0  \\
\ge&\frac{1-e^{-s_1}}{1-e^{-s}} \beta \theta_0 \\
\ge& \beta_1\gamma_0\end{split}\eee
for some $\beta_1>0$ because $0<s_1<s$ and $\gamma_0\ge \omega_0$. Here we have used {condition (b) above}, the fact that $\gamma_0^n =\theta_0^ne^H$ for some smooth bounded function $H$ and the definition of Chern-Ricci curvature.

\end{proof}
Let $\omega(t)$ be the solution in the lemma and let $u(t)$ be the potential as in \eqref{e-potential}.
Since we want to prove that \eqref{e-MP-1} has a solution $u(t)$  on $M\times(0, s)$ with $\a=-\Ric(\theta_0)+e^{-t}\lf(\Ric(\theta_0)+\omega_0\ri)$ in next section, we need to obtain some uniform estimates of $u, \dot u$ and $\omega(t)$ which is independent of $\rho_0$ and $\e$. The estimates are more delicate because   the initial data $\omega_0$ maybe degenerate. For later applications,  we need to obtain estimates on $(0,1]$ and $[1,s)$ if $s>1$. Note that for fixed $\rho_0, \e$, $u(t)$ is smooth up to $t=0$. Moreover, $u, \dot u=:\frac{\p}{\p t}u$ are uniformly bounded on $M\times[0,s_1]$ for all $0<s_1<s$.

\subsection{a priori estimates for $u$ and $\dot u$}\label{ss-uudot}

We first give estimates for upper bound of $u$ and $\dot u$.

\begin{lma}\label{l-uudot-upper-1} There is a constant $C$ depending only on $n$ and $K$ such that
$$
 u\le C\min\{t,1\}, \ \  \dot u\le  \frac{Ct}{e^t-1}
$$
on $M\times[0, s)$, provided $0<\e<1$.
\end{lma}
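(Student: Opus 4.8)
The plan is to establish both bounds by the maximum principle of Appendix B, using that by Lemma \ref{l-perturbed-1} the metrics $g(t)$ with \K form $\omega(t)=\a+\ii\ddbar u$ are uniformly equivalent to $h$ on $M\times[0,s_1]$ for every $s_1<s$, and that $u,\dot u$ are bounded there; the non-compactness of $M$ and the possibly unbounded curvature of $\omega(t)$ will be handled using the exhaustion function $\rho$ of condition (1), with $|\partial\rho|^2_h+|\ddb\rho|_h\le K$, as a barrier. Throughout, $\Delta$ and $\tr$ denote $\Delta_{\omega(t)}$ and $\tr_{\omega(t)}$, and $F:=\log\big(\omega(t)^n/\theta_0^n\big)$, so \eqref{e-MP-1} reads $\dot u=F-u$. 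First I would record two pointwise facts used for the upper bounds: since the holomorphic bisectional curvature of $h$ satisfies $R(h)_{i\bar i j\bar j}\ge-K$ in every $h$-unitary frame, diagonalising $\Ric(\theta_0)$ in such a frame and summing over it gives $\Ric(\theta_0)\ge-nK\,\theta_0$; and since $g_0\le h$ and $0<\e<1$, one has $0<\gamma_0\le 2\theta_0$. Hence $\a=-(1-e^{-t})\Ric(\theta_0)+e^{-t}\gamma_0\le (nK+2)\,\theta_0$ on $M\times[0,s)$.

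For the bound on $u$, I would compare $u$ with $\psi(t):=C_0(1-e^{-t})$, the solution of $\psi'=C_0-\psi$ with $\psi(0)=0$, for a constant $C_0=C_0(n,K)$ of size comparable to $n\log(nK+2)$. Running the maximum principle on $w:=u-\psi$: at a point where $w$ is (almost) maximal in space one has $\ii\ddbar u\le0$, so $\omega=\a+\ii\ddbar u\le\a\le(nK+2)\theta_0$ there, hence $F\le C_0$, and therefore $\partial_t w=\dot u-\psi'=F-C_0-w\le-w$. Thus $w$ cannot have a positive maximum, so $u\le\psi(t)=C_0(1-e^{-t})\le C_0\min\{t,1\}$, using $1-e^{-t}\le t$ and $1-e^{-t}\le1$.

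For the bound on $\dot u$, I would set $Q:=(e^t-1)\dot u-u-nt$. Differentiating \eqref{e-MP-1} in $t$ gives $\ddot u=\Delta\dot u+\tr\dot\a-\dot u$ with $\dot\a=-e^{-t}(\Ric(\theta_0)+\gamma_0)$, and one has the identity $(e^t-1)\dot\a=\a-\gamma_0=\omega-\ii\ddbar u-\gamma_0$, which gives $(e^t-1)\tr\dot\a=n-\Delta u-\tr\gamma_0$. Substituting, the $\dot u$- and $\Delta u$-terms cancel and one is left with $\heat Q=-\tr\gamma_0\le0$, while $Q(\cdot,0)=0$; the maximum principle then yields $Q\le0$, i.e.\ $(e^t-1)\dot u\le u+nt\le(C_0+n)t$ by the first part. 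Hence $\dot u\le\frac{(C_0+n)t}{e^t-1}$ for $t>0$, while $\dot u(\cdot,0)=\log(\gamma_0^n/\theta_0^n)\le n\log 2$; so both estimates hold with $C:=C_0+n$, depending only on $n$ and $K$.

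The step I expect to be the main obstacle is making these two maximum-principle arguments rigorous on the non-compact manifold $(M,\omega(t))$, whose curvature may be unbounded: one cannot simply evaluate at a spatial maximum. This is precisely where the uniform equivalence of $g(t)$ with $h$ on $[0,s_1]$ (Lemma \ref{l-perturbed-1}) and the $h$-controlled exhaustion function $\rho$ (condition (1)) enter, through the maximum principle of Appendix B; it is also the reason one must take $C_0$ a little larger than $n\log(nK+2)$, so as to dominate the barrier contribution $\e'\,\ii\ddbar\rho\le\e'K\theta_0$. Since all constants are independent of $\rho_0$ and $\e$, these bounds are exactly what is needed to pass to the limit in the next section and solve \eqref{e-MP-1} with the degenerate initial datum $\omega_0$.
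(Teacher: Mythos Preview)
Your proof is correct and follows essentially the same approach as the paper: the key identity $\heat\big((e^t-1)\dot u-u-nt\big)=-\tr_\omega\gamma_0\le0$ together with the maximum principle (using the barrier $\kappa\rho$ and the uniform equivalence of $g(t)$ with $h$ from Lemma \ref{l-perturbed-1}) is exactly what the paper uses, and your bound $\a\le(nK+2)\theta_0$ coincides with the paper's $\Ric(\theta_0)\ge -L(n,K)\theta_0$. The only cosmetic difference is that you compare $u$ with $C_0(1-e^{-t})$, which gives $u\le C\min\{t,1\}$ in a single step, whereas the paper compares $u$ with the linear function $At$ to obtain $u\le At$, feeds this into $(e^t-1)\dot u\le nt+u$ to get the $\dot u$ bound, and then deduces $u\le C$ a posteriori; your ordering is slightly cleaner but the content is the same.
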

\begin{proof}The proofs here follow almost verbatim from the \K case \cite{TianZhang2006}, but we include brief arguments for the reader's convenience. For notational convenience, we use $\Delta=g^{i\bar j} \partial_i \partial_{\bar j}$ to denote the Chern Laplacian associated to $g(t)$.
Since $-\Ric(\theta_0)=\omega(t)-e^{-t}(\Ric(\theta_0)+\gamma_0)-\ii\ddbar u$
 by \eqref{e-potential-1}, we have
 \be\label{e-udot-1}
 \begin{split}
 \heat  (e^t\dot u)=&e^t\dot u-e^t \tr_{\omega}\Ric(\theta_0)-e^t\heat u-n e^t\\
 =&e^t\tr_\omega \lf(- \Ric(\theta_0)+\ii\ddbar u\ri)-ne^t\\
 =&e^t\tr_\omega\lf(\omega-e^{-t}(\Ric(\theta_0)+\gamma_0)\ri)-ne^t\\
 =&  -\tr_\omega (\Ric(\theta_0)+\gamma_0)\\
 =&\heat (\dot u+u)+n -\tr_\omega(\gamma_0).
 \end{split}
 \ee

Hence
\bee
\heat(\dot u+u+nt-e^t\dot u)=\tr_\omega\gamma_0\ge0.
\eee
At $t=0$, $\dot u+u+nt-e^t\dot u=0$. By maximum principle Lemma \ref{max}, we have
\be\label{e-uudot-upper-1}
(e^t-1)\dot u\le nt+u.
\ee

Next consider
\bee
F=u-At-\kappa\rho
\eee
on $M\times[0, s_1]$ for any fixed $s_1<s$. Here $\kappa>0$ is a constant.
Suppose $\sup\limits_{M\times[0, s_1]}F>0$, then there exists $(x_0, t_0)\in M\times(0, s_1]$ such that $F\leq F(x_0, t_0)$ on $M\times[0, s_1]$, and at this point,

\bee\begin{split}
0\leq& \dot u -A=\log \lf(\frac{\omega^n(t)}{\theta_0^n}\ri)-u-A. \end{split} \eee
Also, $\ii\ddbar u\le \kappa\ii\ddbar \rho\le \kappa K\theta_0$. Hence at $(x_0,t_0)$,
\bee
\begin{split}
\omega(t)=&-\Ric(\theta_0)+e^{-t}(\Ric(\theta_0)+\gamma_0)+\ii\ddbar u\\
\le&(-1+e^{-t})\Ric(\theta_0)+e^{-t}\gamma_0+\kappa K\theta_0\\
\le &(L+2+\kappa K)\theta_0,
\end{split}
\eee
here $\Ric(\theta_0)\ge -L(n, K)\theta_0$. Hence  at $(x_0,t_0)$ we have
\bee
\begin{split}
u\le & n\log(L+2+\kappa K)-A\\
\le &0
\end{split}
\eee
if $A=n\log(L+2)+1$ and $\kappa>0$ is small enough. Hence $F(x_0,t_0)<0$. This is a contradiction. Hence $F\le 0$ on $M\times[0, s_1]$ provided $A=A(n,K)=n\log(L+2)+1$ and we have
\be\label{e-uudot-upper-2}
u\le At
\ee
by letting $\kappa\to0$. Combining this with \eqref{e-uudot-upper-1}, we conclude that
$$
\dot u\le \frac{(A+n)t}{e^t-1}.
$$
Combining this with \eqref{e-uudot-upper-2}, we conclude that $u\le C$ for some constant $C$ depending only on $n, K$. Since $s_1$ is arbitrary, we complete the proof of Lemma \ref{l-uudot-upper-1}.
\end{proof}

Next, we will estimate the lower bound of $u$ and $\dot u$.
\begin{lma}\label{l-all-u}\begin{enumerate}
\item[(i)] $u(x,t)\geq - \frac{C}{1-e^{-s}} t+nt\log(1-e^{-t})$ on  $M\times[0, s)$ for some constant $C>0$ depending only on $ n, \beta, K, ||f||_\infty$.
              \item [(ii)] For $0<s_1\leq 1$ and $s_1<s$,
             \bee
             \dot u+u\ge\frac1{1-e^{s_1-s}}\lf(n\log t-\frac{C}{1-e^{-s}}\ri)
             \eee
             some constant $C>0$ depending only on $ n, \beta, K, ||f||_\infty$
             on $M\times(0, s_1]$.
             \item [(iii)]  For $0<s_1\leq 1$ and $s_1<s$,
 $$\dot u+u\geq -C$$
             on $M\times[0, s_1]$ for some constant $C>$ depending only on
             $ n, \beta$, $K, ||f||_\infty, s_1, s$ and $\e$.
             \item [(iv)] Suppose $s>1$, then for $1<s_1<s$,
             $$\dot u+u\ge  -\frac{C(1+s_1e^{s_1-s})}{1-e^{s_1-s}}$$ on $M\times[1,s_1]$
             for some constant $C(n, \beta, ||f||_\infty, K)>0$.
             \item[(v)] For $0<s_1<s$,
             $$u\ge -\frac{C(1+s_1e^{s_1-s})}{1-e^{s_1-s}}$$ on $M\times[0,s_1]$ for some constant $C(n, \beta, ||f||_\infty, K)>0$.
           \end{enumerate}

\end{lma}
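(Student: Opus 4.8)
The plan is to establish the five lower bounds by a combination of the maximum principle and the ODE comparison arguments already used for the upper bounds, exploiting condition (b) exactly as it was used in Lemma 2.1. The key structural identity is the one derived in \eqref{e-udot-1}, namely
\[
\heat(e^t\dot u)=-\tr_\omega\bigl(\Ric(\theta_0)+\gamma_0\bigr),
\]
together with the observation that condition (b), after the perturbation, gives a lower bound for $\Ric(\theta_0)+\gamma_0$ modulo a $\ii\ddbar$ of a bounded function. Concretely, from (b) we have $-\Ric(\theta_0)+e^{-s}(\omega_0+\Ric(\theta_0))+\ddb f\ge\beta\theta_0$, and since $\gamma_0\ge\omega_0+\e\theta_0\ge \omega_0$ and $\gamma_0\le(1+\e)h$, one gets a relation of the form $\Ric(\theta_0)+\gamma_0\succeq (1-e^{-s})^{-1}\beta\theta_0$ up to bounded factors; the precise constants will carry the dependence on $n,\beta,K,\|f\|_\infty$ claimed in the statement.

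For part (i), I would run the maximum principle on a quantity like $F=u-nt\log(1-e^{-t})+\tfrac{C}{1-e^{-s}}t+\kappa\rho$ (with the sign chosen so that we are bounding $u$ from below), using $\heat$ applied to $u$ via $\dot u=\log(\omega^n/\theta_0^n)-u$ and the elementary inequality $\log\det\le \tr - n$ to bound $\omega^n/\theta_0^n$ from below in terms of $\tr_\omega$ of the (perturbed) reference form; the term $nt\log(1-e^{-t})$ is exactly what absorbs the $t\to0$ singularity of $\log$ coming from the fact that $\gamma_0$ is only $\ge\e\theta_0$ on the support where $\omega_0$ degenerates. For part (ii), I would apply $\heat$ to $\dot u+u$: by \eqref{e-udot-1} one has $\heat(\dot u+u)=\tr_\omega\gamma_0-n+\tr_\omega(\Ric(\theta_0)+\gamma_0)$ type expression; bounding $\tr_\omega(\Ric(\theta_0)+\gamma_0)$ below using condition (b) and then comparing with the ODE $y'=(\text{positive coefficient})\,e^{-t}(\ldots)$ on $(0,s_1]$ with $s_1\le 1$ gives the $\frac{1}{1-e^{s_1-s}}(n\log t - C/(1-e^{-s}))$ form — the $\log t$ appearing again because near $t=0$ the best lower bound for $\omega^n/\theta_0^n$ degenerates like $t^n$ on the degeneracy set. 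Part (iii) is the $\e$-dependent refinement: here one no longer throws away the positivity of $\gamma_0\ge\e\theta_0$, so $\log(\omega^n/\theta_0^n)$ stays bounded below by a constant depending on $\e$, and a straightforward maximum principle with barrier $\kappa\rho$ removes the $\log t$ singularity; this is the estimate that is \emph{not} uniform in $\e$ and is only used as an auxiliary tool. Parts (iv) and (v) are then obtained by the same ODE comparison as (ii) but on the interval $[1,s_1]$ (resp. $[0,s_1]$) where $t$ is bounded away from $0$, so no $\log t$ term survives and one gets the clean bound $-C(1+s_1e^{s_1-s})/(1-e^{s_1-s})$; for (v) one combines the bound on $\dot u + u$ with integration, using $u(t)=e^{-t}\int_0^t e^\sigma(\dot u+u)(\sigma)\,d\sigma$ up to the contribution of $\dot u$, or more directly integrates $\frac{d}{dt}(e^t u)=e^t(\dot u + u)$.

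In all cases the maximum principle being invoked is Lemma \ref{max} from Appendix B, which applies because for fixed $\rho_0,\e$ the metric $g(t)$ is uniformly equivalent to the complete metric $h$ on $M\times[0,s_1]$ (Lemma \ref{l-perturbed-1}) and $u,\dot u$ are bounded there, so the barrier function $\kappa\rho$ together with $|\ddb\rho|_h\le K$ legitimately controls the behaviour at infinity; letting $\kappa\to 0$ at the end removes the barrier. The main obstacle, and the only genuinely delicate point, is keeping the constants in (i), (ii), (iv), (v) independent of $\rho_0$ and $\e$: this forces us to use only the $\e$-free lower bound $\gamma_0\ge\omega_0$ in condition (b), which in turn is precisely what produces the unavoidable $n\log t$ (resp. $nt\log(1-e^{-t})$) terms — so the estimates cannot be improved to bounded ones except at the cost of $\e$-dependence, which is exactly the content of part (iii). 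Getting the bookkeeping of the coefficients $\frac{1}{1-e^{s_1-s}}$ and $\frac{1+s_1e^{s_1-s}}{1-e^{s_1-s}}$ right in the ODE comparison — they blow up as $s_1\to s$, which is consistent with the flow only being controlled on $[0,s)$ — is the part that needs care but no new idea.
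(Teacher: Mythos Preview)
Your overall strategy is right --- maximum principle with the $\kappa\rho$ barrier, tracking which bounds are $\e$-free, deriving (v) by integrating $\frac{d}{dt}(e^tu)=e^t(\dot u+u)$ --- but the execution of parts (ii)--(iv) has a genuine gap.

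The issue is that $\heat(\dot u+u)=-\tr_\omega\Ric(\theta_0)-n$ (your displayed expression has a sign error: it should read $\tr_\omega\gamma_0-n-\tr_\omega(\Ric(\theta_0)+\gamma_0)$, which simplifies to this), and condition (b) does \emph{not} bound $-\Ric(\theta_0)$ from below by a positive form. What (b) gives is
\[
-(1-e^{-s})\Ric(\theta_0)\ge \beta\theta_0 - e^{-s}\omega_0 - \ii\ddbar f,
\]
with a bad $-e^{-s}\omega_0$ term. If you work with $\dot u+u$ alone you are stuck with a $-\tfrac{e^{-s}}{1-e^{-s}}\tr_\omega\omega_0$ on the right, which you cannot control without already knowing a lower bound on $\omega$. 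The paper's key trick is to use instead the combination
\[
(1-e^{t-s})\dot u+u=(\dot u+u)-e^{-s}(e^t\dot u),
\]
for which the two heat identities combine to give
\[
\heat\bigl((1-e^{t-s})\dot u+u\bigr)=\tr_\omega\bigl(-\Ric(\theta_0)+e^{-s}(\Ric(\theta_0)+\gamma_0)\bigr)-n\ge \beta\,\tr_\omega\theta_0-\Delta f-n,
\]
the $e^{-s}\gamma_0$ term exactly absorbing the $e^{-s}\omega_0$ via $\gamma_0\ge\omega_0$. This is also where the factor $\frac{1}{1-e^{s_1-s}}$ in (ii) and (iv) comes from: one bounds $(1-e^{t-s})(\dot u+u)+e^{t-s}u$ and then divides. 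After this, the $n\log t$ emerges from the maximum principle applied to $F=(1-e^{t-s})\dot u+u-f-n\log t+\kappa\rho$ together with AM--GM in the form $\tr_\omega\theta_0\ge n\exp\bigl(-\tfrac{1}{n}(\dot u+u)\bigr)$, not from an ODE comparison.

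For part (i) your barrier is also missing the term $-\frac{1-e^{-t}}{1-e^{-s}}f$; without it you cannot invoke condition (b) at the spatial minimum, since the $\ii\ddbar f$ has to come from the Hessian of $F$. And the inequality $\log\det\le\tr-n$ goes the wrong way for a lower bound on $\omega^n/\theta_0^n$; what is actually used is that at the minimum $\ii\ddbar u\ge\frac{1-e^{-t}}{1-e^{-s}}\ii\ddbar f-\kappa\ii\ddbar\rho$, hence $\omega\ge\frac12\frac{1-e^{-t}}{1-e^{-s}}\beta\theta_0$ directly by (b), and then one takes $\log$ of the determinant.
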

\begin{proof} In the following, $C_i$ will denote positive constants depending only on $n, \beta, ||f||_\infty, K$ and $D_i$ will denote positive constants which may also depend on $\rho_0, \e$ but not on $\kappa$.

  To prove (i): Consider \bee
F=u(x,t)-\frac{1-e^{-t}}{1-e^{-s}}f(x)+A\cdot t-nt\log(1-e^{-t})+\kappa\rho(x) .\eee
Suppose  $\inf\limits_{M\times[0, s_1]}F<0$.
Then there exists $(x_0, t_0)\in M\times(0, s_1]$ such that $F\geq F(x_0, t_0)$ on $M\times[0, s_1]$. At this point, we have
\bee\begin{split}
0\geq &\ppt F
\\=&\dot u+A-\frac{e^{-t}}{1-e^{-s}}f(x)-n\log(1-e^{-t})-\frac{nt}{e^t-1}.\\
=&\log\frac{(-\Ric(\theta_0)+e^{-t}(\Ric(\theta_0)+\gamma_0)+\ddb u)^n}{\theta_0^n}-u +A\\
&-n\log(1-e^{-t})-\frac{nt}{e^t-1}-\frac{e^{-t}}{1-e^{-s}}f\\
\geq& \log\frac{(-\Ric(\theta_0)+e^{-t}(\Ric(\theta_0)+\gamma_0)+ \frac{1-e^{-t}}{1-e^{-s}}\ddb f-\kappa\ddb\rho)^n}{\theta_0^n}\\
&-C(n, K)-\frac{e^{-t}}{1-e^{-s}}f+A-n\log(1-e^{-t})-\frac{nt}{e^t-1},\\
\end{split} \eee
where we have used the fact that $u\leq C(n, K)$, and $\ddb u\ge\frac{1-e^{-t}}{1-e^{-s}}\ddb f-\kappa\ddb \rho$. Note that
$$
-\Ric(\theta_0)\ge\frac1{1-e^{-s}}\lf(\beta\theta_0-e^{-s}\omega_0-\ii\ddbar f\ri),
$$
hence
\bee
\begin{split}
&-\Ric(\theta_0)+e^{-t}(\Ric(\theta_0)+\gamma_0)+\frac{1-e^{-t}}{1-e^{-s}}\ddb f-\kappa\ddb\rho\\
\ge&e^{-t}\gamma_0+\frac{1-e^{-t}}{1-e^{-s}}\lf(\beta\theta_0-e^{-s}\omega_0 \ri) -\kappa K\theta_0 \\
\ge& \frac{1}{2}\frac{1-e^{-t}}{1-e^{-s}} \beta\theta_0
\end{split}
\eee
if $\kappa $ is small enough. Here we have used the fact that $0<t<s$ and $\gamma_0\ge \omega_0$.  Hence at $(x_0,t_0)$,
\bee
\begin{split}
0\geq& n\log(1-e^{-t}) -C_1 \\
&-\frac{e^{-t}}{1-e^{-s}}f+A-n\log(1-e^{-t})-\frac{nt}{e^t-1}\\
 \geq& -\frac{1}{1-e^{-s}}||f||_\infty+A-C_2 \\
 >&0
\end{split}
\eee
if $A=\frac{1}{1-e^{-s}}||f||_\infty+C_2+1$. Hence for such $A$, $F\ge 0$ and for all $\kappa>0$ small enough, we conclude that
$$
u(x,t)\ge -At+nt\log(1-e^{-t}).
$$

To prove (ii),  we have

\bee
\heat(\dot u+u)=-\tr_\omega(\Ric(\theta_0))-n.
\eee
On the other hand, by \eqref{e-udot-1}, we also have
\bee
\heat (e^t\dot u)=-\tr_\omega(\Ric(\theta_0)+\gamma_0).
\eee
Hence
\be\label{e-udot-2}
\begin{split}
& \heat\lf((1-e^{t-s})\dot u+u\ri)\\
=&\tr_\omega(-\Ric(\theta_0)+e^{-s}(\Ric(\theta_0)+\gamma_0))-n\\
\ge&\beta\tr_\omega(\theta_0)-\Delta f-n.
\end{split}
\ee

Let $F=(1-e^{t-s})\dot u+u-f-A\log t+\kappa\rho$, where $A>0$ is a constant to be determined. Since $\log t\to-\infty$ as $t\rightarrow 0$, we conclude that for $0<s_1<s$, if $\inf_{M\times[0,s_1]}F\le 0$, then there is $(x_0,t_0)\in M\times(0,s_1]$ so that
$F(x_0,t_0)=\inf_{M\times[0,s_1]}F$. By \eqref{e-udot-2}, at $(x_0,t_0)$ we have
\bee
\begin{split}
0\ge&\heat F\\
\ge&\beta\tr_\omega(\theta_0)-n-\frac At-\kappa D_1\\
\ge& n\beta\exp(-\frac1n(\dot u+u))-n-\frac At-\kappa D_1
\end{split}
\eee
where $D_1>0$ is a constant independent of $\kappa$.  Hence at this point,
$$
\dot u+u\ge -n\log\lf(\frac1{n\beta}(n+\frac At+\kappa D_1)\ri).
$$

Hence at $(x_0,t_0)$, noting that $0<t_0\le s_1<s$ and $s_1\le 1$,
\bee
\begin{split}
F\geq& (1-e^{t-s})(\dot u+u)+e^{t-s}u-f-A\log t\\
\ge&-(1-e^{t-s})n\log\lf(\frac1{n\beta}(n+\frac At+\kappa D_1)\ri)-\sup_M f-A\log t\\
&- \frac{C_3}{1-e^{-s}}+nt\log(1-e^{-t})\\
\ge&[(1-e^{t-s})n-A]\log t-(1-e^{t-s})n\log\lf(\frac1{n\beta}(nt+A+\kappa t D_1)\ri)\\
&-||f||_\infty-\frac{C_4}{1-e^{-s}} \\
\ge &- n\log\lf(\frac1{n\beta}(2n+\kappa  D_1)\ri)-||f||_\infty-\frac{C_4}{1-e^{-s}}\end{split}
\eee
if $A=n$. Here we may assume that $\beta>0$ is small enough so that $2/\beta>1$.  Hence we have
\begin{align*}
F\ge - n\log\lf(\frac1{n\beta}(2n+\kappa  D_1)\ri)-||f||_\infty-\frac{C_4}{1-e^{-s}}.
\end{align*}
on $M\times(0,s_1]$. Let $\kappa\to0$, we conclude that
\bee
\begin{split}
(1-e^{t-s})\lf(\dot u+u\ri)=& (1-e^{t-s})\dot u+u-e^{t-s}u \\
\ge&n\log t-\frac{C_5}{1-e^{-s}},
\end{split}
\eee
where we have used the upper bound of $u$ in Lemma \ref{l-uudot-upper-1}. From this (ii) follows because $t\le s_1$.

 The proof of (iii) is similar to the proof of (ii) by letting $A=0$. Note that in this case, the infimum of $F$ may be attained at $t=0$ which depends also on $\e$.

 To prove (iv), let
 $F$ as in the proof of (ii) with $A=0$. Suppose $\inf_{M\times[\frac 12,s_1]}F=\inf_{M\times\{\frac 12\} }F$, then by (i) and (ii), we have
 $$
 F\ge -C_6.
 $$
 Suppose $\inf_{M\times[\frac 12,s_1]}F<\inf_{M\times\{\frac 12\}}F$, then we can find $(x_0,t_0)\in M\times(\frac 12,s_1]$ such that $F(x_0,t_0)$ attains the infimum. As in the proof of (ii), at this point,
 \bee
 \begin{split}
 \dot u+u\ge-n\log\lf(\frac1{n\beta}(n+\kappa D_2)\ri)
 \end{split}
 \eee
where $D_2>0$ is a constant independent of $\kappa$.  Hence as in the proof of (ii),
\bee
\begin{split}
F(x_0,t_0)\ge &(1-e^{t_0-s})(\dot u+u)+e^{t_0-s}u-f\\
\geq&-n(1-e^{t_0-s}) \log\lf(\frac1{n\beta}(n+\kappa D_2)\ri)-  \frac{C_7s_1e^{s_1-s}}{1-e^{-s}}  -C_8\\
\ge&-n  \log\lf(\frac1{n\beta}(n+\kappa D_2)\ri)-  \frac{C_7s_1e^{s_1-s}}{1-e^{-s}}  -C_8
\end{split}
\eee
because $t_0\le s_1$, where we have used (i) and we may assume that $\beta<1$.
Let $\kappa\to0$, we conclude that on $M\times[\frac 12, s_1]$,
\bee\begin{split}
&(1-e^{t-s})(\dot u+u)+e^{t-s}u-f\ge n  \log\beta-  \frac{C_7s_1e^{s_1-s}}{1-e^{-s}}  -C_8.\end{split} \eee

By Lemma \ref{l-uudot-upper-1}, we have
\bee
\dot u+u\ge -\frac{C_9(1+s_1e^{s_1-s})}{1-e^{s_1-s}}
\eee on $M\times[\frac 12,s_1]$ for some constant because $s>1$.

Finally, (v) follows from (i), Lemma \ref{l-uudot-upper-1} and (iv) by integration.

\end{proof}

\subsection{a priori estimates for $\omega(t)$}\label{ss-trace}

Next we will estimate the uniform upper bound of $g(t)$. Before we do this, we first give uniform estimates for the evolution of the key quantity $\log \tr_hg(t)$.

Let $\hat T$ and $T_0$ be the torsions of $h, \gamma_0$ respectively. Note that $\gamma_0$ depends on $\rho_0, \e$. Let $\hat\nabla$ be the Chern connection of $h$. Recall that $T_{ij\bar l}=\p_ig_{j\bar l}-\p_j g_{i\bar l}$ etc.

Let $\wt g$ be such that $g(t)=e^{-t}\wt g(e^t-1)$. Let  $s=e^t-1$. Then
\bee
\begin{split}
-\Ric (\wt g(s))-g(t)=&-\Ric (g(t))-g(t)\\
=&\frac{\p}{\p t}g(t)\\
=&-e^{-t}\wt g(e^t-1)+\frac{\p}{\p s}\wt g(s)\\
=&-g(t)+\frac{\p }{\p s}\wt g(s).
\end{split}
\eee
So
\bee
\frac{\p }{\p s}\wt g(s)=-\Ric(\wt g(s))
\eee
and $\wt g(0)=\gamma_0$.

Let $\Upsilon(t)=\tr_{h}g(t)$ and $\wt\Upsilon(s)=\tr_{h}\wt g(s)$.
By Lemma \ref{l-a-1}, we have
\bee
  \sheat \log \wt\Upsilon=\mathrm{I+II+III}
  \eee
  where
  \bee
\begin{split}
\mathrm{I}\le &2\wt\Upsilon^{-2}\text{\bf Re}\lf( h^{i\bar l} \wt g^{k\bar q} (T_0)_{ki\bar l}\hat \nabla_{\bar q}\wt\Upsilon\ri).
\end{split}
\eee
\bee
\begin{split}
\mathrm{II}=&\wt\Upsilon^{-1} \wt g^\ijb  h^{k\bar l}\wt g_{k\bar q} \lf(\hat \nabla_i \ol{(\hat T)_{jl}^p}-   h^{p\bar q}\hat R_{i\bar lp\bar j}\ri)\\
\end{split}
\eee
and
  \bee
\begin{split}
\mathrm{III}=&-\wt\Upsilon^{-1} \wt g^{\ijb}  h^{k\bar l}\lf(\hat \nabla_i\lf(\ol{( T_0)_{jl\bar k} } \ri) +\hat \nabla_{\bar l}\lf(  (  T_0)_{ik\bar j}  \ri)-\ol{ (\hat T)_{jl}^q}(  T_0)_{ik\bar q}   \ri)
\end{split}
\eee

Now
$$
\wt \Upsilon(s)=e^t\Upsilon(t).
$$
So
\bee
\sheat \log\wt\Upsilon(s)=e^{-t}\lf(\heat  \log\Upsilon+1\ri)
\eee
\bee
\begin{split}
\mathrm{I}\le &2e^{- 2t}\Upsilon^{-2}\text{\bf Re}\lf( h^{i\bar l}   g^{k\bar q} (T_0)_{ki\bar l}\hat \nabla_{\bar q} \Upsilon\ri).
\end{split}
\eee
\bee
 \begin{split}
\mathrm{II}=&e^{-t}\Upsilon^{-1}   g^\ijb  h^{k\bar l}  g_{k\bar q} \lf(\hat \nabla_i \ol{(\hat T)_{jl}^q}-   h^{p\bar q}\hat R_{i\bar lp\bar j}\ri)\\
\end{split}
\eee
and
  \bee
\begin{split}
\mathrm{III}=&-e^{-2t}\Upsilon^{-1}  g^{\ijb}  h^{k\bar l}\lf(\hat \nabla_i\lf(\ol{( T_0)_{jl\bar k} } \ri) +\hat \nabla_{\bar l}\lf(  (  T_0)_{ik\bar j}  \ri)-\ol{ (\hat T)_{jl}^q}(  T_0)_{ik\bar q}   \ri)
\end{split}
\eee

Hence
\be\label{e-logY}
\heat\log \Upsilon=\mathrm{I}'+\mathrm{II}'+\mathrm{III}'-1
\ee
where
\bee
\begin{split}
\mathrm{I}'\le &2e^{-t}\Upsilon^{-2}\text{\bf Re}\lf( h^{i\bar l}   g^{k\bar q} (T_0)_{ki\bar l}\hat \nabla_{\bar q} \Upsilon\ri).
\end{split}
\eee
\bee
\begin{split}
\mathrm{II}'=& \Upsilon^{-1}   g^\ijb  h^{k\bar l}  g_{k\bar q} \lf(\hat \nabla_i \ol{(\hat T)_{jl}^q}-   h^{p\bar q}\hat R_{i\bar lp\bar j}\ri)\\
\end{split}
\eee
and
  \bee
\begin{split}
\mathrm{III}'=&-e^{-t}\Upsilon^{-1}  g^{\ijb}  h^{k\bar l}\lf(\hat \nabla_i\lf(\ol{( T_0)_{jl\bar k} } \ri) +\hat \nabla_{\bar l}\lf(  (  T_0)_{ik\bar j}  \ri)-\ol{ (\hat T)_{jl}^q}(  T_0)_{ik\bar q}   \ri)
\end{split}
\eee

Now we want to estimate the terms in the above differential inequality.

\underline{\it Estimate of $\mathrm{II}'$}

Choose an frame unitary with respect to $h$ so that $g_{\ijb}=\lambda_i\delta_{ij}$. Then
\be\label{e-logY-1}
\begin{split}
\mathrm{II}'=& (\sum_l\lambda_l)^{-1}\lambda_i^{-1}\lambda_k\lf(\hat\nabla_i\ol{(\hat T)_{ik}^k}-\hat R_{i\bar kk\bar i}\ri)\\
\le &C(n,K)\tr_{g}h.
\end{split}
\ee

 \underline{\it Estimate of $\mathrm{III}'$}

Next, we compute the torsion of $\gamma_0$, $T_0=T_{\gamma_0}$, where $\gamma_0=\eta(\frac{\rho(x)}{\rho_0})g_0+(1-\eta(\frac{\rho(x)}{\rho_0}))h+\e h$:\bee\begin{split}
(T_0)_{ik\bar q}=&\p_i(\gamma_0)_{k\bar q}-\p_k(\gamma_0)_{i\bar q}\\
=&\eta'\frac{1}{\rho_0}[\rho_i(x)(g_0)_{k\bar q}-\rho_k(x)(g_0)_{i\bar q}]+\eta[\p_i(g_0)_{k\bar q}-\p_k(g_0)_{i\bar q}]\\
&+(1-\eta+\e)[\p_ih_{k\bar q}-\p_kh_{i\bar q}]-\eta'\frac{1}{\rho_0}[\rho_ih_{k\bar q}-\rho_kh_{i\bar q}]. \end{split} \eee

By the assumptions, all terms above are bounded by $C(n, K)$ for all $\rho_0\geq 1$ and for all $\e\leq 1$.

It remains to control $\hat \nabla_{\bar l}\lf(  (  T(\gamma_0))_{ik\bar j}  \ri)$.  We may compute $\hat \nabla_{\bar l}\lf(  (  T(\gamma_0))_{ik\bar j}  \ri)$ directly. \bee\begin{split}
& \hat \nabla_{\bar l}\lf(  (  T(\gamma_0))_{ik\bar j}  \ri)\\=&\hat \nabla_{\bar l}(\p_i(\gamma_0)_{k\bar j}-\p_k(\gamma_0)_{i\bar j})\\
=&\hat \nabla_{\bar l}\{\eta'\frac{1}{\rho_0}[\rho_i(x)(g_0)_{k\bar j}-\rho_k(x)(g_0)_{i\bar j}]+\eta[\p_i(g_0)_{k\bar j}-\p_k(g_0)_{i\bar j}]\\
&+(1-\eta+\e)[\p_ih_{k\bar j}-\p_kh_{i\bar j}]-\eta'\frac{1}{\rho_0}[\rho_ih_{k\bar q}-\rho_kh_{i\bar q}]\}\\
=&\eta''\rho_{\bar l}\frac{1}{\rho^2_0}[\rho_i(g_0)_{k\bar j}-\rho_k(g_0)_{i\bar j}]+\eta'\frac{1}{\rho_0}[\rho_{i\bar l}(g_0)_{k\bar j}-\rho_{k\bar l}(g_0)_{i\bar j}]\\
&+\eta'\frac{1}{\rho_0}[\rho_i\hat \nabla_{\bar l}(g_0)_{k\bar j}-\rho_k\hat \nabla_{\bar l}(g_0)_{i\bar j}]+\eta_{\bar l}[\p_i(g_0)_{k\bar j}-\p_k(g_0)_{i\bar j}]\\
&+\eta \hat\nabla_{\bar l} T(g_0)_{ik\bar q}+(1-\eta+\e)\hat \nabla_{\bar l} T(h)_{ik\bar j}-\eta'\frac{\rho_{\bar l}}{\rho_0}T(h)_{ik\bar j}\\
&-\eta'\frac{1}{\rho_0}[\rho_{i\bar l}h_{k\bar q}-\rho_{k\bar l}h_{i\bar q}]-\eta''\frac{1}{\rho^2_0}[\rho_{\bar l}\rho_{i}h_{k\bar q}-\rho_{\bar l}\rho_{k}h_{i\bar q}]. \end{split} \eee

Since we can control every term of the above equation by $C(n, K)$. Therefore, $|\hat \nabla_{\bar l}\lf(  (  T(\gamma_0))_{ik\bar j}  \ri)|\leq C(n, K)$.

Therefore, if $0<\e<1,\rho_0>1$
\be\label{e-logY-2}
\mathrm{III}'\leq C(n, K)\cdot e^{-t}\Upsilon^{-1} \Lambda.
 \ee
 where $\Lambda=\tr_{g}h$.

Now we will prove the uniform upper bound of $g(t)$.

 \begin{lma}\label{l-trace-2}
\begin{enumerate}
 \item [(i)] For $0<s_1<s$,
  $$
 \tr_{h}g(x,t)\le \exp\lf(\frac{C(E-\log(1-e^{-s}))}{1-e^{-t}}\ri)
  $$
  on $M\times(0,s_1]$ for some   constant $C>0$ depending only on $n,K, \beta, ||f||_\infty$ provided  such that if $0<\e<1$, $\rho_0>1$,
  where
  $$
  E=\frac{(1+s_1e^{s_1-s})}{(1-e^{-s})(1-e^{s_1-s})}.
  $$
  \item [(ii)] For $0<s_1<s$, there is  a constant $C$  depending only on $n,K, \beta, ||f||_\infty,   s, s_1$ and also on $\e$,  but independent of $\rho_0$ such that
$$
\tr_{h}g\le C
$$
on $M\times[0, s_1]$.
\end{enumerate}

\end{lma}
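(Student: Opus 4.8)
The plan is to run the standard but --- because of the Hermitian torsion and the possible degeneracy --- delicate second order estimate for the parabolic complex Monge--Amp\`ere flow, working with $\Upsilon=\tr_h g(t)$ as in \eqref{e-logY} and with the companion potential $\mathcal U:=(1-e^{t-s})\dot u+u-f$. Since only an upper bound is wanted one may restrict to the set $\{\Upsilon\ge 1\}$ (where $\Upsilon^{-1}\le 1$, and off which $\log\Upsilon<0$). There \eqref{e-logY}, \eqref{e-logY-1} and \eqref{e-logY-2} give $\heat\log\Upsilon\le \mathrm{I}'+C_0\tr_g h-1$ with $C_0=C_0(n,K)$, where $\mathrm{I}'$ is the torsion--gradient term; using the good negative gradient term inherent in the Chern--Lu identity of Lemma \ref{l-a-1}, Young's inequality, and the bounds on $T_h,\nabla^h_{\bar\partial}T_h,T_{\gamma_0},\nabla^h_{\bar\partial}T_{\gamma_0}$, one absorbs $\mathrm{I}'$ and is left with $\heat\log\Upsilon\le C_0\tr_g h-1$ on $\{\Upsilon\ge 1\}$. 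On the other hand \eqref{e-udot-2} together with $\heat f=-\Delta f$ gives $\heat\mathcal U\ge\beta\tr_g h-n$, so setting $V:=\log\Upsilon-\tfrac{C_0+1}{\beta}\mathcal U$ yields $\heat V\le -\tr_g h-1+\tfrac{(C_0+1)n}{\beta}$ on $\{\Upsilon\ge 1\}$.

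For (i) I would apply the maximum principle of Lemma \ref{max} to $Q:=(1-e^{-t})V-\kappa\rho$ on $M\times[0,s_1]$ with $\kappa>0$ small. By Lemma \ref{l-perturbed-1} the perturbed flow is smooth up to $t=0$ with $\Upsilon(\cdot,0)=\tr_h\gamma_0\le(1+\e)n$ and $u,\dot u$ bounded there, so $(1-e^{-t})V\to 0$ as $t\to 0$, while $-\kappa\rho\to-\infty$ at infinity; hence either $\sup_{M\times[0,s_1]}Q\le 0$ (done), or the supremum is attained at some $(x_0,t_0)\in M\times(0,s_1]$ where $\heat Q\ge 0$. Expanding $\heat Q=e^{-t}V+(1-e^{-t})\heat V-\kappa\heat\rho$ and absorbing $-\kappa\heat\rho\le C\kappa\tr_g h$ into $-(1-e^{-t})\tr_g h$ (for $\kappa$ small, the case $t_0$ near $0$ being standard), $\heat Q\ge 0$ forces $\tr_g h\le\frac{C\,(V(x_0,t_0)+1)}{1-e^{-t_0}}$. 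Feeding in the elementary inequality $\log\Upsilon\le (n-1)\log(\tr_g h)+(\dot u+u)+\log n$ --- which follows from $\tr_h g\le n\,(\tr_g h)^{n-1}\,\omega^n/\theta_0^n$ --- and the $\e,\rho_0$-\emph{independent} lower bounds for $\dot u+u$ and $u$ of Lemma \ref{l-all-u}(ii),(iv),(v), a short dichotomy according to whether $\tr_g h$ is $\le$ or $>$ a suitable multiple of $\frac{1-\log t_0}{e^{t_0}-1}$ shows that, in either case, $Q(x_0,t_0)$ is bounded by a constant depending only on $n,K,\beta,||f||_\infty,s,s_1$ (using $(1-e^{-t})(-\log t)\le c_*$ and $-\log t\le c_*(1-e^{-t})^{-1}$ on $(0,1]$). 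Letting $\kappa\to 0$ gives $(1-e^{-t})V\le C$; since $(1-e^{-t})\mathcal U\le C(1-e^{-t})$ by Lemmas \ref{l-uudot-upper-1} and \ref{l-all-u}, we get $(1-e^{-t})\log\Upsilon\le C$, and bookkeeping the constants of Lemma \ref{l-all-u} produces the asserted bound $\tr_h g\le\exp\!\big(C(E-\log(1-e^{-s}))/(1-e^{-t})\big)$.

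For (ii) the weight $(1-e^{-t})$ is unnecessary: by Lemma \ref{l-all-u}(iii),(v) the lower bounds on $\dot u+u$ and $u$ are now uniform down to $t=0$ (they may depend on $\e$ but \emph{not} on $\rho_0$), so $\mathcal U\ge -C(\e,\dots)$ on all of $M\times[0,s_1]$. Applying the maximum principle to $\log\Upsilon-\tfrac{C_0+1}{\beta}\mathcal U-\kappa\rho$ on $M\times[0,s_1]$, at an interior maximum the maximum principle forces $\tr_g h\le C$, hence $\log\Upsilon\le C(\e,\dots)$ there by the elementary inequality above, while at $t=0$ one has $\Upsilon(\cdot,0)=\tr_h\gamma_0\le(1+\e)n$, which is $\rho_0$-independent. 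Letting $\kappa\to 0$ gives $\tr_h g\le C(n,K,\beta,||f||_\infty,s,s_1,\e)$ on $M\times[0,s_1]$, uniformly in $\rho_0$.

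I expect the principal difficulty to be the behaviour as $t\to 0$: the estimate (i) has to blow up \emph{exactly} like $(1-e^{-t})^{-1}$ with constants independent of $\e$ and $\rho_0$, since it is this estimate that must survive the limit $\e\to 0,\ \rho_0\to\infty$ in the next section; this is what dictates the weight $(1-e^{-t})$ in $Q$ and the careful balancing above, in which the sharp, $\e,\rho_0$-uniform lower bounds of Lemma \ref{l-all-u} are indispensable. A secondary, more technical point is the control of the torsion--gradient term $\mathrm{I}'$ in \eqref{e-logY}: unlike the K\"ahler case it has to be matched against the good gradient term of Lemma \ref{l-a-1}, which is why the first covariant derivatives of the torsions of $h$ and $\gamma_0$ appear among the hypotheses.
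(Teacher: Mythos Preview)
Your outline has a genuine gap in the handling of the torsion--gradient term $\mathrm{I}'$. You write that ``using the good negative gradient term inherent in the Chern--Lu identity of Lemma~\ref{l-a-1}, Young's inequality, \ldots\ one absorbs $\mathrm{I}'$ and is left with $\heat\log\Upsilon\le C_0\tr_g h-1$''. But Lemma~\ref{l-a-1} is stated \emph{after} that good term has already been spent: the inequality $\mathrm{I}\le 2\Upsilon^{-2}\text{\bf Re}\big(h^{i\bar l}g^{k\bar q}(T_0)_{ki\bar l}\hat\nabla_{\bar q}\Upsilon\big)$ is precisely what survives once the positive squared term in the full Chern--Lu computation has been matched (via Cauchy--Schwarz) against the torsion cross--terms, and in the decomposition \eqref{e-logY} there is no remaining $-|\nabla\log\Upsilon|^2_g$ to play with. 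A direct estimate gives only $\mathrm{I}'\le C\Upsilon^{-1}\Lambda^{1/2}|\nabla\log\Upsilon|_g$, which cannot be bounded by $C\Lambda$ without a companion gradient term; so the differential inequality you start from does not follow from \eqref{e-logY}--\eqref{e-logY-2}.

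The paper resolves this by a different, and essential, mechanism. It works with the test function
\[
F=(1-\lambda e^{-t})\log\Upsilon-Av+\tfrac{1}{v}-\kappa\rho+Bt\log(1-\lambda e^{-t}),
\]
where $v=u-\tfrac{1-e^{-t}}{1-e^{-s}}f+\text{const}$ is built from $u$ alone (not $\dot u$) and shifted so that $1\le v\le C_5E$. At an interior maximum the first--order condition $(1-\lambda e^{-t})\Upsilon^{-1}\hat\nabla\Upsilon=(v^{-2}+A)\hat\nabla v+\kappa\hat\nabla\rho$ converts $\mathrm{I}'$ into a term in $|\nabla v|$, which is then absorbed by the good term $-2v^{-3}|\nabla v|^2$ produced by $\tfrac{1}{v}$. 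This is the standard Tosatti--Weinkove device for Hermitian trace estimates; your test function $Q=(1-e^{-t})(\log\Upsilon-A\mathcal U)-\kappa\rho$ contains no analogue of it. Note also that the paper's choice of $v$ (based on $u$, which by Lemma~\ref{l-all-u}(i),(v) is two--sidedly bounded independently of $\e,\rho_0$) rather than your $\mathcal U=(1-e^{t-s})\dot u+u-f$ (which by Lemma~\ref{l-all-u}(ii) blows up like $n\log t$ from below) is what keeps $v\in[1,C_5E]$; this bound is used repeatedly to control the error terms generated by the $\tfrac{1}{v}$ trick and is exactly where the constant $E$ enters the final estimate. The distinction $\lambda=1$ versus $\lambda=0$ then separates parts (i) and (ii), with the $\e e^{-t}$ piece of $\heat v$ doing the work in (ii).
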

\begin{proof} In the following, $C_i$ will denote constants depending only on $n,K, \b$ and $||f||_\infty$, but not $\rho_0$ and $\e$. $D_i$ will denote constants which may also depend on $\e, \rho_0$, but not $\kappa$. We always assume $0<\e<1<\rho_0$.

 Let $v(x,t)\ge1$ be a smooth bounded function.
As before, let  $\Upsilon=\tr_{h}g$ and $\Lambda=\tr_gh$ and let $\lambda=0$ or 1. For $\kappa>0$, consider the function
$$F=(1-\lambda e^{-t})\log \Upsilon-Av+\frac 1v-\kappa\rho+Bt\log (1-\lambda e^{-t})
 $$
 on $M\times[0, s_1]$, where $A, B>0$ are constants  to be chosen.  We want to estimate $F$ from above.
 Let
 $$
  \mathfrak{M}=\sup_{M\times[0,s_1]}F.
  $$
  Either (i) $\mathfrak{M}\le 0$; (ii) $\mathfrak{M}=\sup_{M\times\{0\}}F$; or (iii) there is $(x_0,t_0)$ with $t_0>0$ such that $F(x_0,t_0)=\mathfrak{M}$.
If (ii) is true, then
 \be\label{e-tr-1}
 \mathfrak{M}\le C_1(n).
 \ee
 because $g(0)=\gamma_0\le (1+\e)h$.

 Suppose (iii)  is true. If at this point $\Upsilon(x_0,t_0)\le 1$. Then \eqref{e-tr-1} is true with a possibly larger $C_1$. So let us assume that $\Upsilon(x_0,t_0)>1$. By \eqref{e-logY}, \eqref{e-logY-1} and \eqref{e-logY-2}, at $(x_0,t_0)$ we have:
\bee
\begin{split}
0\le&\heat F\\
=&(1-\lambda e^{-t})\heat\log \Upsilon+\lambda e^{-t}\log \Upsilon-(\frac{1}{v^2}+A)\heat v\\
&-\frac{2}{v^3}|\n v|^2+\kappa \Delta \rho+B\lf(\log(1-\lambda e^{-t})+\frac{\lambda t}{e^t-\lambda}\ri)\\
\le &(1-\lambda e^{-t})C_2\Lambda \lf( 1 +e^{-t}\Upsilon^{-1} \ri)\\& +
2(1-\lambda e^{-t})e^{-t}\Upsilon^{-2}\text{\bf Re}\lf( h^{i\bar l}   g^{k\bar q} (T_0)_{ki\bar l}\hat \nabla_{\bar q} \Upsilon\ri)\\
&+\lambda e^{-t}\log \Upsilon- (\frac 1{v^2}+A)\heat v -\frac{2|\nabla v|^2}{v^3}\\
&+B\lf(\log(1-\lambda e^{-t})+\frac{\lambda t}{e^t-\lambda}\ri)+\kappa D_1.
\end{split}
\eee
 At $(x_0,t_0)$, we also have:
  $$(1-\lambda e^{-t}) \Upsilon^{-1}\hat \nabla \Upsilon-(\frac 1{v^2}+A)\hat\nabla v- \kappa\hat  \nabla \rho=0.$$

Hence
 \bee\begin{split}
 &2(1-\lambda e^{-t})e^{-t} \Upsilon^{-2}\text{\bf Re}\lf( h^{i\bar l}   g^{k\bar q} (T_0)_{ki\bar l}\hat \nabla_{\bar q} \Upsilon\ri)\\
=& \frac{2e^{-t}}{\Upsilon}\text{\bf Re}\lf( h^{i\bar l}   g^{k\bar q} (T_0)_{ki\bar l}((\frac 1{v^2}+A)\hat\nabla_{\bar q} v- \kappa\hat  \nabla_{\bar q} \rho)\ri) \\
\leq&\frac{1}{v^3}|\nabla v|^2+\frac{C_3(A+1+\frac{1}{v^2})^2\cdot v^3 \Lambda}{\Upsilon^2}+\kappa D_2\\
\end{split}\eee

Using the fact that $\Upsilon(x_0,t_0)>1$, we have at $(x_0,t_0)$:

Hence \be\label{e-g-1}\begin{split}
0\le& C_2(1-\lambda e^{-t})\Lambda  +
\frac{C_3(A+\frac{1}{v^2})^2\cdot v^3 \Lambda}{\Upsilon} +\lambda e^{-t}\log \Upsilon\\
&- (\frac 1{v^2}+A)\heat v +B\lf(\log(1-\lambda e^{-t})+\frac{\lambda t}{e^t-\lambda}\ri)+\kappa D_3.\end{split}
\ee
Now let
$$
 v=u-\frac{1- e^{-t}}{1-e^{-s}}f+\frac{C_4(1+s_1e^{s_1-s})}{(1-e^{-s})(1-e^{s_1-s})}
$$
 By Lemmas \ref{l-uudot-upper-1} and  \ref{l-all-u}, we can find $C_4>0$ so that $v\ge 1$, and there is $C_5>0$ so that $$v\le  \frac{C_5(1+s_1e^{s_1-s})}{(1-e^{-s})(1-e^{s_1-s})}.
 $$
Let
\be\label{e-E}
E:=\frac{(1+s_1e^{s_1-s})}{(1-e^{-s})(1-e^{s_1-s})}.
\ee

Note that
 \bee
\begin{split}
& \heat u\\
=&\dot u-\Delta u\\
=&\dot u-n+\tr_g\lf(-(1-e^{-t})  \Ric(\theta_0)+e^{-t}\gamma_0 \ri)\\
\ge&\dot u-n+\tr_g\lf(\frac{1-e^{-t}}{1-e^{-s}}\lf(\beta\theta_0-e^{-s}\omega_0-\ii\ddbar f\ri)+ e^{-t}\gamma_0\ri)\\
\ge&\dot u+\lf[\frac{\beta(1-e^{-t})}{1-e^{-s}}+\e e^{-t}\ri]\Lambda-\frac{1-e^{-t}}{1-e^{-s}}\Delta f -n\\
\ge&\dot u+\lf[\frac{\beta(1-e^{-t})}{1-e^{-s}}+\e e^{-t}\ri]\Lambda+\heat \lf(\frac{1- e^{-t}}{1-e^{-s}}  f\ri)-\frac{ e^{-t}}{1-e^{-s}}f-n\\
\ge&\dot u+u+\lf[\frac{\beta(1-e^{-t})}{1-e^{-s}}+\e e^{-t}\ri]\Lambda+\heat \lf(\frac{1- e^{-t}}{1-e^{-s}}  f\ri)-\frac{C_6}{1-e^{-s}}.
\end{split}
\eee
because $\gamma_0\ge \omega_0+\e\theta_0$ and $t<s$.

On the other hand,
$$
-\dot u-u=\log\lf(\frac{\det h}{\det g}\ri)\le c(n)+n\log \Lambda.
$$
Hence
\be\label{e-g-2}
\heat v\ge -n\log \Lambda+ \lf[\frac{\beta(1-e^{-t})}{1-e^{-s}}+\e e^{-t}\ri]\Lambda -\frac{C_7}{1-e^{-s}}.
\ee
On the other hand, in a unitary frame with respect to $h$ so that $g_\ijb=\lambda_i\delta_{ij}$, then
 \be\label{e-tr-2}
 \begin{split}
 \Upsilon=&\sum_i\lambda_i\\
 =&\frac{\det g}{\det h}\sum_{i} (\lambda_1\dots\hat\lambda_i\dots\lambda_n)^{-1}\\
 \le &C_{8}\Lambda^{n-1}.
 \end{split}
 \ee
where we have used the upper bound of $\dot u+u=\log\frac{\det g}{\det h} $ in Lemma \ref{l-uudot-upper-1}. Combining \eqref{e-g-1}, \eqref{e-g-2} and \eqref{e-tr-2},  at $(x_0,t_0)$ we have
\be \label{e-tr-revised}
\begin{split}
0\le& C_2(1-\lambda e^{-t})\Lambda\lf(1  +
\frac{C_9  E^3(A+1)^2}{(1-\lambda e^{-t})\Upsilon}\ri)  +\lambda e^{-t}\lf(\log C_8+(n-1)\log\Lambda\ri)\\
&+ (\frac 1{v^2}+A)\lf( n\log \Lambda- \lf[\frac{\beta(1-e^{-t})}{1-e^{-s}}+\e e^{-t}\ri]\Lambda+\frac{C_7}{1-e^{-s}}\ri) \\
 &+B\lf(\log(1-\lambda e^{-t})+\frac{\lambda t}{e^t-\lambda}\ri)+\kappa D_3\\
 \le&\Lambda\left[C_2(1-\lambda e^{-t}) \lf(1  +
\frac{C_9E^3(A+1)^2}{(1-\lambda e^{-t})\Upsilon}\ri)-\frac{A+1}{C^2_5E^2}\lf(\frac{\beta(1-e^{-t})}{1-e^{-s}}+\e e^{-t}\ri)\ri] \\
&+[n(1+A)+\lambda(n-1)] \log \Lambda+ \frac{C_{10}(A+1) }{1-e^{-s}}\\&+B\lf(\log(1-\lambda e^{-t})+\frac{\lambda t}{e^t-\lambda}\ri)+\kappa D_3+\lambda\log C_8 \end{split}
\ee
where we have used the fact that $1\le v\le C_5E$.

{\bf Case 1}:  Let  $\lambda=1$. Suppose at $(x_0,t_0)$,
$$
\frac{C_2C_9E^3(A+1)^2}{(1-  e^{-t})\Upsilon}\ge \frac12\frac{1}{C^2_5E^2}\cdot(A+1)\cdot\beta\cdot\frac{1}{1-e^{-s}}
$$
Then
 \bee (1-  e^{-t})\Upsilon\le \frac{2C_2C_9C^2_5E^5(1-e^{-s})(A+1)}{\beta}\leq C_{11}E^5(A+1).
 \eee
Hence,
\bee
(1-  e^{-t})\log \Upsilon\le (1-  e^{-t})\log(C_{11}E^5(A+1)) -(1-e^{-t})\log(1-  e^{-t}) .
\eee
Therefore,
\be\label{e-tr-1-2}
\mathfrak{M}\le C(1+\log E)+\log (A+1).
\ee
for some $C(n,\beta, K,||f||_\infty)>0$. Suppose at $(x_0,t_0)$,
 $$
\frac{C_2C_9E^3(A+1)^2}{(1-  e^{-t})\Upsilon}< \frac12\frac{1}{C^2_5E^2}\cdot(A+1)\cdot\beta\cdot\frac{1}{1-e^{-s}},
$$  then at $(x_0,t_0)$ we have

\bee
\begin{split}
0\le &  (1-  e^{-t}) \Lambda  \lf(C_2  -
\frac12\frac{1}{C^2_5E^2}\cdot(A+1)\cdot\beta\cdot\frac{1}{1-e^{-s}}\ri) +n(A+2)\log \Lambda\\
&+ \frac{C_{10}(A+1) }{1-e^{-s}} +B\lf(\log(1-  e^{-t})+\frac{  t}{e^t-1}\ri)+\kappa D_3+\log C_8\\
=&\Lambda\lf[ (1-  e^{-t}) \lf(C_2  -
\frac12\frac{1}{C^2_5E^2}\cdot(A+1)\cdot\beta\cdot\frac{1}{1-e^{-s}} \ri)\ri]\\
&+n(A+2)\log ((1-e^{-t})\Lambda)+\frac{C_{10}(A+1) }{1-e^{-s}}-n(A+2)\log(1-e^{-t})\\
&+B\lf(\log(1-  e^{-t})+\frac{  t}{e^t-1}\ri)+\kappa D_3+\log C_8\\
\le &-(1-e^{-t})\Lambda +n(A+2)\log ((1-e^{-t})\Lambda)+\frac{C_{12}E^2}{1-e^{-s}},
\end{split}
\eee
provided $A=C_{13}E^2$ so that
$$
\frac12\frac{1}{C^2_5E^2}\cdot(A+1)\cdot\beta\cdot\frac{1}{1-e^{-s}} \ge (C_2+1)
$$
and
   $B$ is chosen so that $B=n(A+2)$ and $\kappa$ is small enough so that $\kappa D_2\le 1$.
 Hence using $1+\frac{1}{2}\log x\leq \sqrt{x},\;\forall x>0$, we have at $(x_0,t_0)$,
$$
(1-e^{-t})\Lambda\le \frac{C_{14}E^4}{1-e^{-s}},
$$
and so
$$
\log\Lambda\le \log\frac{C_{14}E^4}{1-e^{-s}}-\log(1-e^{-t}).
$$
By \eqref{e-tr-2}, we have

 \be\label{e-tr-4}
 \begin{split}
&(1-e^{-t})\log\Upsilon\\
\leq&(1-e^{-t})\lf( \log C_8+(n-1)\log \Lambda\ri) \\
\le&(1-e^{-t})\lf( \log C_8+(n-1)\lf(\log\frac{C_{14}E^4}{1-e^{-s}}-\log(1-e^{-t})\ri)\ri)\\
\le &(n-1)\log(\frac{1}{1-e^{-s}})+C_{15}(1+\log E).
\end{split}
\ee
Hence $\mathfrak{M}\le (n-1)\log(\frac{1}{1-e^{-s}})+C_{16}(1+\log E).$
By combining \eqref{e-tr-1}, \eqref{e-tr-1-2}  and using the choice of $A$, we may let $\kappa\rightarrow 0$ to conclude that on $ M\times(0,s_1]$,
$$
(1-e^{-t})\log \Upsilon\le (n-1)\log(\frac{1}{1-e^{-s}})+C_{17}(1+E).
$$
and hence (i) in the lemma is true. Here we have used the fact that $E\geq \log E+1$.

{\bf Case 2}: Let $\lambda=0$, then \eqref{e-tr-revised} becomes:
\bee
\begin{split}
0\le&\Lambda\lf[C_2\lf(1  +
\frac{C_9E^3(A+1)^2}{\Upsilon}\ri)-\frac{1}{C^2_5E^2}(A+1)\e e^{-t}\ri] \\
&+n(1+A) \log \Lambda+ \frac{C_{10}(A+1) }{1-e^{-s}}+\kappa D_3. \end{split}
\eee
We can argue as before to conclude that (ii) is true.

\end{proof}

Combining the lower bound of $\dot u+u$, we obtain:

\begin{cor}\label{eq-g} For any $0<s_0<s_1<s$, there is a constant $C$ depending only on $n,K, \b, ||f||_\infty$ and $s_0, s_1, s$  but independent of $\e,\rho_0$ such that if $0<\e<1$, $\rho_0>1$, we have
\bee
 C^{-1}h\leq g(t)\leq Ch \eee on $M\times[s_0, s_1]$. There is also a constant $\wt C(\e)>0$ which may also depend on $\e$ such that

 \bee
 \wt C^{-1}h\leq g(t)\leq \wt Ch
 \eee
 on $M\times[0, s_1]$.

\end{cor}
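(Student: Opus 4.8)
The plan is to obtain the two-sided estimate directly from the one-sided bounds already in hand: the upper bound for $\tr_h g$ from Lemma~\ref{l-trace-2} and the lower bound for $\dot u+u=\log(\det g/\det h)$ from Lemma~\ref{l-all-u}, together with an elementary eigenvalue inequality. No new analysis is needed; the issue is the bookkeeping of constants and time intervals.

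First I would fix $0<s_0<s_1<s$. On $M\times[s_0,s_1]$ one has $1-e^{-t}\ge 1-e^{-s_0}>0$, so Lemma~\ref{l-trace-2}(i) gives $\tr_h g\le C_1$ on $M\times[s_0,s_1]$ with $C_1$ depending only on $n,K,\beta,\|f\|_\infty$ and $s_0,s_1,s$, hence independent of $\e$ and $\rho_0$; this is already the upper bound $g(t)\le C_1 h$. Next I would produce a lower bound $\dot u+u\ge -C_2$ on $M\times[s_0,s_1]$, again independent of $\e,\rho_0$: if $s_1\le 1$ this is Lemma~\ref{l-all-u}(ii) combined with $\log t\ge\log s_0$; if $s_1>1$, apply Lemma~\ref{l-all-u}(ii) on the portion of $[s_0,s_1]$ lying in $(0,1]$ and Lemma~\ref{l-all-u}(iv) on the portion lying in $[1,s_1]$, and take the worse of the two constants. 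In all cases $\det g/\det h=e^{\dot u+u}\ge e^{-C_2}$ on $M\times[s_0,s_1]$.

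To convert these into $g(t)\ge C^{-1}h$ I would pass to a frame unitary for $h$ in which $g_{i\bar j}=\lambda_i\delta_{ij}$ with $\lambda_i>0$. Then $\prod_i\lambda_i=\det g/\det h\ge e^{-C_2}$ while every $\lambda_j\le\tr_h g\le C_1$, so
\[
\lambda_i=\frac{\prod_j\lambda_j}{\prod_{j\ne i}\lambda_j}\ge \frac{e^{-C_2}}{C_1^{\,n-1}}=:C^{-1},
\]
which is exactly $g(t)\ge C^{-1}h$ (equivalently $\tr_g h\le nC$). Combined with the upper bound this gives $C^{-1}h\le g(t)\le Ch$ on $M\times[s_0,s_1]$. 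For the last assertion the argument is identical but with the $\e$-dependent estimates valid up to $t=0$: Lemma~\ref{l-trace-2}(ii) gives $\tr_h g\le\wt C(\e)$ on $M\times[0,s_1]$, and Lemma~\ref{l-all-u}(iii) (patched with (iv) on $[1,s_1]$ when $s_1>1$) gives $\dot u+u\ge -\wt C(\e)$ there; the same eigenvalue manipulation then yields $\wt C^{-1}h\le g(t)\le\wt C h$ on $M\times[0,s_1]$.

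The only care required is in the interval bookkeeping — choosing which earlier estimate to invoke on which time subinterval so that the first constant stays free of $\e$ and $\rho_0$ — and in the small but essential observation that an upper bound on all eigenvalues together with a lower bound on their product forces a positive lower bound on each eigenvalue. There is no serious analytic obstacle, the work having been carried out in Lemmas~\ref{l-trace-2} and~\ref{l-all-u}.
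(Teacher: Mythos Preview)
Your proposal is correct and is exactly the argument the paper has in mind: it simply records the corollary as an immediate consequence of ``combining the lower bound of $\dot u+u$'' with Lemma~\ref{l-trace-2}, and your eigenvalue manipulation $\lambda_i\ge (\det g/\det h)\cdot(\tr_h g)^{-(n-1)}$ together with the interval bookkeeping between parts (ii), (iii), (iv) of Lemma~\ref{l-all-u} is precisely how one unpacks that sentence.
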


\section{Short time existence for the potential flow and the normalized Chern-Ricci flow}

Using the    a priori estimates in previous section, we are ready to discuss short time existence for the   the potential flow and the Chern-Ricci flow. We begin with the short time existence of the potential flow. We have the following:

\begin{thm}\label{t-instant-complete}
Let $(M,h)$ be a complete non-compact Hermitian metric { with \K form $\theta_0$.} Suppose there is $K>0$ such that the following holds.
\begin{enumerate}
\item There is a {proper} exhaustion function $\rho(x)$ on $M$ such that
$$|\partial\rho|^2_h +|\ddb \rho|_h \leq K.$$
\item $\mathrm{BK}_h\geq -K$;
\item The torsion of $h$, $T_h=\partial \omega_h$ satisfies
$$|T_h|^2_h +|\nabla^h_{\bar\partial} T_h |\leq K.$$
\end{enumerate}
Let $\omega_0$ be a nonnegative real (1,1) form with corresponding Hermitian form  $g_0$   on $M$ (possibly incomplete or degenerate) such that
\begin{enumerate}
\item[(a)] $g_0\le h$ and
$$|T_{g_0}|_h^2+|\nabla^h_{\bar\partial} T_{g_0}|_h+ |\nabla^{h}g_0|_h\leq K.$$

\item[(b)] There exist $f\in C^\infty(M)\cap L^\infty(M),\beta>0$ and $s>0$ so that  $$-\Ric(\theta_0)+e^{-s}(\omega_0+\Ric(\theta_0))+\ddb f\geq \beta \theta_0.$$

\end{enumerate}
Then \eqref{e-MP-1} has a solution on $M\times(0, s)$ so that $u(t)\to 0$ as $t\to0$ uniformly on $M$. Moreover, for any $0<s_0<s_1<s$, { let
$$
\a(t)=-\Ric(\theta_0)+e^{-t}(\Ric(\theta_0)+\omega_0)
$$}
then
$$\omega(t)=\a+\ii\ddbar u$$
is the \K form of a complete Hermitian metric which is uniformly equivalent to $h$ on $M\times[s_0, s_1]$.
\end{thm}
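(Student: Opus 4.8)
The plan is to construct the solution on $M\times(0,s)$ as a limit, as $\rho_0\to\infty$ and $\e\to 0$, of the solutions for the regularized initial data supplied by Lemma \ref{l-perturbed-1}. For each $\rho_0>1$ and $0<\e<1$, write $\gamma_0=\gamma_0(\rho_0,\e)=\eta_0\omega_0+(1-\eta_0)\theta_0+\e\theta_0$ and let $u_{\rho_0,\e}$ be the solution of \eqref{e-MP-1} on $M\times[0,s)$ given by that lemma, with $\a_{\rho_0,\e}=-\Ric(\theta_0)+e^{-t}(\Ric(\theta_0)+\gamma_0)$, which is smooth up to $t=0$; the associated $\omega_{\rho_0,\e}(t)=\a_{\rho_0,\e}+\ii\ddbar u_{\rho_0,\e}$ is the \K form of a metric $g_{\rho_0,\e}(t)$ solving the normalized Chern-Ricci flow \eqref{e-NKRF} from $\gamma_0$.

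Fix $0<s_0<s_1<s$. The a priori estimates of Section \ref{s-aprior} give bounds independent of $\rho_0,\e$: by Lemma \ref{l-uudot-upper-1} and Lemma \ref{l-all-u}, $|u_{\rho_0,\e}|+|\dot u_{\rho_0,\e}|\le C$ on $M\times[s_0,s_1]$; by Corollary \ref{eq-g}, $C^{-1}h\le g_{\rho_0,\e}(t)\le Ch$ on $M\times[s_0,s_1]$, with $C=C(n,K,\beta,\|f\|_\infty,s_0,s_1,s)$. On such a time interval the equation \eqref{e-MP-1} is therefore uniformly parabolic, and I would next invoke the standard local higher-order theory for parabolic complex Monge--Amp\`ere equations (a local Evans--Krylov estimate followed by parabolic Schauder bootstrapping): for every $\Omega\Subset M$ and every $k\ge 0$ this yields $\|u_{\rho_0,\e}\|_{C^k(\Omega\times[s_0,s_1])}\le C(\Omega,k,s_0,s_1)$, again independent of $\rho_0,\e$. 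Here one uses that the torsion and curvature data of $\gamma_0$ that enter the evolution of $\tr_h g$ were already controlled uniformly in $\rho_0,\e$ in Section \ref{s-aprior} (cf. \eqref{e-logY-1}, \eqref{e-logY-2}).

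Armed with these uniform local bounds, a diagonal argument over an exhaustion of $M$, over intervals $[s_0,s_1]\nearrow(0,s)$, and over sequences $\rho_0\to\infty$, $\e\to0$, extracts a subsequence of $u_{\rho_0,\e}$ converging in $C^\infty_{\mathrm{loc}}(M\times(0,s))$ to some $u$. On any fixed compact set, $\eta_0\equiv 1$ once $\rho_0$ is large, so $\gamma_0(\rho_0,\e)\to\omega_0$ and hence $\a_{\rho_0,\e}\to\a$ in $C^\infty_{\mathrm{loc}}$ as $\rho_0\to\infty$, $\e\to0$; thus $u$ solves \eqref{e-MP-1} on $M\times(0,s)$, and $\omega(t)=\a+\ii\ddbar u>0$ there because $g(t)=\lim g_{\rho_0,\e}(t)\ge C^{-1}h>0$ on each $[s_0,s_1]$. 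The squeeze
$$
nt\log(1-e^{-t})-\frac{C}{1-e^{-s}}\,t\ \le\ u(t)\ \le\ C\min\{t,1\}
$$
(from Lemma \ref{l-uudot-upper-1} and Lemma \ref{l-all-u}(i), which are $\rho_0,\e$-independent) passes to the limit and shows $u(t)\to 0$ uniformly on $M$ as $t\to 0$. Finally the bound $C^{-1}h\le g(t)\le Ch$ on $M\times[s_0,s_1]$ is inherited, so there $g(t)$ is uniformly equivalent to the complete metric $h$ and hence complete; since $0<s_0<s_1<s$ were arbitrary, $g(t)$ is complete for every $t\in(0,s)$.

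The main obstacle is the uniform local higher-order estimate in the second step: one must make sure that the $C^k_{\mathrm{loc}}$ bounds for $u_{\rho_0,\e}$ do not deteriorate as $\rho_0\to\infty$ or $\e\to0$. The two-sided metric equivalence of Corollary \ref{eq-g} is what makes the linearization of \eqref{e-MP-1} uniformly parabolic on compact subsets of $M\times(0,s)$; beyond that, one only needs that the ingredients entering the interior estimates --- the potential, its time derivative, and the curvature/torsion of $\gamma_0$ --- are bounded independently of the approximation parameters, and this is precisely the content of Section \ref{s-aprior}. All of these bounds degenerate as $t\to 0^+$ (reflecting that $g_0$ is merely $\le h$ and possibly degenerate), which is why the equivalence to $h$ is asserted only on $[s_0,s_1]\Subset(0,s)$ rather than up to $t=0$, even though $u$ itself extends continuously to $0$.
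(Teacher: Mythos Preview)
Your argument is correct and follows essentially the same strategy as the paper: pass to the limit in the approximations $u_{\rho_0,\e}$ using the uniform a priori estimates of Section~\ref{s-aprior} (Lemmas~\ref{l-uudot-upper-1}, \ref{l-all-u} and Corollary~\ref{eq-g}) together with local higher-order estimates, then read off $u(t)\to 0$ and the metric equivalence on $[s_0,s_1]$ from the same bounds.

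There are two small procedural differences worth flagging. First, the paper takes the limits in two separate stages: it sends $\rho_0\to\infty$ with $\e$ fixed, using the $\e$-dependent part of Corollary~\ref{eq-g} (the bound $\wt C^{-1}h\le g\le \wt Ch$ on $M\times[0,s_1]$) to produce a solution $u_\e$ with initial data $\omega_0+\e\theta_0$ that is smooth \emph{up to} $t=0$; only afterwards does it let $\e\to 0$ using the $\e$-independent bounds on $[s_0,s_1]$. This two-step organization is not needed for the present theorem, but the paper does it explicitly ``for later application'': the intermediate flows $g_\e(t)$ (with honest initial data $g_0+\e h$) are exactly what feed into the proof of Theorem~\ref{t-initial-Kahler-1}. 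Your simultaneous diagonal limit is fine for Theorem~\ref{t-instant-complete} in isolation but would have to be reorganized before the subsequent short-time results can be proved. Second, for the higher-order step the paper invokes the local Calabi/curvature estimates for the Chern--Ricci flow from \cite{ShermanWeinkove2013} rather than Evans--Krylov plus Schauder on the Monge--Amp\`ere equation; either route works once $C^{-1}h\le g\le Ch$ is in hand, but the Sherman--Weinkove reference is the one tailored to the Hermitian (non-K\"ahler) setting.
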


\begin{proof}[Proof of Theorem \ref{t-instant-complete}] For later application, we construct the solution in the following way. Combining the local higher order estimate of Chern-Ricci flow (See \cite{ShermanWeinkove2013} for example) with Corollary \ref{eq-g} for any $1>\e>0$,  using diagonal argument as $\rho_0\to \infty$ we obtain a solution $u_\e(t)$ to    \eqref{e-MP-1} with initial data $\omega_0+\e \theta_0$ on $M\times[0,s)$ which is smooth up to $t=0$, so that the corresponding solution $g_\e(t)$ of \eqref{e-NKRF} has smooth solution on $M\times[0,s)$ with initial metric $g_\e(0)=g_0+\e h$. Moreover, $g_\e$ is uniformly equivalent to $h$ on $M\times[0,s_1]$ for all $0<s_1<s$ and  for any $0<s_0<s_1<s$, there is a constant $C>0$ independent of $\e$ such that
$$
C^{-1}h\le g_\e\le Ch
$$
on $M\times[s_0,s_1]$.
Using the local higher order estimate of Chern-Ricci flow   \cite{ShermanWeinkove2013} again, we can find $\e_i\to0$ such that $u_{\e_i}$ converge locally uniformly on any compact subsets of $M\times(0,s)$ to a solution $u$ of \eqref{e-MP-1}.
 By Lemmas \ref{l-uudot-upper-1}, \ref{l-all-u},   we see that $u(t)\to 0$ as $t\to0$ uniformly $M$. Moreover, for any $0<s_0<s_1<s$, $\omega(t)=\a+\ii\ddbar u$ is the \K form of the solution to \eqref{e-NKRF}. Also, the corresponding Hermitian metric $g(t)$ is a complete Hermitian metric which is uniformly equivalent to $h$ in $M\times[s_0, s_1]$ for any $0<s_0<s_1<1$.

\end{proof}

Next we want to discuss { the short time existence of the Chern-Ricci flow. The solution $\omega(t)$ obtained from the Theorem \ref{t-instant-complete} satisfies the normalized Chern-Ricci flow on $M\times(0,s)$. Hence we concentrate on the discussion of the   behaviour of $\omega(t)$ as $t\to0$ for the solution obtained      in Theorem \ref{t-instant-complete}}.  In case that
 $h$ is \K and $\omega_0$ is closed, we have the following:

\begin{thm}\label{t-initial-Kahler-1}
With the same notation  and assumptions as in Theorem \ref{t-instant-complete}. Let $\omega(t)$ be the solution of \eqref{e-NKRF} obtained in the theorem.  If in addition $h$ is \K and $d\omega_0=0$. Let $U=\{\omega_0>0\}$.  Then $\omega(t)\rightarrow \omega_0$ in $C^\infty(U)$ as $t\rightarrow 0$, {uniformly in compact sets of $U$}. 
\end{thm}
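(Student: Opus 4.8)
The plan is to show that on any relatively compact open $V \Subset U$ with $\overline V \subset U$, the metrics $\omega(t)$ converge to $\omega_0$ smoothly as $t\to 0$. Since $\omega_0 > 0$ on $\overline V$, we have $\omega_0 \ge c\, \theta_0$ on $\overline V$ for some $c>0$, and the a priori estimates already provide, for each $s_0 \in (0,s)$, two-sided bounds $C^{-1}h \le g(t) \le Ch$ on $M\times[s_0,s_1]$ (Corollary~\ref{eq-g}). The first task is to upgrade the lower bound near $t=0$ \emph{locally on $V$}: I would run a barrier/maximum-principle argument for $\log\tr_h g(t)$ (or rather for $\tr_{g}h$) localized by the cutoff $\eta_0$ and by a spatial cutoff supported in $U$, using that at $t=0$, $g(0)=\gamma_0 = \omega_0 + \e\theta_0 \ge (c+\e)\theta_0$ on $\overline V$ and the differential inequality \eqref{e-logY}. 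Because $h$ is \K and $\omega_0$ is closed, $\a(t)$ is a closed form and $\omega(t)=\a(t)+\ii\ddbar u(t)$ with $u(t)\to 0$ uniformly; this is the key extra structure. The upshot should be: for every $V\Subset U$ there are $c_V>0$ and $t_V>0$ with $c_V^{-1}h \le g(t)\le c_V h$ on $V\times(0,t_V]$.

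Next, with uniform parabolicity of \eqref{e-NKRF} in hand on $V\times(0,t_V]$, I would invoke the interior higher-order (Shi/Sherman--Weinkove type) estimates for the Chern-Ricci flow, cited in the paper, to conclude that all covariant derivatives of $g(t)$ are bounded on $V'\times(0,t_{V'}]$ for any $V'\Subset V$, uniformly up to $t=0$. This gives precompactness of $\{g(t)\}$ in $C^\infty_{loc}(U)$ as $t\to 0$, so it suffices to identify every subsequential limit with $\omega_0$. Here is where the closedness of $\omega_0$ enters decisively: write $\omega(t) - \omega_0 = (\a(t)-\omega_0) + \ii\ddbar u(t)$. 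The first term is $-(1-e^{-t})(\Ric(\theta_0)+\omega_0)$, which tends to $0$ smoothly. For the second term, the potential representation \eqref{e-potential} together with the estimates of Lemmas~\ref{l-uudot-upper-1} and~\ref{l-all-u} gives $u(t)\to 0$ uniformly; combined with the now-established local $C^\infty$ bounds and interpolation, $\ii\ddbar u(t)\to 0$ in $C^\infty_{loc}(U)$. Hence any subsequential limit of $\omega(t)$ equals $\omega_0$ on $U$, and by precompactness the full limit is $\omega_0$, uniformly on compact subsets of $U$.

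The main obstacle is the \emph{local} lower bound $g(t)\ge c_V h$ near $t=0$ on $V$: the global estimates degenerate as $t\to 0$ precisely because $\omega_0$ may vanish somewhere, so one cannot use them directly, and the evolution inequality for $\tr_g h$ has a bad sign unless controlled by the torsion and curvature terms from \eqref{e-logY}--\eqref{e-logY-2}. I expect this to require a carefully chosen test function of the form $F = \log\tr_g h - A u + (\text{spatial cutoff in } U) + B t$, exploiting that $\tr_g h$ is bounded at $t=0$ on $\overline V$ and that $\heat u$ is controlled below via $\heat u = \dot u - \Delta u$ together with $\dot u + u = \log(\det g/\det h)$; the cutoff localizes the argument to $U$ where $\omega_0$ is nondegenerate, and one must absorb the gradient terms coming from differentiating the cutoff using the Cauchy--Schwarz trick already used in the proof of Lemma~\ref{l-trace-2}. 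Once this local uniform parabolicity is secured, the remainder is the standard parabolic bootstrap plus the cohomological identity above. A secondary technical point is that convergence should be stated and proved on exhausting compact subsets of $U$, with constants $c_V, t_V$ depending on $V$, which is exactly the form asserted in the theorem.
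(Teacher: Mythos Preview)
Your overall architecture---(i) establish local uniform equivalence $C^{-1}h \le g(t) \le Ch$ on $V \times (0, t_V]$ for each $V \Subset U$, (ii) feed this into Sherman--Weinkove interior estimates to get $C^\infty$ bounds up to $t=0$, (iii) identify the limit via $\omega(t)-\omega_0 = -(1-e^{-t})(\Ric(\theta_0)+\omega_0)+\ii\ddbar u(t)$ and $u(t)\to 0$---matches the paper's, and steps (ii)--(iii) are essentially correct as you describe them.

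The gap is in step (i). A \emph{fixed} spatial cutoff $\psi$ supported in $U$ will not close the maximum-principle argument: since $\psi$ is independent of $g(t)$, both $|\Delta_{g(t)}\psi|$ and $|\nabla\psi|_{g(t)}^2$ are controlled only by $C\,\tr_g h$, so the cutoff contributes terms of order $(\tr_g h)/\psi$ or $(\tr_g h)/\psi^{2}$ to $\heat F$, and these blow up near $\partial\{\psi>0\}$. They cannot be absorbed by any fixed multiple of $\heat u$ (or of any barrier built from $u,\dot u$), whose favourable $\tr_g h$ coefficient is merely bounded. The Cauchy--Schwarz device from Lemma~\ref{l-trace-2} handles the cross term coming from $\Upsilon^{-1}\hat\nabla\Upsilon$ at a maximum, not this Laplacian-of-cutoff term; that lemma uses the global weight $\kappa\rho$, not a compactly supported cutoff.

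The paper resolves (i) by a genuinely different mechanism. It works with the approximating \emph{K\"ahler}--Ricci flows $\wt g_i$ (initial data $g_0+\e_i h$, smooth up to $t=0$) and proves a self-improving estimate (Lemma~\ref{local-bound}): assuming $\wt g_i(t)\ge \a^2 h$ on a $\wt g_i(t)$-geodesic ball of fixed radius for $t\in[0,\gamma_2\a^{8(n-1)}]$, one first extracts a curvature bound $|\Rm(\wt g_i)|\le C\a_1^{-8}t^{-1}$ from \cite{ShermanWeinkove2012}, then invokes Perelman's distance-distortion lemma to build a \emph{flow-distance} cutoff $\Phi$ with
\[
\heat\log(\Phi+\e)\le C(\Phi+\e)^{-1}
\]
in the barrier sense---crucially with no $\tr_g h$ factor. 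Pairing this with $v=(\tau-t)\dot\phi+\phi-f+nt$ (not $u$), which satisfies $\heat v\ge\beta\,\tr_g h$, the test function $\log\tr_h g - Lv + m\log(\Phi+\e)$ yields $\wt g_i\ge\gamma_1^2 h$ with $\gamma_1$ \emph{independent of} $\a$; a continuity argument in $\a$ then gives the uniform local equivalence on $[0,\gamma_2\gamma_1^{8(n-1)}]$. The K\"ahler hypotheses on $h$ and $\omega_0$ enter exactly here: they make the approximating flows genuine K\"ahler--Ricci flows, so that Perelman's lemma (a Riemannian Ricci-curvature statement) is available. Your outline does not contain this pseudolocality-type bootstrap, and without it the localization you propose does not go through.
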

\begin{rem}
If in addition $h$ has bounded curvature, then one can use Shi's \KR flow \cite{Shi1989,Shi1997} and the argument in \cite{ShermanWeinkove2012} to show that the \KR flow $g_i(t)$ starting from $g_0+\e_i h$ has bounded curvature when $t>0$. The uniform local $C^k$ estimates will follow from the pseudo-locality theorem \cite[Corollary 3.1]{HeLee2018} and the modified Shi's local estimate \cite[Theorem 14.16]{Chow2}.
\end{rem}

By Theorem \ref{t-instant-complete} we have the following:
 \begin{cor}\label{c-shorttime} Let $(M,h)$ be a complete non-compact \K manifold with bounded curvature. Let $\theta_0$ be the \K form of $h$. Suppose there is a compact set $V$ such that outside $V$, $-\Ric(\theta_0)+\ii\ddbar f\ge\beta \theta_0$ for some $\beta>0$ for some bounded smooth function $f$. Then for any closed nonnegative real (1,1) form $\omega_0$ such that $\omega_0\le \theta_0$, $|\nabla_h\omega_0|$ is bounded, and  $\omega_0>0$ on $V$, there is $s>0$ such that \eqref{e-NKRF} has a solution $\omega(t)$ on $M\times(0,s)$ so that $\omega(t)$ is uniformly equivalent to $h$ on $M\times[s_0,s_1]$ for any $0<s_0<s_1<s$ and $\omega(t)$ attains initial data $\omega_0$ in the set where $\omega_0>0$.

\end{cor}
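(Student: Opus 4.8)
The plan is to verify that the hypotheses here imply the hypotheses of Theorem \ref{t-instant-complete} and Theorem \ref{t-initial-Kahler-1}, and then read off the conclusion. First I would check conditions (1)--(3) on $(M,h)$: since $h$ is \K we have $T_h\equiv 0$, so condition (3) is automatic; since $h$ has bounded curvature, $\mathrm{BK}_h\geq -K$ for some $K>0$, giving (2); and since $(M,h)$ is complete \K with bounded curvature, by the results of Shi and Tam (\cite{Shi1989,Tam2010}, cited in the remark after Theorem \ref{main-instant-complete}) there is a proper exhaustion function $\rho$ with $|\p\rho|_h^2+|\ddb\rho|_h\leq K$ after enlarging $K$, giving (1). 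Next, for $\omega_0$: it is closed, hence $T_{g_0}\equiv 0$ (since $h$ is \K, the torsion of $g_0$ with respect to the Chern connection of $h$ is $\nabla^h_{\p}\omega_0$ which vanishes when $d\omega_0=0$ — more precisely $T_{g_0}=\p\omega_0$), and $|\nabla^h g_0|_h$ is bounded by assumption; together with $\omega_0\leq\theta_0$ this is condition (a).

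The main point is to produce the constant $\beta>0$, the function $f$, and the time $s>0$ of condition (b), namely
$$
-\Ric(\theta_0)+e^{-s}(\omega_0+\Ric(\theta_0))+\ddb f\geq \beta\theta_0,
$$
i.e. $-(1-e^{-s})\Ric(\theta_0)+e^{-s}\omega_0+\ddb f\geq\beta\theta_0$. Outside the compact set $V$ we are given $-\Ric(\theta_0)+\ddb f_0\geq\beta_0\theta_0$ for some bounded smooth $f_0$ and $\beta_0>0$; since $\omega_0\geq 0$ everywhere and $\Ric(\theta_0)$ is bounded (bounded curvature), for $s$ small the term $-(1-e^{-s})\Ric(\theta_0)$ is a small bounded perturbation of $0$, so on $M\setminus V$ we get $-(1-e^{-s})\Ric(\theta_0)+e^{-s}\omega_0+(1-e^{-s})\ddb f_0\geq \tfrac{1}{2}(1-e^{-s})\beta_0\theta_0$ once $s$ is small, say. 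On the compact set $V$ we are given $\omega_0>0$, so $\omega_0\geq 2\delta\theta_0$ on $V$ for some $\delta>0$ by compactness; choosing $s$ small enough that $e^{-s}\geq\tfrac12$ and $(1-e^{-s})|\Ric(\theta_0)|\leq\delta/2$ on $V$ (again using boundedness of curvature), and noting $\ddb f_0$ is bounded on $V$ so $(1-e^{-s})\ddb f_0$ is a small perturbation, we get a positive lower bound $\beta\theta_0$ on $V$ as well. Taking $f=(1-e^{-s})f_0$ (bounded and smooth) and $\beta=\min$ of the two lower bounds gives (b). The hypothesis (b) of the two theorems needs a single $s$; we pick the smallest $s$ that works for both regions.

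With (1)--(3) and (a)--(b) verified, Theorem \ref{t-instant-complete} gives a solution of \eqref{e-NKRF} on $M\times(0,s)$ with $\omega(t)=\a+\ii\ddbar u$ the \K form of a complete Hermitian (indeed \K, since $h$ is \K and $\omega_0$ is closed) metric uniformly equivalent to $h$ on $M\times[s_0,s_1]$ for $0<s_0<s_1<s$. Since $h$ is \K and $d\omega_0=0$, Theorem \ref{t-initial-Kahler-1} applies and gives $\omega(t)\to\omega_0$ in $C^\infty$ on compact subsets of $U=\{\omega_0>0\}$, which is the statement that $\omega(t)$ attains the initial data $\omega_0$ on $\{\omega_0>0\}$. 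Note $V\subset U$ since $\omega_0>0$ on $V$. The one step requiring genuine care is the choice of $s$ in the paragraph above: one must simultaneously control the sign of $-(1-e^{-s})\Ric(\theta_0)+e^{-s}\omega_0+(1-e^{-s})\ddb f_0$ on all of $M$, which works precisely because on the "bad" region $V$ the form $\omega_0$ supplies a uniform positive lower bound while off $V$ the Ricci assumption does, and the coupling constant $1-e^{-s}$ in front of the Ricci terms can be made as small as needed without killing either lower bound — the bounded-curvature hypothesis is what makes $\Ric(\theta_0)$ a uniformly small perturbation. Everything else is a direct citation of the two preceding theorems.
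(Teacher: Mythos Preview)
Your proposal is correct and follows essentially the same route as the paper: verify the structural hypotheses (1)--(3) and (a) from the \K and bounded-curvature assumptions, then establish condition (b) by splitting into $M\setminus V$ (where the Ricci inequality, scaled by $1-e^{-s}$, does the work) and $V$ (where $\omega_0\ge 2\delta\theta_0$ absorbs the small Ricci and $\ddb f$ perturbations for $s$ small), and finally invoke Theorems \ref{t-instant-complete} and \ref{t-initial-Kahler-1}. One minor remark: on the compact set $V$ you do not actually need the global bounded-curvature hypothesis to control $\Ric(\theta_0)$ and $\ddb f_0$ --- compactness of $V$ already bounds them there --- but this does not affect the argument.
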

\begin{proof} Let $s>0$, then
$$
-(1-e^{-s})\Ric(\theta_0)+(1-e^{-s})\ii\ddbar f\ge (1-e^{-s})\beta \theta_0
$$
outside $V$. On $V$,
$$
\omega_0 -(1-e^{-s})\Ric(\theta_0)+(1-e^{-s})\ii\ddbar f\ge \beta'\theta_0
$$
for some $\beta'>0$, provided $s$ is small enough. The Corollary then follows from Theorems \ref{t-instant-complete} and \ref{t-initial-Kahler-1}.
\end{proof}

\begin{rem}\label{r-shorttime}
Suppose $\Omega$ is a bounded strictly pseudoconvex domain in $\C^n$ with smooth boundary, then there is a complete \K metric with Ricci curvature  bounded above by the negative constant near infinity by \cite{ChengYau1982}. Hence Corollary \ref{c-shorttime} can be applied to this case, which has been studied by Ge-Lin-Shen \cite{Ge-Lin-Shen}.
\end{rem}

To prove the Theorem \ref{t-initial-Kahler-1},
suppose $h$ is \K and $d\omega_0=0$, then   solution in  Theorem \ref{t-instant-complete} is the limit of solutions $g_i(t)$ of the normalized  \KR flow on $M\times[0, s)$ with initial data $g_0+\e_i h$, where $\e_i\to 0$. Here we may assume $s\leq 1$.  By Lemma \ref{l-all-u} (iii) and Lemma \ref{l-trace-2} (ii),  each $g_i(t)$ is uniformly equivalent to $h$, the uniform constant here may depend on $\e_i$. In this section, we will use $\tilde g_i(t)=(t+1)g_i( \log (t+1))$ to denote the unnormalized \KR flow and $\phi_i$ be the corresponding potential flow to the unnormalized \KR flow $\tilde g_i(t)$, see appendix.

We want to prove the following:

\begin{lma}\label{l-initial-Kahler-1}   With the same notation  and assumptions as in Theorem \ref{t-initial-Kahler-1}, for any precompact open subset $\Omega$ of $U$, there is $S_1>0$ and $C>0$,
 $$
 C^{-1}h\le  \tilde g_i(t)\le Ch
 $$
 for all $i$  in $\Omega\times[0,S_1]$.
\end{lma}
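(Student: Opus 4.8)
The goal is a local-in-space, uniform-in-$i$ two-sided bound on the unnormalized K\"ahler-Ricci flow $\wt g_i(t)$ on $\Omega\times[0,S_1]$, where $\Omega\Subset U=\{\omega_0>0\}$. The plan is to argue entirely at the level of the potential flow $\phi_i$ for $\wt g_i$, using a carefully chosen barrier that exploits positivity of $\omega_0$ near $\Omega$, and then feed the resulting bounds into a parabolic Schwarz-lemma / maximum-principle estimate for $\tr_h \wt g_i$ and $\tr_{\wt g_i} h$. First I would fix a slightly larger precompact set $\Omega\Subset\Omega'\Subset U$ and a constant $\lambda>0$ with $\omega_0\ge 2\lambda\theta_0$ on $\overline{\Omega'}$; shrinking $S_1$ if necessary, $\a(t)=-(1-e^{-t})\Ric(\theta_0)+e^{-t}\omega_0$ (equivalently its unnormalized counterpart) stays $\ge\lambda\theta_0$ on $\overline{\Omega'}\times[0,S_1]$, since the $\Ric(\theta_0)$ contribution is $O(t)$ and controlled by $K$. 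The point of passing to $U$ is exactly that on $\overline{\Omega'}$ the reference form of the flow is a genuine metric, uniformly comparable to $h$, independently of $i$ and of $\e_i$.

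Next I would establish the two-sided bound on the potential. The upper bounds $u\le C\min\{t,1\}$ and $\dot u\le Ct/(e^t-1)$ from Lemma \ref{l-uudot-upper-1} already give, after translating to the unnormalized picture, $\phi_i\le C$ and $\det\wt g_i/\det h = e^{\dot\phi_i}\le C$ on $\Omega'\times[0,S_1]$, uniformly in $i$. For the lower bound I cannot use the global estimates of Lemma \ref{l-all-u} (they degenerate or depend on $\e$), so instead I would run a localized maximum principle on $\Omega'$: take a cutoff $\chi$ supported in $\Omega'$ with $\chi\equiv1$ on $\Omega$, and compare $\phi_i$ (or $\dot\phi_i+\phi_i$) against a barrier of the form $-A t - B(1-\chi)\rho + \epsilon\log(\cdot)$-type corrections, using that on the support of $1-\chi$ we have no control but there $\chi=0$ so the barrier is harmless, while on $\Omega$ the reference metric is $\ge\lambda\theta_0$, so the Monge-Amp\`ere term $\log(\wt\omega_i^n/\theta_0^n)$ is bounded below once $\tr_{\wt g_i}h$ is controlled — this is the familiar closed-loop that is broken by the standard trick of combining the $\phi$-lower bound with the $\tr_h\wt g_i$ upper bound. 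Concretely: an upper bound on $\tr_h\wt g_i$ plus the upper bound on $\det\wt g_i/\det h$ yields a lower bound on $\det\wt g_i/\det h$, hence on $\dot\phi_i$, hence (integrating from $t=0$ where $\phi_i=0$) on $\phi_i$; and conversely the parabolic Schwarz lemma gives $\heat\log\tr_h\wt g_i\le C\tr_{\wt g_i}h + (\text{torsion terms})$, with the torsion terms controlled by $|T_h|,|\nabla^h_{\bar\partial}T_h|\le K$ and, crucially on $\Omega'$, by the bounds on $g_0$ from hypothesis (a) — note $h$ is K\"ahler here, so $T_h=0$ and this simplifies. The standard Aubin-Yau argument: apply the maximum principle to $(1-e^{-t})\log\tr_h\wt g_i - A\phi_i - B(1-\chi)\rho + Bt\log(1-e^{-t})$ on $\Omega'\times[0,S_1]$; at an interior max the Schwarz-lemma inequality combined with $\heat\phi_i\ge \text{(const)}\tr_{\wt g_i}h - C$ (which uses $\a\ge\lambda\theta_0$ on $\Omega'$) forces $\tr_{\wt g_i}h$, and then $\tr_h\wt g_i$ via $\tr_h\wt g_i\le C(\tr_{\wt g_i}h)^{n-1}\det\wt g_i/\det h$, to be bounded, giving the upper bound on $\wt g_i$; the lower bound then follows by pairing with the lower bound on $\det\wt g_i/\det h$.

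The main obstacle is handling the boundary of $\Omega'$: since $\wt g_i$ is only locally, not globally, controlled, the cutoff $1-\chi$ must be inserted so that any would-be maximum is pushed away from $\p\Omega'$, but $1-\chi$ is not globally defined with bounded Hessian unless one uses $\rho$ or a fixed smooth function — I would use a fixed $\Omega''$ with $\Omega'\Subset\Omega''\Subset U$ and a fixed cutoff, absorbing $|\ddb\chi|$, $|\nabla\chi|$ into the $i$-independent constants, and arrange the barrier so the infimum/supremum is attained in $\overline\Omega'$ or at $t=0$. A secondary technical point is that the initial data $g_0+\e_i h$ has torsion only bounded in terms of $K$ (hypothesis (a)), which is exactly what is needed for the torsion error terms in the evolution of $\tr_h\wt g_i$ to be uniformly controlled on $\Omega'$; since these come with a factor that is harmless near $t=0$, the estimate closes on a short time interval $[0,S_1]$ with $S_1$ depending only on $\lambda$, $K$, $n$ and $\Omega,\Omega'$. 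Once the two-sided bound on $\wt g_i$ is in hand on $\Omega\times[0,S_1]$, Theorem \ref{t-initial-Kahler-1} follows by local parabolic higher-order (Shi-type) estimates and passing to the limit $\e_i\to0$.
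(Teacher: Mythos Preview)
Your localization scheme is where the argument breaks. You propose a fixed spatial cutoff $\chi$ supported in $\Omega'$ and a barrier of the type $(1-e^{-t})\log\tr_h\wt g_i - A\phi_i - B(1-\chi)\rho + \cdots$, but none of these terms forces the maximum away from $\partial\Omega'$: the function $(1-\chi)\rho$ is bounded on the compact set $\overline{\Omega'}$, so the supremum can perfectly well be attained on the spatial boundary, where you have no $i$-independent control of $\tr_h\wt g_i$ (that is precisely the statement you are trying to prove). If instead you make the cutoff blow down, e.g.\ insert $m\log\chi$, the Laplacian of that term produces $\chi^{-2}|\nabla\chi|^2_{\wt g_i}$ and $\chi^{-1}\Delta_{\wt g_i}\chi$, both of which scale like $\tr_{\wt g_i}h$ with an \emph{uncontrolled} coefficient near $\{\chi=0\}$; you cannot absorb this into the good $-A\heat\phi_i\ge A\lambda\tr_{\wt g_i}h$ term because the bad coefficient blows up. Your proposal acknowledges the boundary as ``the main obstacle'' but does not actually supply a mechanism to overcome it.

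The paper's proof (Lemma \ref{local-bound}) handles localization by a genuinely different device: a bootstrap/continuity argument on evolving geodesic balls. One first \emph{assumes} $\wt g_i(t)\ge\alpha^2 h$ on $B_{\wt g_i(t)}(p,\sigma)$ for $t\in[0,\gamma_2\alpha^{8(n-1)}]$; combined with the global volume upper bound (Lemma \ref{slma-1}) this gives a two-sided bound, so Sherman--Weinkove's local curvature estimate yields $|\Rm(\wt g_i)|\le C\alpha_1^{-8}t^{-1}$. This curvature bound feeds into Perelman's distance-distortion lemma, giving $\heat(d_t(p,\cdot)+C\alpha_1^{-4}t^{1/2})\ge 0$ in the barrier sense outside a small ball. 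One then cuts off with $\Phi=\xi(\sigma^{-1}(d_t+C\alpha_1^{-4}t^{1/2}))$, and $m\log(\Phi+\e)$ contributes only a \emph{bounded} error $C(\Phi+\e)^{-1}$ to the heat operator, precisely because of the distance-distortion inequality. The maximum principle applied to $\log\tr_h\wt g_i - Lv + m\log(\Phi+\e)$ then gives $\wt g_i\ge\gamma_1^2 h$ with $\gamma_1$ \emph{independent of} $\alpha$, and a continuity argument in $\alpha$ closes the loop. The Sherman--Weinkove estimate plus Perelman's lemma is the missing ingredient in your sketch; without a curvature bound there is no control on how the region of validity shrinks, and a static cutoff cannot compensate.

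A minor point: since here $h$ is K\"ahler and $d\omega_0=0$, the perturbed initial metrics $g_0+\e_i h$ are K\"ahler, so all torsion terms in the evolution of $\log\tr_h\wt g_i$ vanish; your ``secondary technical point'' about torsion is moot.
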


{\begin{proof}[Proof of Theorem \ref{t-initial-Kahler-1}]
Suppose the lemma is true, then Theorem \ref{t-initial-Kahler-1} will  follow from the local estimates in \cite{ShermanWeinkove2012}.
\end{proof}

It remains to prove Lemma \ref{l-initial-Kahler-1}.}

\begin{lma}\label{slma-1} We have $|\phi_i|\leq C_0,\;\dot\phi_i\leq C_0$ on $M\times[0, e^s-1)$ for some positive constant $C_0$ independent of $i$.
\begin{proof} By Lemma \ref{l-uudot-upper-1}, we have \bee
\log\frac{{\omega_i}^n(s)}{\theta_0^n}=\dot u_i+u_i\leq C. \eee Here $C$ is a positive constant independent of $i$ and ${\wt\omega_i}(s)$ is the corresponding normalized flow with the relation \bee
\wt g_i(t)e^{-s}= g_i(s), t=e^s-1. \eee
Then by the equation $\dot\phi_i=\log\frac{\wt\omega^n_i(t)}{\theta_0^n}$, we obtain the upper bound on $\dot\phi_i(t)$. The lower bound on $\phi_i$ follows from Lemma \ref{l-all-u}.
\end{proof}
\end{lma}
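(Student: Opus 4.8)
The plan is to write the unnormalized potential $\phi_i$ in closed form in terms of the normalized potential $u_i$ via the time change of Appendix~A, and then to read off both the upper bound for $\dot\phi_i$ and the two-sided bound for $\phi_i$ from the a priori estimates of Section~\ref{s-aprior}, whose constants are already independent of $i$.

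First I would pass to the normalized flow. Since $\tilde g_i(\sigma)=e^{\tau}g_i(\tau)$ with $\sigma=e^{\tau}-1$, we have $\tilde\omega_i(\sigma)^n=e^{n\tau}\omega_i(\tau)^n$; combining the unnormalized potential equation $\dot\phi_i=\log\big(\tilde\omega_i(\sigma)^n/\theta_0^n\big)$ with \eqref{e-MP-1}, which gives $\log\big(\omega_i(\tau)^n/\theta_0^n\big)=\dot u_i+u_i$, yields
\bee
\frac{d}{d\sigma}\phi_i(\sigma)=n\tau+(\dot u_i+u_i)(\tau)=n\log(\sigma+1)+(\dot u_i+u_i)\big(\log(\sigma+1)\big).
\eee
Integrating in $\sigma$, using $\phi_i(0)=0$ and the identity $e^{\tau}u_i(\tau)=\int_0^\tau e^{\tau'}(\dot u_i+u_i)\,d\tau'$ from \eqref{e-potential}, would give
\bee
\phi_i(\sigma)=n\big[(\tau-1)e^{\tau}+1\big]+e^{\tau}u_i(\tau),\qquad \tau=\log(\sigma+1)\in[0,s).
\eee

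Next I would extract the bounds. For $\dot\phi_i$: Lemma~\ref{l-uudot-upper-1} applied to the $\e_i$-flow gives $\dot u_i+u_i=\log(\omega_i^n/\theta_0^n)\le C(n,K)$ on $M\times[0,s)$, and since $\sigma<e^s-1$ forces $n\log(\sigma+1)<ns$, we get $\dot\phi_i\le ns+C(n,K)=:C_0$; integrating then gives $\phi_i\le C_0(e^s-1)$. For the lower bound on $\phi_i$ I would use the closed form above: on $\tau\in[0,s)$ the factor $e^{\tau}=\sigma+1$ is at most $e^s$, the polynomial $n[(\tau-1)e^{\tau}+1]$ is bounded, and $u_i(\tau)\ge -C$ uniformly on $M\times[0,s)$ by Lemma~\ref{l-all-u}(i), whose right-hand side $-\frac{C}{1-e^{-s}}\tau+n\tau\log(1-e^{-\tau})$ stays finite as $\tau\uparrow s$ since $s>0$. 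After enlarging $C_0$ if necessary this yields $|\phi_i|\le C_0$ on $M\times[0,e^s-1)$.

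The main (and essentially only) obstacle is bookkeeping: one must make sure that every constant used is genuinely independent of $i$. This holds because each $\e_i$-flow $g_{\e_i}$ is obtained as the $\rho_0\to\infty$ limit of the $\gamma_0(\rho_0,\e_i)$-flows of Lemma~\ref{l-perturbed-1}, and the estimates invoked — Lemma~\ref{l-uudot-upper-1} and Lemma~\ref{l-all-u}(i) — have constants depending only on $n,K,\beta,||f||_\infty$ and $s$, not on $\rho_0$ nor on $\e\in(0,1)$, so they persist in the limit. The one subtlety is to use part (i) of Lemma~\ref{l-all-u} rather than (v) for the lower bound on $u_i$, since (i) remains bounded up to time $s$ whereas (v) degenerates as $s_1\to s$.
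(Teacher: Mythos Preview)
Your proof is correct and follows the same approach as the paper: relate $\dot\phi_i$ and $\phi_i$ to $\dot u_i+u_i$ and $u_i$ via the time change between the normalized and unnormalized flows, then invoke Lemma~\ref{l-uudot-upper-1} for the upper bound and Lemma~\ref{l-all-u}(i) for the lower bound. Your version is in fact more explicit than the paper's---you derive the closed form $\phi_i(\sigma)=n[(\tau-1)e^{\tau}+1]+e^{\tau}u_i(\tau)$ and verify carefully that the constants are independent of $i$---but the underlying argument is identical.
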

Before we state the next lemma, we fix some notations. Let $p\in U$. By scaling, we may assume that there is a holomorphic coordinate neighbourhood of $p$ which can be identified with $B_0(2)\subset \C^n$ with $p$ being the origin and $B_0(r)$ is the Euclidean ball with radius $r$. Moreover, $B_0(2)\Subset U$. We may further assume $\frac14h\le h_E\le 4h$ in $B_0(2)$ where $h_E$ is the Euclidean metric. Since $\omega_0>0$, there is $\sigma>0$ such that $B_{g_i(0)}(p,2\sigma)\subset B_0(2)$ and
$$
g_i(0)\ge 4\sigma^2h
$$
in $B_0(2)$ for some $0<\sigma<1$. This is because $g_i(0)=\omega_0+\e_i h$.  Here we use $h_E$ because we want to use the estimates in \cite{ShermanWeinkove2012} explicitly. Let $\tau=e^{s}-1$,  where $s$ is the constant in assumption in Theorem \ref{t-instant-complete}, { and let $\dot \phi$ be as in the proof of Lemma \ref{slma-1}. It is easy to see that Lemma \ref{l-initial-Kahler-1} follows from the following:}

 \begin{lma}\label{local-bound} With the same notation and assumptions as in Theorem \ref{t-initial-Kahler-1} and with the above set up. There exist positive constants $1>\gamma_1, \gamma_2>0$ with $\gamma_2<\tau$  which are independent of $i$ such that
{ $$\gamma_1^{-2}h\ge \wt g_i(t)\geq \gamma_1^2 h$$}
on  $B_{\wt g_i(t)}(p,\sigma),\;  t\in [0,\gamma_2\gamma_1^{8(n-1)}]$.
\end{lma}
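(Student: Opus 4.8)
The plan is to use a localized maximum principle argument on the potential $\phi_i$ of the unnormalized \KR flow $\wt g_i(t)$, following the scheme of Sherman--Weinkove in \cite{ShermanWeinkove2012} but keeping track that all constants can be chosen independent of $i$. First I would introduce a suitable cutoff: since $g_i(0)=\omega_0+\e_i h\ge 4\sigma^2 h$ on $B_0(2)$ and since $\omega_0\le \theta_0$, on a slightly smaller ball one has a two-sided bound $4\sigma^2 h\le g_i(0)\le 2h$. Fix a smooth function $\varphi$ on $B_0(2)$ that is $1$ near $B_0(1)$ and vanishes near $\p B_0(2)$, and work with a barrier quantity of the form
\[
Q=\varphi^{2N}\,\tr_h\wt g_i(t)\cdot e^{-A\phi_i}
\]
(or the reciprocal, for the lower bound), where $N$ is a large integer and $A>0$ a large constant, both to be chosen depending only on $n,K,\beta,\|f\|_\infty,s$ and on the fixed geometric data $\sigma$, but not on $i$. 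Because $h$ is \K with bounded geometry in $B_0(2)$ (indeed $\tfrac14 h\le h_E\le 4h$) and $|\phi_i|+\dot\phi_i\le C_0$ uniformly by Lemma \ref{slma-1}, the evolution inequality for $\log\tr_h\wt g_i(t)$ derived in Section \ref{s-aprior} (equation \eqref{e-logY} together with the torsion estimates, here with $\gamma_0$ replaced by the \K metric $h$ so that the torsion terms $\mathrm{III}'$ drop out) gives
\[
\heat \log\tr_h\wt g_i \le C\,\tr_{\wt g_i}h
\]
with $C=C(n,K)$, and an analogous inequality holds for $\log\tr_{\wt g_i}h$. The potential satisfies $\heat\phi_i=\dot\phi_i-\Delta_{\wt g_i}\phi_i$, and $\Delta_{\wt g_i}\phi_i=\tr_{\wt g_i}(\wt g_i-\theta_0)=n-\tr_{\wt g_i}\theta_0$, so $-A\phi_i$ contributes a good negative term $-A\,\tr_{\wt g_i}\theta_0\le -cA\,\tr_{\wt g_i}h$ (using $\theta_0=h\ge \tfrac14 h_E$), which for $A$ large dominates the bad term $C\,\tr_{\wt g_i}h$; the Cauchy--Schwarz cross terms coming from $\nabla\varphi$ are absorbed into $\varphi^{2N}\tr_{\wt g_i}h$ by the standard trick of choosing $N$ large, exactly as in \cite{ShermanWeinkove2012}.

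Next I would run the maximum principle for $Q$ on $B_0(2)\times[0,\gamma_2]$ for a small $\gamma_2$ to be determined. Since $\varphi$ vanishes near the boundary, a positive interior maximum of $Q$ is attained at some $(x_0,t_0)$ with $x_0\in B_0(2)$ and, if $t_0>0$, at that point $\heat Q\ge 0$ and $\nabla Q=0$; the computation above then yields at $(x_0,t_0)$ an inequality of the form
\[
0\le \varphi^{2N}\tr_h\wt g_i\cdot e^{-A\phi_i}\Big(C'-c'A\,\tr_{\wt g_i}h\Big),
\]
forcing $\tr_{\wt g_i}h(x_0,t_0)\le C'/(c'A)$, hence $\tr_h\wt g_i(x_0,t_0)\le \big(\tr_{\wt g_i}h\big)^{n-1}\det(\wt g_i/h)\le C''$ using the uniform upper bound on $\dot\phi_i=\log(\wt\omega_i^n/\theta_0^n)$ from Lemma \ref{slma-1}. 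This bounds $Q(x_0,t_0)$, hence $Q$, hence $\tr_h\wt g_i$ on $B_0(1)\times[0,\gamma_2]$ by a constant independent of $i$. The remaining case is $t_0=0$, where $Q(x_0,0)$ is controlled by $\sup_{B_0(2)}\tr_h g_i(0)\le 2n$ and $|\phi_i(0)|\le C_0$, again uniformly in $i$. Running the companion argument with $\tr_{\wt g_i}h$ (using $g_i(0)\ge 4\sigma^2 h$ for the initial bound, which is where the factor $\sigma$ and hence $\gamma_1$ enters) gives the two-sided bound $\gamma_1^{-2}h\ge \wt g_i(t)\ge \gamma_1^2 h$ on $B_0(1)\times[0,\gamma_2]$ for appropriate $\gamma_1=\gamma_1(n,K,\beta,\|f\|_\infty,s,\sigma)$ and $\gamma_2$ small. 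Finally, one passes from the fixed Euclidean ball $B_0(1)$ to the metric ball $B_{\wt g_i(t)}(p,\sigma)$: since on the time interval where the two-sided bound holds $\wt g_i(t)$ is comparable to $h$ which is comparable to $h_E$, the metric ball $B_{\wt g_i(t)}(p,\sigma)$ is contained in a fixed Euclidean ball, and shrinking $\gamma_2$ to $\gamma_2\gamma_1^{8(n-1)}$ (the power $8(n-1)$ is exactly the bookkeeping from the cutoff exponent $N$ in the Sherman--Weinkove estimate) keeps the argument valid throughout that ball.

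The main obstacle I expect is not any single estimate but the uniformity in $i$: one must check that every constant produced — in the evolution inequalities for $\tr_h\wt g_i$ and $\tr_{\wt g_i}h$, in the choice of $A$, $N$, $\gamma_1$, $\gamma_2$, and in the comparison of metric balls to Euclidean balls — depends only on $n,K,\beta,\|f\|_\infty,s$ and the fixed data $\sigma,h|_{B_0(2)}$, and in particular not on $\e_i$ (even though each individual flow $g_i$ is only known to be uniformly equivalent to $h$ with an $\e_i$-dependent constant). This is exactly why the cutoff localization is essential: it replaces the global, $\e_i$-dependent comparison by a local, $\e_i$-free one, at the cost of the $\varphi^{2N}$ weight whose gradient terms must be carefully absorbed. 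A secondary technical point is that here $h$ is \K so the torsion terms $\mathrm{III}'$ in \eqref{e-logY} involving $T_0=T_{\gamma_0}$ must be re-examined with $\gamma_0$ replaced by $h$; since $h$ is \K these vanish, simplifying matters, but one must make sure the version of Lemma \ref{l-a-1} being invoked applies to the unnormalized \KR flow $\wt g_i(t)$ rather than the perturbed Chern--Ricci flow.
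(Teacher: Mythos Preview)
Your approach is quite different from the paper's and, as written, contains a computational error and a misidentification of where the exponent $8(n-1)$ comes from.

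The paper does \emph{not} use a static spatial cutoff. Instead it runs a bootstrap: assuming $\wt g_i(t)\ge \alpha^2 h$ on $B_{\wt g_i(t)}(p,\sigma)$, the uniform volume-ratio upper bound from Lemma~\ref{slma-1} yields a two-sided bound $\alpha_1^2 h_E\le \wt g_i(t)\le \alpha_1^{-2}h_E$ with $\alpha_1^2\sim \alpha^{2(n-1)}$, and then the Sherman--Weinkove interior estimate gives $|\Rm|\le C/(\alpha_1^8 t)$. This curvature bound is what allows the Perelman distance-distortion trick, producing a \emph{time-dependent} cutoff $\Phi(x,t)=\xi(\sigma^{-1}(d_t(p,x)+C\alpha_1^{-4}\sqrt t))$. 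The barrier used is $v=(\tau-t)\dot\phi+\phi-f+nt$, which satisfies $\heat v\ge \beta\,\tr_g h$ \emph{globally} via condition~(b); the maximum principle then gives $\log\tr_h g-L(\tau-t)\dot\phi\le C$, and \emph{both} the upper and lower bounds on $g$ are read off algebraically from this single inequality together with $\dot\phi\le C_0$. The exponent $8(n-1)$ in the time interval $[0,\gamma_2\gamma_1^{8(n-1)}]$ is exactly $\alpha_1^8\sim\alpha^{8(n-1)}$ from the Sherman--Weinkove curvature bound --- it has nothing to do with a cutoff power $N$.

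Your formula $\Delta_{\wt g_i}\phi_i=n-\tr_{\wt g_i}\theta_0$ is incorrect: from $\wt\omega_i(t)=\omega_i(0)-t\,\Ric(\theta_0)+\ii\ddbar\phi_i$ one gets
\[
\Delta_{\wt g_i}\phi_i=n-\tr_{\wt g_i}g_i(0)+t\,\tr_{\wt g_i}\Ric(\theta_0),
\]
so the good term is $-A\,\tr_{\wt g_i}g_i(0)$, not $-A\,\tr_{\wt g_i}\theta_0$. On $B_0(2)$ this is salvageable because $g_i(0)\ge 4\sigma^2 h$ there and $\Ric(\theta_0)$ is locally bounded, but the argument must also absorb the extra term $-A\dot\phi_i$, for which you only have an \emph{upper} bound on $\dot\phi_i$; you will need to convert $-\dot\phi_i\le n\log\tr_{\wt g_i}h$ and absorb the resulting logarithmic term into the linear one. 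The same remark applies to your ``companion argument'' for $\tr_{\wt g_i}h$: you should invoke the parabolic Schwarz lemma $\heat\log\tr_{\wt g_i}h\le C(\mathrm{BK}_h)\,\tr_{\wt g_i}h$ explicitly rather than the Section~\ref{s-aprior} inequality, which is set up for $\tr_h g$.

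If you carry this out carefully, your static-cutoff argument should yield uniform two-sided bounds on a fixed ball $B_0(1)\times[0,\gamma_2]$ --- which in fact suffices for Lemma~\ref{l-initial-Kahler-1} directly and is arguably cleaner than the paper's statement. What it does \emph{not} do is reproduce the specific form of Lemma~\ref{local-bound} (metric balls, time interval scaled by $\gamma_1^{8(n-1)}$), since that form is an artifact of the paper's bootstrap mechanism; your final paragraph attempting to recover that scaling is not justified.
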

\begin{proof} The lower bound in lemma will follow from the following:\vskip .1cm

\noindent\underline{\it Claim}: There are constants $1>\gamma_1,  \gamma_2>0$ independent of $\a>0$ and $i$ with $\gamma_2<\tau$ such that if
$\wt g_i(t)\ge \a^2h$ on  $B_{\wt g_i(t)}(p,\sigma)$, $t\in [0,  \gamma_2\a^{8(n-1)}]$, then
$\wt g_i(t)\ge \gamma_1^2 h$ on $B_{\wt g_i(t)}(p,\sigma)$ for $t\in [0, \gamma_2\a^{8(n-1)}]$.
\vskip .1cm

The main point is that  $\gamma_1$ does not depend on $\a$. Suppose the claim is true. Fix $i$, let $\a\le \gamma_1$ be the supremum of $\wt\a$ so that $\wt g_i(t)\ge \wt\a^2h$ on $  B_{\wt g_i(t)}(p,\sigma)$, $t\in [0, \gamma_2\wt\a^{8(n-1)}]$. Since $\wt g_i(0)\ge \sigma^2h$ in $U$, we see that $\a>0$.   Suppose $\a<\gamma_1$. By continuity, there is $\e>0$ such that $\a+\e<\gamma_1$. Then $\gamma_2 \a^{8(n-1)} \le \gamma_2  <\tau$. By the claim, we can conclude that
$$
 \wt g_i(t)\ge \gamma_1^2 h\geq (\a+\e)^2h
$$
in  $B_{\wt g_i(t)}(p,\sigma)$, $t\in [0,\gamma_2\a^{8(n-1)}]$. By choosing a possibly smaller $\e$ and by continuity, the above inequality is also true for $t\in [0,\gamma_2(\a+\e)^{8(n-1)}]$. This is a contradiction.

To prove the claim, let $\gamma_1$ and $\gamma_2>0$ be two constants to be determined later and are independent of $\a$ and $i$. In the following, $C_k$ will denote a positive constant independent of $\a$ and $i$. In the following, for simplicity in notation, we suppress the index $i$ and simply write $\wt g_i$ as $g$.

Suppose $\a\le \gamma_1$ is such that
$$
g(t)\ge \a^2 h
$$
in $  B_{g(t)}(p,\sigma),\;t\in[0,\gamma_2 \a^{8(n-1)}]$.  By Lemma \ref{slma-1}, $\det(g(t))/\det(h)\le C_1$ for some $C_1>1$. Hence we have
\bee
\a^2h\le g(t)\le C_1\a^{-2(n-1)}h
\eee
 on $B_{g(t)}(p,\sigma),\;t\in[0,\gamma_2 \a^{8(n-1)}]$ and hence on $B_h(p,C_1^{-1/2}\a^{n-1}\sigma)\times [0,\gamma_2 \a^{8(n-1)}]$ because $B_h(p,C_1^{-1/2}\a^{n-1}\sigma)\subset B_{g(t)}(p,\sigma)$ for $t\in[0,\gamma_2 \a^{8(n-1)}]$. This can be seen by considering the maximal $h$-geodesic inside $B_t(p,\sigma)$.  Together with the fact that $\frac14h \le h_E\le 4h$ on $B_0(2)$, we conclude that
\be\label{e-alpha-2}
\a_1^2h_E\le g(t)\le  \a_1^{-2}h_E
\ee
on $  B_0(\frac1{2\sqrt{C_1}}\a^{n-1}\sigma)\times[0, \gamma_2\a^{8(n-1)}]$, where $\a_1>0$ is given by
\be\label{e-alpha-1}
\a_1^2=\frac1{4C_1}\a^{2(n-1)}.
\ee
 By \cite[Theorem 1.1]{ShermanWeinkove2012}, we conclude that
\be\label{e-Rm}
|\Rm(g(t))|\le \frac{C_2}{\a_1^8t}
\ee
on $  B_0(\frac\sigma 2\a_1)\times[0, \gamma_2\a^{8(n-1)}]$.   From the proof in \cite{ShermanWeinkove2012}, the constant $C_2$ depends on an upper bound of the existence time but not its precise value. In particular, it is independent of $\a$ here.  By \eqref{e-alpha-2}, we conclude that \eqref{e-Rm} is true on { $ B_{g(t)}(p, \frac\sigma2\a_1^2)$, $t\in[0, \gamma_2\a^{8(n-1)}]$}.

By \cite[Lemma 8.3]{Perelman2003} (see also \cite[Chapter 18, Theorem 18.7]{Chow}), we have:
\be\label{e-distance-1}
\heat (d_t(p,x)+C_3\a_1^{-4}t^\frac12)\ge0
\ee
in the sense of barrier (see the definition in Appendix \ref{s-max}) outside $B_{g(t)}(p,\a_1^4\sqrt t)$, provided
\be\label{e-t-1}
  t^\frac12\le \frac\sigma2\a_1^{-2}.
\ee

Let $\xi\ge0$ be smooth with $\xi=1$ on $[0,\frac 43]$ and is zero outside $[0,2]$, with $\xi'\le 0, |\xi'|^2/\xi+  |\xi''|\le C $. Let

$$
\Phi(x,t)=\xi( \sigma^{-1}\eta(x,t))
$$
where $\eta(x,t)=d_t(p,x)+C_3\a_1^{-4}t^\frac12$. For any $\e>0$, for $t>0$ satisfying \eqref{e-t-1}, if $d_t(p,x)+C_3\a_1^{-4}t^\frac12<\frac43\sigma$, then $\Phi(x,t)=1$ near $x$ and so
\bee
\heat(\log(\Phi+\e))=0.
\eee
If $d_t(p,x)+C_3\a_1^{-4}t^\frac12\ge\frac43\sigma$ and $d_t(p,x)\ge \a_1^4t^\frac12$, then
 in the sense of barrier we have:

\be\label{e-Phi-1}
\begin{split}
& \heat \log (\Phi+\e)\\
 =&   \lf(\frac{\xi'}{\xi} \sigma^{-1}\heat\eta-\frac{\xi''}{\xi} \sigma^{-2}|\nabla\eta|^2+\frac{(\xi')^2}{\xi^2} \sigma^{-2}|\nabla \eta|^2\ri)\\
 \le & C_4(\Phi+\e)^{-1}.
\end{split}
\ee
by the choice of $\xi$ and \eqref{e-distance-1}. Hence there exists $C_5>0$ such that if

\be\label{e-t-2}
t^\frac12\le C_5\a_1^4
\ee
then $t$ also satisfies \eqref{e-t-1} and $C_3\a_1^{-4}t^{1/2}<\frac\sigma 3$. Moreover, $C_5$ can be chosen so that either $d_t(p,x)+C_3\a_1^{-4}t^\frac12<\frac43\sigma$ or $d_t(p,x)+C_3\a_1^{-4}t^\frac12\ge\frac43\sigma$ and $d_t(p,x)\ge \a_1^4t^\frac12$. Hence \eqref{e-Phi-1} is true in the sense of barrier for $t\in (0, C_5^2\a_1^8]$.

Consider the function
$$
F=\log \tr_hg -Lv+m\log (\Phi+\e)
$$
where  $v=(\tau-t)\dot\phi+\phi-f+nt$, $\tau=e^s-1$. Here $L, m>0$ are constants to be chosen later which are independent of $i,\ \a$. Recall that $v$
satisfies

\bee
\heat v=\tr_g (\omega_0-\tau \Ric(\theta_0)+\ddb f) \geq \b \tr_g h.
\eee
and
\bee
\heat \log \tr_h g\le C_6\tr_g h
\eee
by Lemma \ref{l-a-1} with vanishing torsion terms here. Let
\be\label{e-L}
L\beta= C_6+1+\tau^{-1}.
\ee
Note that by the A.M.-G.M. inequality and the definition of $\dot \phi$, we have
\be\label{e-AMGM}
-\dot \phi  \le n\log \tr_gh;\ \ \log \tr_hg\le \dot\phi +(n-1)\log\tr_gh.
\ee
So
\bee
\log \tr_gh\ge \frac{1}{ n(\tau L-1)+(n-1)}(\log \tr_hg-\tau L\dot\phi)
\eee
Then in the sense of barrier
\bee
\begin{split}
\heat F\le & -\tr_gh+m C_4  (\Phi+\e)^{-1}\\
\le &-\exp\lf(C_7 (\log \tr_hg-\tau L\dot\phi)\ri)+m C_4  (\Phi+\e)^{-1}\\
\le &-\exp\lf(C_7F-C_8-C_7m\log(\Phi+\e)\ri)+m C_4  (\Phi+\e)^{-1}\\
=&-(\Phi+\e)^{-1}mC_4\lf[\exp (C_7F-C_8-\log(mC_4))-1\ri]\\
\end{split}
\eee
if $mC_7=1$, where we have used the upper bound of $\dot \phi$ and the bound of $\phi$ in Lemmas \ref{slma-1}. So
\bee
\begin{split}
&\heat (C_7F-C_8-\log(mC_4))\\ \le& -\frac{mC_4C_7}{ \Phi+\e}\lf[\exp (C_7F-C_8-\log(mC_4))-1\ri]\\
\le&0
\end{split}
\eee
in the sense of barrier whenever $C_7F-C_8-\log(mC_4)>0$. Then by the maximum principle Lemma \ref{max}, we conclude that
$$
C_7F-C_8-\log(mC_4)\le\sup_{t=0}\lf(C_7F-C_8-\log(mC_4)\ri).
$$
Let $\e\to0$, using the definition of $\Phi$, the choice of $C_5$ and   the bound of $|\phi|$, we conclude that in $  B_{g(t)}(p,\sigma)$,
\be\label{e-trace-lower}
\log \tr_hg-L(\tau-t)\dot \phi \le C_9
\ee
provided $t\in [0, C_5^2\a_1^8]$. On the other hand, as in \eqref{e-AMGM}, we have
\bee
\begin{split}
\log \tr_gh\le& -\dot \phi+(n-1)\log\tr_hg\\
=&(n-1)\lf(\log\tr_hg-L(\tau-t)\dot \phi\ri)+(n-1)(L(\tau-t)-1)\dot\phi\\
\le&C_{10}
\end{split}
\eee
provided
\be\label{e-t-3}
Lt\le  L\tau-1.
\ee
Here we have used the upper bound of $\dot \phi$ in Lemma \ref{slma-1}.

Hence there is $\gamma_1>0$ independent of $\a$ and $i$ such that if $t$ satisfies \eqref{e-t-2} and \eqref{e-t-3}, then
$$
g_i(t)\ge \gamma_1^2h
$$
on $B_{g_i(t)}(p,\sigma)$. Let $\gamma_2<\tau$ be such that
 $$
 \gamma_2=\min\{C_5^2,L^{-1}(L\tau-1)\}\times (4C_1)^{-4}
$$
where $C_1, C_5$ are the constants in \eqref{e-alpha-1} and \eqref{e-t-2} respectively and $L$ is given by \eqref{e-L}. If $t\in[0,\gamma_2\a^{8(n-1)}]$, then $t$ will satisfy  \eqref{e-t-2}. One can see that the claim this true.

{ By \eqref{e-trace-lower} and Lemma \ref{slma-1}, we conclude that
$$
\wt g_i(t)\le C_{11}h
$$
on $B_{\wt g_i(t)}(p,\sigma)$ for $t\in[0,\gamma_2\a^{8(n-1)}]$. The upper bound in the Lemma follows by choosing a possibly smaller $\gamma_1$.}

\end{proof}

For the case of Chern-Ricci flow, the result is less satisfactory because the property of $d(x,t)$ does not behave as nice as in the \K case. As before, under the assumptions of Theorem \ref{t-instant-complete}, let $g(t)$ be the Chern-Ricci flow $g(t)$ constructed in the theorem. We have the following:
\begin{prop}\label{p-initial-CR}
With the same notation  and assumptions as in Theorem \ref{t-instant-complete}.  Suppose $\tr_{g_0}h=o(\rho)$. Then  $g(t)\rightarrow g_0$ as $t\rightarrow 0$ in $M$.   The convergence is in $C^\infty$ topology and is uniform in compact subsets of $M$.
\end{prop}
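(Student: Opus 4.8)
The plan is to reduce the statement to a bound $C^{-1}h\le g_\e(t)\le Ch$ near $t=0$ that is uniform in the approximation parameter $\e$, and then bootstrap by local regularity. Recall from the proof of Theorem \ref{t-instant-complete} that $g(t)=\lim_{\e_i\to0}g_{\e_i}(t)$ in $C^\infty_{loc}(M\times(0,s))$, where each $g_\e(t)$ solves \eqref{e-NKRF}, is smooth up to $t=0$, and has initial data $g_\e(0)=g_0+\e h$. Since $\a(t)=-\Ric(\theta_0)+e^{-t}(\Ric(\theta_0)+\omega_0)\to\omega_0$ in $C^\infty_{loc}$ and $u(t)\to0$ uniformly on $M$ by Theorem \ref{t-instant-complete}, the current $\omega(t)=\a(t)+\ii\ddbar u(t)$ already converges weakly to $\omega_0$; the point is to upgrade this to $C^\infty_{loc}$ convergence. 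It suffices to prove that for every $\Omega\Subset M$ there are $t_0>0$ and $C>0$, independent of $\e\le1$, with $C^{-1}h\le g_\e(t)\le Ch$ on $\Omega\times[0,t_0]$. Granting this, the local higher order estimates for the Chern--Ricci flow (in the form that retains the initial data, cf.\ \cite{ShermanWeinkove2013} and \cite[Theorem 14.16]{Chow2}), together with the fact that $g_\e(0)=g_0+\e h$ has locally bounded geometry uniformly in $\e$, give $C^k$ bounds on $\Omega'\times[0,t_0]$ uniform in $\e$ for every $\Omega'\Subset\Omega$; passing to a subsequential limit identifies these bounds with $g(t)$, which then extends $C^\infty$ to $t=0$ with $g(0)=\lim(g_0+\e_i h)=g_0$, and since $\Omega'$ is arbitrary this is the assertion.

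To prove the uniform equivalence I would work with the unnormalized flow $\wt g_\e(t)$, its potential $\phi_\e$, and $v_\e=(\tau-t)\dot\phi_\e+\phi_\e-f+nt$ with $\tau=e^s-1$, exactly as in the proof of Lemma \ref{local-bound}; by Lemmas \ref{slma-1}, \ref{l-uudot-upper-1} and \ref{l-all-u} we have $\dot\phi_\e\le C_0$, $|\phi_\e|\le C_0$ and $v_\e\le C_0$ with $C_0$ independent of $\e$. The decisive advantage of this picture is that, by condition (b), $\heat v_\e=\tr_{\wt g_\e}(\omega_0-\tau\Ric(\theta_0)+\ddb f)\ge\b\,\tr_{\wt g_\e}h$ with \emph{no degeneration as $t\to0$}, unlike the normalized potential of Lemma \ref{l-trace-2}. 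Combined with the parabolic Schwarz-type inequality $\heat\log\tr_h\wt g_\e\le C\,\tr_{\wt g_\e}h+(\text{gradient torsion terms})$ from Lemma \ref{l-a-1} — in which, unlike the \K case, the torsion terms are genuinely present and are controlled by conditions (3) and (a) — I would run the maximum principle for
\[
F=\log\tr_h\wt g_\e-Lv_\e-\delta\rho\qquad\text{on }M\times[0,t_0],
\]
with $L$ large, $\delta>0$ small and fixed, $t_0$ small. For each fixed $\e$, $F$ is bounded above (by Corollary \ref{eq-g} and Lemma \ref{l-trace-2}(ii)) and, since $\delta\rho\to\infty$, $F\to-\infty$ at spatial infinity, so $\sup F$ is attained. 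At an interior maximum, $\hat\n F=0$ lets one absorb the gradient torsion terms into the favourable term $\b\,\tr_{\wt g_\e}h$ as in the proof of Lemma \ref{l-trace-2}, while $\heat(-\delta\rho)=\delta\Delta_{\wt g_\e}\rho\le\delta K\,\tr_{\wt g_\e}h$ by condition (1); choosing $L\b>C+\delta K$ forces $\heat F<0$ there, a contradiction. Hence $\sup F$ is attained at $t=0$, where $\log\tr_h\wt g_\e(0)\le\log((1+\e)n)$ is bounded and $v_\e(0)=\tau\dot\phi_\e(0)+\phi_\e(0)-f\ge\tau\log(\det g_0/\det h)-\|f\|_\infty\ge-\tau n\log\tr_{g_0}h-C$; by the hypothesis $\tr_{g_0}h=o(\rho)$ this is $o(\rho)$, so $F(0)\to-\infty$ at infinity and $\sup_{t=0}F$ is finite and independent of $\e$. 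Thus $\log\tr_h\wt g_\e-Lv_\e-\delta\rho\le C$, and on $\{\rho\le R\}\times[0,t_0]$ we get $\tr_h\wt g_\e\le C(R)$, uniformly in $\e$.

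The lower bound follows as in Lemma \ref{local-bound}: the inequality just obtained contains $\log\tr_h\wt g_\e-L(\tau-t)\dot\phi_\e\le C$ on $\{\rho\le R\}$, so by A.M.--G.M.,
\[
\log\tr_{\wt g_\e}h\le-\dot\phi_\e+(n-1)\log\tr_h\wt g_\e=(n-1)\bigl(\log\tr_h\wt g_\e-L(\tau-t)\dot\phi_\e\bigr)+\bigl((n-1)L(\tau-t)-1\bigr)\dot\phi_\e,
\]
and using only $\dot\phi_\e\le C_0$ and shrinking $t_0$ so that $(n-1)L(\tau-t)\ge1$, one obtains $\tr_{\wt g_\e}h\le C(R)$ on $\{\rho\le R\}\times[0,t_0]$, again uniformly in $\e$. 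Together the two estimates give $C(R)^{-1}h\le\wt g_\e(t)\le C(R)h$ there, hence the same for $g_\e(t)$ after the harmless rescaling $g_\e(t)=e^{-t}\wt g_\e(e^t-1)$, which is precisely the uniform equivalence needed above.

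The hard part — and the reason the conclusion is weaker than Theorem \ref{t-initial-Kahler-1}, where one localizes to a geodesic ball via Perelman's distance monotonicity — is that no such monotonicity is available for the Chern--Ricci flow, so the maximum principle must be carried out globally on $M$ with a barrier built from the fixed exhaustion $\rho$. One then has to balance three contributions to $\heat F$, all of order $\tr_{\wt g_\e}h$: the gradient torsion terms in the evolution of $\log\tr_h\wt g_\e$, the barrier error $\delta\Delta_{\wt g_\e}\rho$, and the favourable term $-L\b\,\tr_{\wt g_\e}h$; this dictates the order in which $L$, $\delta$ and $t_0$ are chosen, and it requires control of the initial data at spatial infinity. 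The hypothesis $\tr_{g_0}h=o(\rho)$ is exactly what supplies that control: it forces $\dot\phi_\e(0)\ge\log(\det g_0/\det h)\ge-n\log\tr_{g_0}h-C$ to grow strictly sublinearly in $\rho$, which is what makes $F(0)\to-\infty$ and allows the global argument to close.
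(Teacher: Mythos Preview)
Your reduction to a uniform-in-$\e$ local equivalence $C^{-1}h\le g_\e(t)\le Ch$ on $\Omega\times[0,t_0]$, followed by local higher-order estimates, is correct and is exactly the paper's Lemma~\ref{l-initial-CR-1}. The gap is in your maximum-principle argument for the upper trace bound.

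When you substitute $\hat\nabla F=0$ at an interior maximum, the gradient--torsion term $\mathrm{I}$ of Lemma~\ref{l-a-1} becomes
\[
\mathrm{I}\ \le\ C\,\Upsilon^{-1}\Theta^{1/2}\bigl(L\,|\nabla v_\e|_g+\delta\,|\nabla\rho|_g\bigr).
\]
The $\delta|\nabla\rho|$ piece is harmless, but the $L|\nabla v_\e|$ piece is not: $v_\e$ involves $\phi_\e$ and $\dot\phi_\e$, whose gradients you have no control over. In Lemma~\ref{l-trace-2}, which you cite, this term is absorbed \emph{not} into $\beta\,\tr_gh$ but into the good term $-\tfrac{2}{v^3}|\nabla v|^2$ produced by the extra $+\tfrac{1}{v}$ in the test function; your purely linear $-Lv_\e$ generates no such term, so the absorption you claim cannot be carried out. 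In the K\"ahler situation of Lemma~\ref{local-bound} the issue never arises because $T_0=0$ and $\mathrm{I}$ vanishes --- that is precisely why that argument does not transfer verbatim to the Chern--Ricci flow.

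The paper resolves this in a genuinely different way. It first proves separately (Sublemma~\ref{sl-initial-CR-1}) that $\dot\phi_\e\ge -C-\sigma\rho$ for every $\sigma>0$; this does follow, as you observe, from the initial behaviour $\dot\phi_\e(0)\ge -n\log\tr_{g_0}h-C$. For the upper trace bound (Sublemma~\ref{sl-initial-CR-2}) it replaces $v_\e$ by the simpler potential $\xi=\phi_\e+nt-\tfrac{t}{\tau}f+C_1$ (no $\dot\phi$), applies a strictly concave transform $\Phi(\xi)=2-e^{-\xi}$ so that $\heat\Phi(\xi)$ contains $-\Phi''|\nabla\xi|^2\ge e^{-C}|\nabla\xi|^2$, and couples multiplicatively with $P(\rho)=\rho+C(\sigma)$, i.e.\ considers $\log\Upsilon-\Phi(\xi)P(\rho)$. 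After $\hat\nabla F=0$, Cauchy--Schwarz splits the torsion contribution into a $|\nabla\xi|^2$ part, absorbed by $-\Phi''$, and a $\Theta$ part. The hypothesis $\tr_{g_0}h=o(\rho)$ enters here, not at $t=0$: it gives $g_0\ge Q(\rho)^{-1}h$ with $Q(\rho)=\sigma(\rho+C(\sigma))$, so $\heat\xi$ produces $(1-\tfrac{S}{\tau})\tr_g\omega_0\ge(1-\tfrac{S}{\tau})Q^{-1}\Theta$, and the net favourable coefficient $PQ^{-1}=\sigma^{-1}$ beats all error terms once $\sigma$ is small. This yields $\tr_hg_\e\le C\exp(C'\rho)$, hence uniform bounds on compact sets; the lower bound then follows from the determinant bound $\dot\phi_\e\ge -C-\sigma\rho$ and the upper trace bound, not from the $(n-1)L(\tau-t)\ge1$ trick.
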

Note that $g_0$ may still be complete. But it may not be equivalent to $h$ and { the curvature of $g_0$ may be unbounded}.

As before, $g(t)$  is the limit of solutions $g_i(t)$ of the unnormalized Chern-Ricci flow on $M\times[0, s)$ with initial data $g_0+\e_i h$ with $\e_i\to 0$. Here we may assume $s\leq 1$.   We want to prove the following:

\begin{lma}\label{l-initial-CR-1}   With the same notation  and assumptions as in Proposition \ref{p-initial-CR} and let $S<\tau:=e^s-1$, for any precompact open subset $\Omega$ of $M$, there is $C>0$,
 $$
 C^{-1}h\le  g_i(t)\le Cg
 $$
 for all $i$  in $\Omega\times[0, S]$.
\end{lma}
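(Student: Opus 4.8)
The plan is to prove the slightly stronger statement $C^{-1}h\le g_i(t)\le Ch$ on $\Omega\times[0,S]$ with $C$ depending only on $n,K,\beta,\|f\|_\infty,s,S$ and on $\Omega$ but not on $i$; the bound $g_i(t)\le Cg(t)$ then follows by letting $i\to\infty$ in the lower bound, which gives $g(t)\ge C^{-1}h$, hence $h\le Cg(t)$, on $\Omega\times[0,S]$. In a local frame unitary for $h$ that diagonalizes $g_i(t)$ with eigenvalues $\lambda_1,\dots,\lambda_n>0$ one has $\tr_h g_i=\sum_k\lambda_k$ and $\tr_{g_i}h=\sum_k\lambda_k^{-1}$, so it suffices to establish, uniformly in $i$, the two trace bounds $\tr_h g_i(t)\le C$ on $M\times[0,S]$ and $\tr_{g_i(t)}h\le C$ on $\Omega\times[0,S]$. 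I would use the following inputs, all uniform in $i$: $g_i(t)$ is the unnormalized Chern--Ricci flow with $g_i(0)=g_0+\e_i h\le 2h$ and potential $\phi_i$ with $\phi_i(0)=0$; by Lemmas \ref{l-uudot-upper-1}, \ref{l-all-u} and \ref{slma-1} (none of which uses that $h$ is \K) $|\phi_i|\le C_0$ and $\dot\phi_i\le C_0$ on $M\times[0,S]$; via condition (b) and the rescaling of Appendix A there is a function $v_i$ on $M\times[0,S]$ with $|v_i|\le C_0$, with $v_i(\cdot,0)$ bounded uniformly in $i$ (since $\phi_i(0)=0$), and with $\heat v_i\ge\beta'\tr_{g_i}h-C'$ for constants $\beta',C'>0$ independent of $i$ — this is the function ``$v$'' from the proof of Lemma \ref{l-trace-2}, transported to the unnormalized flow; finally $\tr_{g_0}h=o(\rho)<\infty$ everywhere, so $g_0>0$ and $\tr_{g_0}h$ is bounded on every set $\{\rho\le R\}$.

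Both trace bounds come from one maximum-principle argument. Write $w$ for either $\tr_h g_i$ or $\tr_{g_i}h$. By Lemma \ref{l-a-1}, applied with background $h$ and initial metric $g_0+\e_i h$ (whose torsion and first $\bar\partial$-covariant derivative are $h$-bounded by the hypotheses on $T_h,T_{g_0}$) and using $\mathrm{BK}_h\ge-K$, one has $\heat\log w\le C_1\tr_{g_i}h$ modulo a gradient term. For $L,\kappa>0$ I would set $F=\log w-Lv_i-\kappa\rho$ on $M\times[0,S]$. Since $|\partial\rho|_h^2+|\ddb\rho|_h\le K$ one has $\heat(\kappa\rho)\ge-\kappa K\tr_{g_i}h$, so with $L\beta'\ge C_1+1$ and $\kappa\le(2K)^{-1}$,
\[
\heat F\le-\tfrac12\tr_{g_i}h+LC'
\]
modulo a gradient term that at a critical point is absorbed into $-\tfrac12\tr_{g_i}h$ exactly as in the proof of Lemma \ref{l-trace-2} (using, if needed, an auxiliary term $1/v_i$ in $F$, with $v_i$ shifted so that $v_i\ge1$). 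Because $-\kappa\rho\to-\infty$ at infinity and $F$ is bounded for each fixed $i$, the maximum principle Lemma \ref{max} gives
\[
\sup_{M\times[0,S]}F\le\max\Big\{\sup_{M\times\{0\}}F,\ \log C_3+L\sup|v_i|\Big\},
\]
where $C_3$ bounds $w$ at an interior maximum of $F$: there $\tr_{g_i}h\le2LC'$, whence $\tr_{g_i}h\le C_3$ directly when $w=\tr_{g_i}h$, and when $w=\tr_h g_i$ the arithmetic--geometric mean inequality together with $\det g_i/\det h=e^{\dot\phi_i}\le e^{C_0}$ gives $\tr_h g_i\le n(\tr_{g_i}h)^{n-1}e^{C_0}\le C_3$.

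It remains to bound $\sup_{M\times\{0\}}F$. When $w=\tr_h g_i$, since $g_0\le h$ and $v_i(\cdot,0)$ is bounded uniformly in $i$, we get $\sup_{M\times\{0\}}F\le\log(2n)+L\sup|v_i|=C_4$, uniformly in $i$ and in $\kappa$; letting $\kappa\to0$ (legitimate since $\rho<\infty$ everywhere) yields $\tr_h g_i\le C_5$ on all of $M\times[0,S]$, uniformly in $i$. When $w=\tr_{g_i}h$, I would instead fix $R$ with $\bar\Omega\subset\{\rho<R\}$ and fix $\kappa=\kappa(R)\in(0,(2K)^{-1}]$ once and for all; since $\tr_{g_i(0)}h\le\tr_{g_0}h=o(\rho)<\infty$ everywhere, the function $\log\tr_{g_i(0)}h-\kappa\rho$ is bounded above on $M$ by a constant depending only on $\Omega$ and independent of $i$ (the sublinear growth is dominated by $-\kappa\rho$, and finiteness of $\tr_{g_0}h$ gives uniformity in $i$), so $\sup_{M\times\{0\}}F\le C_6(\Omega)$ and hence, on $\{\rho<R\}$, $\log\tr_{g_i}h\le\sup F+L\sup|v_i|+\kappa R\le C_7(\Omega)$. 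Thus $\tr_{g_i}h\le C_8(\Omega)$ on $\Omega\times[0,S]$, uniformly in $i$, and combined with $\tr_h g_i\le C_5$ this gives $C^{-1}h\le g_i(t)\le Ch$ on $\Omega\times[0,S]$.

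The hard part is the reverse trace $\tr_{g_i}h$ near $t=0$, which is also the reason the conclusion is local in $x$ (unlike the global trace estimates of Section \ref{s-aprior}): the Chern--Ricci flow has no Perelman-type distance-distortion estimate, so one cannot localize by moving $g_i(t)$-geodesic balls as in the \K case (Lemma \ref{local-bound}), and one is forced to localize with the fixed exhaustion function $\rho$. This is harmless here only because $\ddb\rho$ is $h$-bounded, so the extra factor $\tr_{g_i}h$ carried by $\heat(\kappa\rho)$ is absorbed by the good term $\beta'\tr_{g_i}h$ coming from condition (b); the price is the dependence of the constant on $\Omega$ through $\kappa$. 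The two technical points to be checked with care are the absorption of the torsion/gradient terms at the critical point (which, as in the proof of Lemma \ref{l-trace-2}, needs an auxiliary term such as $1/v_i$ in $F$) and the validity of the reverse-trace differential inequality $\heat\log\tr_{g_i}h\le C_1\tr_{g_i}h$ under the sole curvature hypothesis $\mathrm{BK}_h\ge-K$; both are supplied by Lemma \ref{l-a-1}.
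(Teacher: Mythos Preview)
There is a genuine gap in the existence of the barrier $v_i$ you postulate. You claim there is a function $v_i$ on $M\times[0,S]$ with simultaneously
\[
|v_i|\le C_0\quad\text{and}\quad\heat v_i\ge\beta'\,\tr_{g_i}h-C'
\]
for constants $C_0,\beta',C'>0$ independent of $i$. Neither natural candidate has both properties. If $v_i$ is the $v$ of Lemma~\ref{l-trace-2}, then the coefficient of $\tr_{g_i}h$ in the differential inequality is
\[
\frac{\beta(1-e^{-t})}{1-e^{-s}}+\e_i e^{-t},
\]
which equals $\e_i$ at $t=0$; there is no $\beta'>0$ independent of $i$. This is precisely why Lemma~\ref{l-trace-2}(ii) carries an $\e$--dependence. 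If instead $v_i=(\tau-t)\dot\phi_i+\phi_i-f+nt$ (the barrier used in Lemma~\ref{local-bound} and Sublemma~\ref{sl-initial-CR-1}), then indeed $\heat v_i\ge\beta\,\tr_{g_i}h$, but $|v_i|$ is \emph{not} uniformly bounded on $M$: $\dot\phi_i(\cdot,0)=\log\frac{(\omega_0+\e_i\theta_0)^n}{\theta_0^n}$ is only bounded below by $-n\log\tr_{g_0}h$, which under $\tr_{g_0}h=o(\rho)$ gives growth of order $-\log\rho$, not a constant. Thus your bound $\sup_{M\times\{0\}}F\le\log(2n)+L\sup|v_i|$ fails, and letting $\kappa\to0$ is illegitimate. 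A second, independent gap is the reverse-trace inequality: Lemma~\ref{l-a-1} treats $\log\tr_hg$, and its estimate uses $\mathrm{BK}_h\ge-K$; the inequality $\heat\log\tr_{g_i}h\le C_1\tr_{g_i}h$ that you invoke requires instead an \emph{upper} bound on the holomorphic bisectional curvature of $h$, which is not assumed.

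The paper's proof proceeds differently and in two steps that explicitly absorb these difficulties. First, Sublemma~\ref{sl-initial-CR-1} uses the barrier $-(\tau-t)\dot\phi_i-\phi_i+f-nt-\sigma\rho$ and condition (b) to obtain $\dot\phi_i\ge -C-\sigma\rho$ for every small $\sigma>0$, uniformly in $i$; the hypothesis $\tr_{g_0}h=o(\rho)$ enters through $\dot\phi_i(0)\ge-n\log\tr_{g_0}h$. Second, Sublemma~\ref{sl-initial-CR-2} does \emph{not} use a bounded barrier at all: from $\tr_{g_0}h=o(\rho)$ one extracts $g_0\ge Q(\rho)^{-1}h$ with $Q(\rho)=\sigma(\rho+C)$, and replaces the missing $\e_i\,\tr_{g_i}h$ by the term $\tr_{g_i}\omega_0\ge Q(\rho)^{-1}\tr_{g_i}h$, compensated by a weight $P(\rho)=\rho+C$ in the test function $\Phi(\xi)P(\rho)$. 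This yields only $\tr_hg_i\le C\exp(C'\rho)$, not a global bound, but that suffices on the precompact set $\Omega$; the lower bound $g_i\ge C^{-1}h$ on $\Omega$ then follows from $\tr_{g_i}h\le(\tr_hg_i)^{n-1}e^{-\dot\phi_i}$ together with the first step, without any direct differential inequality for $\tr_{g_i}h$.
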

Suppose the lemma is true, then Proposition \ref{p-initial-CR} will  follows from the local estimates in \cite{ShermanWeinkove2013} for Chern-Ricci flow. To prove the lemma, first we prove the following.

Let $\phi_i$ be the potential for $g_i$.
\begin{sublma}\label{sl-initial-CR-1} Suppose
$$\liminf_{\rho\to\infty}\rho^{-1}\log\frac{\omega_0^n}{\theta_0^n}\ge0.
$$
Then for any $\sigma>0$  (small enough independent of $i$), there is a constant $C>0 $ independent of $i$ such that
\bee
\dot\phi_i\ge -C -\sigma\rho
\eee
on $M\times[0, S]$.
\end{sublma}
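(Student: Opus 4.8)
The quantity $\dot\phi_i = \log\frac{\tilde\omega_i^n(t)}{\theta_0^n}$ satisfies a heat-type equation along the unnormalized Chern-Ricci flow, and the strategy is a maximum principle argument using a spatial barrier built from the exhaustion function $\rho$. First I would recall from Appendix A the evolution inequality for $\dot\phi_i$ along the unnormalized flow; up to the torsion terms which are controlled by $K$ via the hypotheses on $h$ and $g_0$ (hence also on $g_0+\e_i h$), one has a differential inequality of the schematic form $\heat \dot\phi_i \geq -\tr_{g_i}(\Ric(\theta_0)) - C \,\tr_{g_i} h$ or similar, together with the initial value $\dot\phi_i(0) = \log\frac{(\omega_0+\e_i\theta_0)^n}{\theta_0^n} \geq \log\frac{\omega_0^n}{\theta_0^n}$. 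The point of the hypothesis $\liminf_{\rho\to\infty}\rho^{-1}\log\frac{\omega_0^n}{\theta_0^n}\geq 0$ is precisely to make the initial data $\geq -C - \tfrac{\sigma}{2}\rho$ for any $\sigma>0$.

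The core of the argument is to consider $F = \dot\phi_i + At + \tfrac12\sigma\rho + \kappa\rho$ (or a similar combination) on $M\times[0,S]$ with a small parameter $\kappa>0$, and to show $F \geq 0$ by contradiction. Suppose $\inf_{M\times[0,S]} F < 0$; since $\rho$ is a proper exhaustion, $F\to +\infty$ spatially (once $\sigma,\kappa>0$ are fixed), so the infimum is attained at some interior point $(x_0,t_0)$ with $t_0>0$ — here I need to also check the infimum is not forced onto $t=0$, which follows because the initial data is $\geq -C$ up to the $\sigma\rho$ term that has been inserted. At $(x_0,t_0)$ the first- and second-order conditions give $\partial_t F \leq 0$, $\n F = 0$, and the Laplacian term $\Delta F \geq 0$ in the appropriate weak sense. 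Plugging in the evolution inequality for $\dot\phi_i$, the bad terms $\tr_{g_i} h$ and $\tr_{g_i}(\Ric(\theta_0))$ are handled using $|\partial\rho|^2_h + |\ddb\rho|_h \leq K$ and $\Ric(\theta_0)\geq -L\theta_0$: one obtains at $(x_0,t_0)$ an inequality of the form $0 \geq \partial_t F \geq A - C(n,K)(1 + \kappa + \sigma)$, which is a contradiction once $A$ is chosen large (depending only on $n,K$ and not on $i$ or $\kappa$). Letting $\kappa\to 0$ then yields $\dot\phi_i \geq -AS - \tfrac12\sigma\rho \geq -C - \sigma\rho$ on $M\times[0,S]$.

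The main obstacle I anticipate is twofold. First, making the evolution inequality for $\dot\phi_i$ clean enough: along the Chern-Ricci flow the torsion of the fixed metric $h$ enters, and one must be careful that all such terms are bounded uniformly in $i$ — this uses hypothesis (3) on $h$ and (a) on $g_0$, together with the observation that $g_0+\e_i h$ satisfies the same torsion bounds up to a controlled constant for $\e_i\in(0,1)$. Second, one needs the a priori two-sided bound $\tilde C_i^{-1} h \leq g_i(t) \leq \tilde C_i h$ (possibly with $\tilde C_i$ depending on $i$) from Lemma \ref{l-trace-2}(ii) and Lemma \ref{l-all-u}(iii), just to know $\dot\phi_i$ is a smooth bounded-below-on-compacta function so that the maximum principle of Appendix B applies; the quantitative lower bound independent of $i$ is then the content of this sublemma. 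The role of the extra $\sigma\rho$ term (rather than a constant) is exactly to absorb the possibly-$i$-dependent or possibly-growing initial behaviour of $\log\frac{\omega_0^n}{\theta_0^n}$ while keeping the barrier argument coercive at spatial infinity.
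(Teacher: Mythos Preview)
Your approach has a genuine gap. The evolution equation for $\dot\phi_i$ along the unnormalized flow is exactly
\[
\heat\dot\phi_i=-\tr_{g_i}\Ric(\theta_0)
\]
(there are no torsion terms here). To run your barrier $F=\dot\phi_i+At+\tfrac\sigma2\rho+\kappa\rho$ you need a \emph{lower} bound on $-\tr_{g_i}\Ric(\theta_0)$, i.e.\ an \emph{upper} bound on $\Ric(\theta_0)$, together with a uniform upper bound on $\tr_{g_i}h$. Neither is available: the standing assumption is only $\mathrm{BK}_h\ge -K$ (a lower bound), and the metric equivalence you invoke from Lemma~\ref{l-trace-2}(ii) depends on $\e_i$. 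The inequality ``$\Ric(\theta_0)\ge -L\theta_0$'' you cite points the wrong way. Consequently the claimed ``$0\ge\partial_t F\ge A-C(n,K)(1+\kappa+\sigma)$'' cannot be obtained, and choosing $A$ large does not yield a contradiction independent of $i$.

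The paper's proof avoids this by not working with $\dot\phi_i$ alone but with the combination $(\tau-t)\dot\phi_i+\phi_i$, whose heat operator produces
\[
\tau\,\tr_{g_i}\Ric(\theta_0)+n-\tr_{g_i}\omega_i(0).
\]
Now the structural hypothesis (b), in the form $\omega_0-\tau\Ric(\theta_0)+\ii\ddbar f\ge\beta\theta_0$, converts $\tau\,\tr_{g_i}\Ric(\theta_0)-\tr_{g_i}\omega_i(0)$ into $\Delta f-\beta\,\tr_{g_i}\theta_0$; including $f$ in the test function absorbs $\Delta f$, and the remaining $-\beta\,\tr_{g_i}\theta_0$ has a good sign with no need for any upper curvature bound or $i$-uniform metric equivalence. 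Then $\sigma\rho$ is used only to get $\sup F|_{t=0}\le C(\sigma)$ from the hypothesis $\liminf_{\rho\to\infty}\rho^{-1}\log(\omega_0^n/\theta_0^n)\ge0$. If you want to repair your argument, you must build these two ingredients---the $(\tau-t)\dot\phi+\phi$ combination and the inclusion of $f$---into your barrier.
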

\begin{proof} In the following, we will denote $\phi_i$ simply by $\phi$ and $g_i(t)$ simply by $g(t)$ if there is no confusion arisen. Note that $g(t)$ is uniformly equivalent to $h$. Let $\sigma>0$.

Let $F=-(\tau-t)\dot \phi-\phi+f-nt-\sigma \rho $. By \eqref{e-a-1} and \eqref{e-a-2},  for $0\le t\le S$, we have
\bee
\begin{split}
\heat(-(\tau-t)\dot \phi-\phi)  =& (\tau-t)\tr_g\Ric(\theta_0)+\dot\phi-\dot\phi+\tr_g(\ii\ddbar \phi)\\
=&(\tau-t)\tr_g\Ric(\theta_0)+\lf(n+t\tr_g(\Ric(\theta_0))-\tr_g(\theta_0)\ri)\\
=&\tau\tr_g\Ric(\theta_0)+n-\tr_g\theta_0 \\
\end{split}
\eee
Hence by the fact that:
\bee
\omega_0-\tau\Ric(\theta_0)+\ii\ddbar f\ge \beta\theta_0,
\eee
we have
\bee
\begin{split}
\heat F\le &\tau\tr_g\Ric(\theta_0)-\tr_g\theta_0-\Delta f+\sigma \Delta \rho\\
\le& (-\beta+ \sigma C_1)\tr_g\theta_0\\
<&0
\end{split}
\eee
for some constant $C_1$ independent of $\sigma$ and $i$ for  $\sigma$ with $C_1\sigma<\beta$.
Since $F$ is bounded from above, by the maximum principle Lemma \ref{max}, we conclude that
$$
\sup_{M\times[0, S]}F\le \sup_{M\times\{0\}}F.
$$
At $t=0$,
$$
F=-\tau\dot\phi-\sigma\rho+f.
$$
By the assumption, we conclude that $F\le C(\sigma)$ at $t=0$. Hence we have
$$
F\le C(\sigma)
$$
on $M\times[0, S]$. Since $\phi, f$ are bounded, the sublemma follows.

\end{proof}

\begin{sublma}\label{sl-initial-CR-2} With the same notations as in Sublemma \ref{sl-initial-CR-1}. Suppose $\tr_{g_0}h=o(\rho)$. Then
$$
\tr_hg_i\le C\exp(C'\rho)
$$
on $M\times[0, S]$ for some positive constants $C, C'$ independent of $i$.

\end{sublma}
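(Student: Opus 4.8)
The plan is to derive a differential inequality of the form $\heat \log\tr_h g_i \le C\tr_{g_i} h + C$ and then to feed in the lower bound on $\dot\phi_i$ from Sublemma \ref{sl-initial-CR-1} together with the volume upper bound $\dot\phi_i+\phi_i=\log(\det g_i/\det h)\le C$ (Lemma \ref{slma-1}, in the unnormalized parametrization) to control $\tr_{g_i}h$ from below by $\tr_h g_i$ via the arithmetic-geometric mean inequality, exactly as in the proof of Lemma \ref{local-bound}. The point is that $-\dot\phi_i \le n\log\tr_{g_i}h$ and $\log\tr_h g_i \le \dot\phi_i + (n-1)\log\tr_{g_i}h$, so a lower bound $\dot\phi_i \ge -C-\sigma\rho$ on $M\times[0,S]$ translates into $\log\tr_{g_i} h \ge c\big(\log\tr_h g_i - C - \sigma\rho\big)$ for a dimensional constant $c>0$.

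First I would fix $\sigma>0$ small (to be chosen at the end depending only on the structure constant $K$ and $\beta$) and consider the test function
$$
F = \log\tr_h g_i - L\big((\tau-t)\dot\phi_i + \phi_i\big) - A\rho
$$
on $M\times[0,S]$, where $L,A>0$ are constants to be determined, independent of $i$. Using Lemma \ref{l-a-1} (with the torsion terms present but bounded by $K$ since $h$ is Hermitian, not necessarily \K here), one has $\heat \log\tr_h g_i \le C_1\tr_{g_i}h$, and from \eqref{e-a-1}, \eqref{e-a-2} one computes $\heat\big((\tau-t)\dot\phi_i+\phi_i\big) = \tau\tr_{g_i}\Ric(\theta_0) - n + \tr_{g_i}\theta_0 \ge (\beta - \text{err})\tr_{g_i}\theta_0 - n$ after invoking condition (b) in the form $\omega_0 - \tau\Ric(\theta_0) + \ddb f \ge \beta\theta_0$; and $\heat\rho = -\Delta_{g_i}\rho \ge -C_2\tr_{g_i}h$ by assumption (1). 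Choosing $L$ large enough that $L\beta$ dominates $C_1$, and $A$ so that $A C_2$ is absorbed, one gets $\heat F \le -\tr_{g_i}h + (\text{bounded terms})$ wherever $F$ is large, after using the A.M.-G.M. trick above to bound $\tr_{g_i}h$ below by $\exp(cF)$ up to controlled errors coming from the $\dot\phi_i$ and $\phi_i$ bounds.

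Since $F$ is \emph{not} a priori bounded above (because $g_i$ is only uniformly equivalent to $h$ with a constant depending on $\e_i$, and $\rho$ is unbounded), I would instead run the maximum principle for $F - \kappa\rho^2$ or simply apply the maximum principle Lemma \ref{max} noting that for fixed $i$ the metric $g_i(t)$ is uniformly equivalent to the complete metric $h$, so $F$ \emph{is} bounded above for each fixed $i$ — though the bound may depend on $i$, which is exactly what the maximum principle argument removes. At an interior maximum one gets $\tr_{g_i}h \le C$ there, hence $\log\tr_h g_i \le C + L\big((\tau-t)\dot\phi_i+\phi_i\big) + A\rho \le C' + A'\rho$ using Lemma \ref{slma-1} and Sublemma \ref{sl-initial-CR-1}; comparing with the value of $F$ at $t=0$, where $g_i(0)=g_0+\e_i h$ and $\tr_h g_i(0) \le \tr_h g_0 + n\e_i = o(\rho)$, one concludes $F\le C$ on all of $M\times[0,S]$ with $C$ independent of $i$. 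Unwinding, $\log\tr_h g_i \le C + L\big((\tau-t)\dot\phi_i + \phi_i\big) + A\rho \le C' + C''\rho$, which is the claim with $C' = C''$ renamed. The main obstacle is the bookkeeping of which constants are allowed to depend on $i$ and which are not: one must be careful that the final constant $C'$ in $\tr_h g_i \le C\exp(C'\rho)$ depends only on $n, K, \beta, \|f\|_\infty, S, \tau$ and the rate function in $\tr_{g_0}h = o(\rho)$, but not on $\e_i$ — this forces the choice of $\sigma$ and the exponent $A$ to be made before the maximum principle is applied, and forces us to use the $i$-independent bounds of Sublemma \ref{sl-initial-CR-1} and Lemma \ref{slma-1} rather than the $i$-dependent equivalence of $g_i$ with $h$.
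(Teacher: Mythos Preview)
There is a genuine gap. Your claim that $\heat\log\tr_h g_i \le C_1\tr_{g_i}h$ follows from Lemma~\ref{l-a-1} ``with the torsion terms present but bounded by $K$'' is not justified: term~$\mathrm{I}$ in Lemma~\ref{l-a-1} is a cross term involving $\hat\nabla_{\bar q}\Upsilon$, and it cannot be bounded by $C\,\tr_{g_i}h$ alone. This is precisely why, in the proof of Lemma~\ref{l-trace-2}, one inserts the concave piece $1/v$ into the test function, and why in Lemma~\ref{local-bound} (where the inequality you quote \emph{does} hold) the hypotheses that $h$ is \K and $d\omega_0=0$ force $T_0=\hat T=0$, so that $\mathrm{I}=\mathrm{III}=0$. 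In the setting of Sublemma~\ref{sl-initial-CR-2} the torsions need not vanish, and with your linear test function $F=\log\Upsilon-Lv-A\rho$ there is no $-|\nabla v|^2$ term available to absorb~$\mathrm{I}$ at a maximum: there $\hat\nabla\log\Upsilon=L\hat\nabla v+A\hat\nabla\rho$, and $\hat\nabla v$ involves $\hat\nabla\dot\phi$ and $\hat\nabla\phi$, neither of which is controlled a~priori.

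The paper's proof uses a substantially different barrier. It takes $\xi=\phi+nt-\tfrac{t}{\tau}f+C_1$ (based on $\phi$ alone, not $(\tau-t)\dot\phi+\phi$), sets $\Phi(\varsigma)=2-e^{-\varsigma}$ so that $\Phi''\le -e^{-C_2}<0$ on the range of~$\xi$, and considers $\log\Upsilon-\Phi(\xi)P(\rho)$. The strict concavity of $\Phi$ produces a good $+e^{-C_2}P|\nabla\xi|^2$ term in $\heat F$ which, via Cauchy--Schwarz, absorbs the contribution of~$\mathrm{I}$ at the maximum. Moreover the hypothesis $\tr_{g_0}h=o(\rho)$ is used not at $t=0$ (note also that you have confused $\tr_{g_0}h$ with $\tr_h g_0$; the latter is at most $n$ since $g_0\le h$) but in the interior of the argument, through the lower bound $\omega_0\ge Q(\rho)^{-1}\theta_0$ with $Q(\rho)=\sigma(\rho+C)$. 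This enters $\heat\xi$ via the term $(1-S/\tau)\tr_g\omega_0$, and the choice $P(\rho)=\rho+C$ is made so that $PQ^{-1}=\sigma^{-1}$ dominates the remaining $\Theta$-coefficients for $\sigma$ small.
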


\begin{proof} We will denote $g_i$ by $g$ again  and $\omega_{0}$ to be the \K form of the initial metric $g_i(0)=g_0+\e_ih$.
Note that
\bee
\begin{split}
\heat\phi=&\dot\phi-\Delta \phi\\
=&\dot \phi-(n-\tr_g\omega_{0}+t\tr_g(\Ric(\theta_0)))\\
\ge& \dot \phi-n+ \tr_g\omega_{0}+\frac{t\beta}{\tau}\tr_gh-\frac{t}{\tau}\tr_g\omega_0-\frac t\tau\Delta f\\
\ge &\dot \phi- n-\frac t\tau\Delta f+(1-\frac S\tau)\tr_g\omega_{0}.
\end{split}
\eee
Then we have:
\be\label{e-initial-CR-1}
\heat (\phi+ nt-\frac t\tau f)\ge    \dot\phi+(1-\frac S\tau)\tr_g\omega_{0}-C_0.
\ee
Since $|\phi|$ is bounded by a constant independent of $i$ on $M\times[0, S]$, see Lemma \ref{l-uudot-upper-1} and Lemma \ref{l-all-u}, there is a constant $C_1, C_2>0$ so that $\xi:=\phi +nt-\frac t\tau f+C_1\ge 1$ and $\xi\le C_2$ on $M\times[0, S]$. Here and below $C_j$ will denote positive constants independent of $i$.
Let $\Phi( \varsigma)=2-e^{-\varsigma}$ for $\varsigma\in \R$. Then for $\xi:=\phi +nt-\frac t\tau f+C_1\ge 1$, we have
\be\label{e-initia-CR-2}
\left\{
  \begin{array}{ll}
    \Phi(\xi)\ge  & 1 \\
   \Phi'(\xi)\ge   & e^{-C_2}\\
   \Phi''(\xi)\le &-e^{-C_2}
  \end{array}
\right.
\ee
on $M\times[0, S]$. Next, let $P(\varsigma)$ be a positive function on $\R$ so that $P'>0$.
Define
$$
F(x,t)=\Phi(\xi)P(\rho).
$$
Let $\Upsilon=tr_hg$, here $g=g_i$. Let $F\to\infty$ near infinity be a smooth function of $x, t$. Then by Lemma \ref{l-a-1}, we have
\bee
  \heat (\log \Upsilon-F)=\mathrm{I+II+III}-\heat F
  \eee
  where

\bee
\begin{split}
\mathrm{I}\le &2\Upsilon^{-2}\text{\bf Re}\lf(  h^{i\bar l} g^{k\bar q}( T_0)_{ki\bar l} \hat \nabla_{\bar q}\Upsilon\ri),
\end{split}
\eee

\bee
\begin{split}
\mathrm{II}=&\Upsilon^{-1} g^\ijb  h^{k\bar l}g_{k\bar q} \lf(\hat \nabla_i \ol{(\hat T)_{jl}^q}- \hat h^{p\bar q}\hat R_{i\bar lp\bar j}\ri),\\
\end{split}
\eee
and

\bee
\begin{split}
\mathrm{III}=&-\Upsilon^{-1} g^{\ijb}  h^{k\bar l}\lf(\hat \nabla_i\lf(\ol{( T_0)_{jl\bar k}} \ri) +\hat \nabla_{\bar l}\lf( {(  T_0)_{ik\bar j} }\ri)-\ol{ (\hat T)_{jl}^q}(  T_0)_{ik\bar q}^p  \ri).
\end{split}
\eee
Let $\Theta=\tr_gh$. Suppose $\log \Upsilon-F$ attains a positive maximum at $(x_0,t_0)$ with $t_0>0$, then at this point,
$$
\Upsilon^{-1}\hat\nabla \Upsilon=\hat\nabla F,
$$
and so
\bee
\begin{split}
\mathrm{I}\le &2\Upsilon^{-2}\text{\bf Re}\lf(  h^{i\bar l} g^{k\bar q}( T_0)_{ki\bar l} \hat \nabla_{\bar q}\Upsilon\ri)\\
\le &C\Upsilon^{-1}\Theta^\frac12|\nabla F|\\
\le&C'\Upsilon^{-1}\Theta^\frac12\lf(P|\nabla \xi|+P'\Theta^\frac12\ri).
\end{split}
\eee
because $|\p\rho|_h$ is bounded.  Here we use the norm with respect to the evolving metric $g(t)$.
%
%
$$
\mathrm{II}\le C\Theta,
$$
$$
\mathrm{III}\le C\Upsilon^{-1}\Theta.
$$
Here $C, C'$ are positive constants independent of $i$. On the other hand,
\bee
\begin{split}
&\heat F\\
=&P\heat\Phi+2{\bf Re}\left(g^\ijb\p_i\Phi\p_{\bar j}P\right)+\Phi\heat P\\
\ge&P\lf(\Phi'\heat \xi -\Phi''|\nabla\xi|^2\ri)-C_4\Phi'P'\Theta^\frac12|\nabla\xi|-C_4\Theta\Phi (P'+|P''|)\\
\ge&P\Phi'\dot\phi+e^{-C_2}P(1-\frac S\tau)\tr_g\omega_0-C_0P+e^{-C_2}P|\nabla\xi|^2-\frac12e^{-C_2}P|\nabla\xi|^2
\\
&-C_5\frac{(P')^2}{P}\Theta-C_4\Theta(P'+|P''|).
\end{split}
\eee
Here we have used the fact that $|\p\rho|_h, |\ddbar\rho|_h$ are bounded  $\Phi(\xi)\le 2$ and \eqref{e-initia-CR-2}.

So at  $(x_0,t_0)$,
\bee
\begin{split}
 &\heat (\log \Upsilon-F)\\
 \le& C_3\lf( \Upsilon^{-1}\Theta^\frac12\lf(P|\nabla \xi|+P'\Theta^\frac12\ri) + \Upsilon^{-1}\Theta+ \Theta\ri)\\
 &- P\Phi'\dot\phi-e^{-C_2}P(1-\frac S\tau)\tr_g\omega_0+C_0P-\frac12e^{-C_2}P|\nabla\xi|^2\\
 &+\Theta\lf(C_5\frac{(P')^2}{P}+
 C_4(P'+|P''|)\ri)\\
 \le&- P\Phi'\dot\phi-e^{-C_2}P(1-\frac S\tau)\tr_g\omega_0+C_0P+\lf(-\frac12e^{-C_2} +\Upsilon^{-1}\ri)P|\nabla\xi|^2
\\
&+C_6\Theta\lf(\Upsilon^{-1}+1+\Upsilon^{-1}P'+P'+\Upsilon^{-1}P+\frac{(P')^2}{p}+|P''|\ri).
 \end{split}
\eee
Now
\bee
-\dot\phi\le c(n)\log \Theta.
\eee
Suppose $\omega_0\ge \frac1{Q(\rho)}\theta_0$ with $Q>0$ and suppose $\Upsilon^{-1}\le \frac12e^{-C_2}$ at $(x_0,t_0)$, then at $(x_0,t_0)$, we have
\bee
\begin{split}
&\heat (\log \Upsilon-F)\\
\le&C_7P(\log\Theta+1)+\Theta\lf[-C_8PQ^{-1}+C_9 \lf( 1+ P'+\frac{(P')^2}{P}+|P''|\ri)\ri].
\end{split}
\eee
By the assumption on $\tr_{g_0}h$, for any $\sigma>0$ there is $\rho_0>0$ such that if $\rho\ge \rho_0$, then $\tr_{g_0}h\le \sigma\rho.$ Hence we can find $C=C(\sigma)$ such that
$$
g_0\ge \frac{1}{\sigma(\rho+C(\sigma))}h
$$
and $\rho+C(\sigma)\ge 1$
on $M$. Let $Q(\rho)=\sigma(\rho+C(\sigma)), P(\rho)=\rho+C(\sigma)$, then above inequality becomes
\bee
\begin{split}
 \heat (\log \Upsilon-F)
\le&C_7P\log (e\Theta)+\Theta\lf(-C_8\sigma^{-1}+3C_9\ri)\\
\le& C_7P\log (e\Theta)-\frac12C_8\Theta
\end{split}
\eee
if we choose $\sigma$ small enough independent of $i$. Since  $\log\Upsilon-F\to-\infty$ near infinity and uniform in $t\in [0, S]$, and $\log\Upsilon-F<0$ at $t=0$, by maximum principle, either $\log\Upsilon-F\le 0$ on $M\times[0, S]$ or there is $t_0>0$, $x_0\in M$ such that
$\log\Upsilon-F$ attains a positive maximum at $(x_0,t_0)$. Suppose at this point $\Upsilon^{-1}\ge\frac12 e^{-C_2}$, then
$$
\log\Upsilon-F\le C_{10}.
$$
Otherwise, at $(x_0,t_0)$ we have
\bee
0\le C_7P\log (e\Theta)-\frac12C_8\Theta.
\eee
Hence we have at this point $\Theta\le C_{11}$ which implies $\Upsilon\le C_{12}$ because $\dot\phi\le C$ for some constant independent of $i$. So
$$
\log\Upsilon-F\le\log C_{12}.
$$
Or
$$
\Theta\le C_{13}P^2.
$$
This implies $\log \Upsilon\le C_{14}(1+\log P)$. Hence
\bee
\log \Upsilon-F\le C_{14}.
\eee
From these considerations, we conclude that the sublemma is true.
\end{proof}
\begin{proof}[Proof of Lemma \ref{l-initial-CR-1}] The lemma follows from Sublemmas \ref{sl-initial-CR-1} and \ref{sl-initial-CR-2}.
\end{proof}

\section{Long time behaviour and convergence}

In this section, we will first study the longtime behaviour for the solution constructed in Theorem \ref{t-instant-complete}. Namely, we will show the following theorem:
\begin{thm}\label{longtime}
Under the assumption of Theorem \ref{t-instant-complete}, if in addition, $$-\Ric(h)+\ddb f\geq \beta \theta_0$$ for some $f\in C^\infty(M)\cap L^\infty(M)$, $\beta>0$. Then the solution constructed from Theorem \ref{t-instant-complete} is a longtime solution and converges to a unique complete \K Einstein metric with negative scalar curvature on $M$.
\end{thm}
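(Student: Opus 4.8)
The plan is to run the flow for all time and let $t\to\infty$.

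\emph{Long-time existence.} The extra hypothesis $-\Ric(h)+\ddb f\ge\beta\theta_0$ together with $\omega_0\ge0$ gives, for every $s>0$,
$$-\Ric(\theta_0)+e^{-s}\bigl(\omega_0+\Ric(\theta_0)\bigr)+(1-e^{-s})\ddb f=(1-e^{-s})\bigl(-\Ric(\theta_0)+\ddb f\bigr)+e^{-s}\omega_0\ge(1-e^{-s})\beta\,\theta_0,$$
so hypothesis (b) of Theorem~\ref{t-instant-complete} holds with $(f,\beta)$ replaced by $\bigl((1-e^{-s})f,(1-e^{-s})\beta\bigr)$ for \emph{every} $s>0$. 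Running the construction of Theorem~\ref{t-instant-complete} with $s$ arbitrarily large (for each fixed $\e,\rho_0$, Lemma~\ref{l-perturbed-1} already produces a solution on $M\times[0,\infty)$, and one takes the diagonal limit in $\rho_0$ and $\e$) yields a solution $u$ of \eqref{e-MP-1}, hence a solution $\omega(t)=\a+\ii\ddbar u$ of \eqref{e-NKRF}, on all of $M\times(0,\infty)$, still with $u(t)\to0$ as $t\to0$.

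\emph{Time-independent estimates.} The constants in the estimates of Section~\ref{s-aprior} have the shape $C(n,K,\beta_s,\|f_s\|_\infty,s_0,s_1,s)$ with $\beta_s=(1-e^{-s})\beta$, $f_s=(1-e^{-s})f$ from the first step. Taking $s$ large with $s_1=s/2$, one has $\beta_s\to\beta$, $\|f_s\|_\infty\to\|f\|_\infty$, while the quantities $E=\tfrac{1+s_1e^{s_1-s}}{(1-e^{-s})(1-e^{s_1-s})}$ and $\tfrac{1+s_1e^{s_1-s}}{1-e^{s_1-s}}$ appearing in Lemma~\ref{l-all-u}(iv),(v) and Lemma~\ref{l-trace-2}(i) tend to $1$ because $s_1e^{s_1-s}=\tfrac{s}{2}e^{-s/2}\to0$, and for $t\ge1$ one has $1/(1-e^{-t})\le1/(1-e^{-1})$. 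Hence Lemma~\ref{l-uudot-upper-1}, Lemma~\ref{l-all-u}(iv),(v), Lemma~\ref{l-trace-2}(i) and Corollary~\ref{eq-g}, applied in the limit $\e\to0$, give a constant $C$ \emph{independent of $t$} with $|u|\le C$, $|\dot u+u|\le C$ and $C^{-1}h\le g(t)\le Ch$ on $M\times[1,\infty)$. Feeding this into the interior higher-order estimates for the Chern--Ricci flow \cite{ShermanWeinkove2013} bounds all $h$-covariant derivatives of $g(t)$ on $M\times[2,\infty)$, locally uniformly on $M$.

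\emph{Convergence.} By \eqref{e-udot-1}, $\heat(e^t\dot u)=-\tr_\omega\bigl(\Ric(\theta_0)+\omega_0\bigr)\le nK\,\tr_g h\le C$ on $M\times[1,\infty)$, using $\mathrm{BK}_h\ge-K$ (so $\Ric(\theta_0)\ge-nK\theta_0$), $\omega_0\ge0$ and the previous step; since $\dot u$ is bounded and $g(t)$ is complete for $t>0$, the maximum principle Lemma~\ref{max} applied to $e^t\dot u-Ct$ gives $\dot u\le\e(t):=C(1+t)e^{-t}$ on $M\times[1,\infty)$, and $\int_1^\infty\e(t)\,dt<\infty$. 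Since $u$ is uniformly bounded and $\dot u\le\e(t)$, the function $t\mapsto u(x,t)-\int_1^t\e(\tau)\,d\tau$ is nonincreasing and bounded, so $u(x,t)$ converges as $t\to\infty$; with the $C^\infty_{\mathrm{loc}}$ bounds and Arzel\`a--Ascoli, $u(t)\to u_\infty$ in $C^\infty_{\mathrm{loc}}(M)$, hence $\omega(t)\to\omega_\infty:=-\Ric(\theta_0)+\ii\ddbar u_\infty$ in $C^\infty_{\mathrm{loc}}$ with $C^{-1}h\le\omega_\infty\le Ch$. As $\dot u(t)=\log(\omega(t)^n/\theta_0^n)-u(t)$ converges while $u$ converges, the limit of $\dot u$ is $0$; thus $u_\infty=\log(\omega_\infty^n/\theta_0^n)$, i.e.\ $\Ric(\omega_\infty)=\Ric(\theta_0)-\ii\ddbar u_\infty=-\omega_\infty$, so the closed form $\omega_\infty$ is the \K form of a complete \KE metric with negative scalar curvature. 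Uniqueness follows from the maximum principle: if $\omega'$ is another such metric then $w=\log\bigl((\omega')^n/\omega_\infty^n\bigr)$ satisfies $(\omega_\infty+\ii\ddbar w)^n=e^w\omega_\infty^n$ with $\omega_\infty+\ii\ddbar w=\omega'>0$, and Lemma~\ref{max} forces $w\le0$ and $w\ge0$; hence $\omega'=\omega_\infty$, and by the above the whole flow converges to it.

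I expect the main obstacle to be the second step --- upgrading the a priori estimates of Section~\ref{s-aprior} to be uniform on $[1,\infty)$. This is exactly where the global hypothesis $-\Ric(h)+\ddb f\ge\beta\theta_0$ is needed, as opposed to the weaker single-value hypothesis (b) used for short time: it makes a \emph{fixed} positivity $\beta\theta_0$ available at every time instead of the shrinking amount $(1-e^{-s})\beta\theta_0$ available near the maximal existence time, which is precisely what keeps the constants from degenerating. Once that is in hand, the exponential decay of $\dot u$ and the convergence argument are comparatively soft, the only delicate point being the applicability of the maximum principle of Appendix~\ref{s-max} on the complete, possibly unbounded-curvature, manifolds $(M,g(t))$.
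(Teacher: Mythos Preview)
Your argument is essentially correct and tracks the paper's closely through long-time existence and the uniform metric equivalence on $[1,\infty)$; your tracking of how the constants in Lemmas~\ref{l-all-u}(iv),(v) and~\ref{l-trace-2}(i) stabilize as $s\to\infty$ with $s_1=s/2$ is exactly what the paper is using implicitly.

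The one place you take a genuinely different route is the convergence step. The paper proves a \emph{lower} bound $\dot u\ge -Ce^{-t/2}$ (Lemma~\ref{du-convergence}) by shifting the origin to $t=1$ so that the ``initial'' form $\omega(1)$ is uniformly equivalent to $h$, and then using $\heat(e^t\dot u-f)=-\tr_g(\Ric(h)+\omega(1))+\Delta f\ge -\tr_g\omega(1)\ge -C$; together with Lemma~\ref{l-uudot-upper-1} this makes $|\dot u|\le Ce^{-t/2}$, so $u$ is Cauchy in $t$. You instead re-derive the upper bound $\dot u\le C(1+t)e^{-t}$ (note this already follows from Lemma~\ref{l-uudot-upper-1}, so your computation with $\heat(e^t\dot u)$ is redundant) and combine it with the two-sided bound on $u$ to force pointwise convergence via monotonicity of $u-\int_1^t\e$; the $C^\infty_{\rm loc}$ bounds then upgrade this. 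Both approaches work; the paper's gives quantitative decay of $\dot u$ directly, while yours bypasses Lemma~\ref{du-convergence} entirely at the cost of a slightly less direct argument that $\dot u\to 0$.

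Your uniqueness sketch has a small gap: Lemma~\ref{max} is parabolic, and to apply any maximum principle to $w=\log\bigl((\omega')^n/\omega_\infty^n\bigr)$ you first need $w$ bounded, which amounts to knowing a priori that any complete K\"ahler--Einstein metric $\omega'$ with $\Ric(\omega')=-\omega'$ is uniformly equivalent to $\omega_\infty$ (equivalently to $h$). That is not free; it is exactly Yau's Schwarz lemma \cite{Yau1978}, which the paper cites (see also \cite[Proposition~5.1]{HLTT}). Once that is in hand your elliptic maximum-principle argument is fine.
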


Before we prove Theorem \ref{longtime}, let us prove a lower bound of $\dot u$ which will be used in the argument of convergence. Once we have uniform equivalence of metrics, we can obtain a better lower bound of $\dot{u}$.

\begin{lma}\label{du-convergence} Assume the solution constructed from Theorem \ref{t-instant-complete} is a longtime solution, then   there is a positive constant $C$   such that  \bee
\dot{u}\geq-Ce^{-\frac t2} \eee on $M\times[2, \infty)$.
\begin{proof} Since we do not have upper bound of $g(t)$ as $t\to 0$, we shift the initial time of the flow to $t=1$. Note that \bee\begin{split}
\heat (e^t\dot{u}-f)=&-tr_{ g}(\Ric(h)+g(1))+\Delta f\\
\geq&-tr_{ g}g(1)\geq-C_1. \end{split}\eee

Consider $Q=e^t\dot{u}-f+(C_1+1)t$. Then we can use maximum principle argument as before to obtain $Q(x, t)\geq \inf\limits_MQ(0)$. Then we have\bee
e^t\dot{u}\geq -C_2-(C_1+1)t \eee which implies \bee
\dot{u}\geq -C_3e^{-\frac t2} \eee on $M\times[1, \infty)$. We shift the time back, we obtain the result.

\end{proof}
\end{lma}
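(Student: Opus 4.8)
The plan is to obtain the bound by a single maximum-principle estimate on $e^t\dot u$, using the new hypothesis $-\Ric(h)+\ddb f\ge\beta\theta_0$ to absorb the curvature term and the uniform equivalence of $g(t)$ with $h$ away from $t=0$ to control what is left. Since the one estimate that fails near $t=0$ is a lower bound for $g(t)$, I would work on $M\times[1,\infty)$ with $\{t=1\}$ as the initial slice. First note that $g(t)$ is uniformly equivalent to $h$ on $M\times[1,\infty)$: the upper bound $g(t)\le C_0h$ comes from Corollary \ref{eq-g} (whose constant, read through \eqref{e-E}, stays bounded as the final time $s\to\infty$, since there $E\to1$ and $-\log(1-e^{-s})\to0$), and then $\det g/\det h=e^{\dot u+u}$ is bounded above and below by Lemmas \ref{l-uudot-upper-1} and \ref{l-all-u}, which upgrades the upper bound to a two-sided bound $C_0^{-1}h\le g(t)\le C_0h$ on $M\times[1,\infty)$.

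Next I would compute, exactly as in \eqref{e-udot-1},
\[
\heat\big(e^t\dot u\big)=-\tr_g\big(\Ric(h)+\omega_0\big)=-\tr_g\Ric(h)-\tr_g\omega_0 ,
\]
valid classically for $t>0$. The hypothesis gives $-\tr_g\Ric(h)+\Delta f=\tr_g\big(-\Ric(h)+\ddb f\big)\ge0$, while $g(t)\ge C_0^{-1}h$ together with $\omega_0\le\theta_0$ gives $\tr_g\omega_0\le\tr_g h\le nC_0=:C_1$ on $M\times[1,\infty)$. Hence $\heat\big(e^t\dot u-f\big)\ge -C_1$, so $Q:=e^t\dot u-f+(C_1+1)t$ satisfies $\heat Q\ge1$. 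I would then apply the maximum principle of Appendix \ref{s-max} (Lemma \ref{max}) --- first to each approximation $u_\e$, where $\dot u_\e$ is genuinely bounded on $M\times[1,T]$, obtaining a bound independent of $\e$ and $T$, and then letting $\e\to0$ --- using that $\dot u_\e$ is bounded above (Lemma \ref{l-uudot-upper-1}) and $\dot u_\e$ is bounded below at $t=1$ (Lemma \ref{l-all-u} together with the boundedness of $u_\e$), all uniformly in $\e$. This yields $Q\ge -C_2$ on $M\times[1,\infty)$, i.e. $e^t\dot u\ge f-(C_1+1)t-C_2\ge -C_3(1+t)$, hence $\dot u\ge -C_3(1+t)e^{-t}\ge -Ce^{-t/2}$ because $(1+t)e^{-t/2}$ is bounded on $[0,\infty)$. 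Restricting to $M\times[2,\infty)$ gives the claim.

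The only point requiring care is the first step: that the equivalence $g(t)\simeq h$ holds with constants uniform over the whole infinite interval $[1,\infty)$, not merely on compact time intervals, which amounts to checking that none of the constants in the a priori estimates of Section \ref{s-aprior} degenerate as $s\to\infty$. Everything after that is the routine bookkeeping of the maximum-principle arguments already used for the short-time estimates.
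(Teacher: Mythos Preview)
Your argument is correct and follows essentially the same strategy as the paper's: establish $\heat(e^t\dot u - f)\ge -C_1$ on $M\times[1,\infty)$ using the hypothesis $-\Ric(h)+\ddb f\ge\beta\theta_0$ together with the uniform equivalence $g(t)\simeq h$, and then apply the maximum principle to $Q=e^t\dot u - f +(C_1+1)t$. The only cosmetic difference is that the paper literally shifts the flow so that $g(1)$ plays the role of the initial metric in the identity \eqref{e-udot-1} (giving the term $\tr_g g(1)$), whereas you keep the original potential and bound $\tr_g\omega_0$ directly via $\omega_0\le\theta_0$; your variant is in fact a bit cleaner, since it avoids the tacit step of transferring the bound on the shifted $\dot{\tilde u}$ back to the original $\dot u$.
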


\begin{proof}[Proof of Theorem \ref{longtime}] The assumption $-\Ric(h)+\ddb f\geq \beta \theta_0$ implies that for all $s$ large enough, \bee
-Ric(h)+e^{-s}(\omega_0+Ric(h))+\ddb \hat f\geq \frac\beta2 \theta_0. \eee Here $\hat f=(1-e^{-s})f$ is a bounded function on $M$. By Theorem \ref{t-instant-complete} and Lemma \ref{l-trace-2},  \eqref{e-MP-1} has a smooth solution on $M\times(0, \infty)$ with $g(t)$ uniformly equivalent to $h$ on any $[a, \infty)\subset(0, \infty)$. Combining the local higher order estimate of Chern-Ricci flow (See \cite{ShermanWeinkove2013} for example) and Lemma \ref{du-convergence}, we can conclude that $u(t)$ converges smoothly and locally to a smooth function $u_\infty$ as $t\to\infty$ and $\log\frac{\omega^n_\infty}{\theta_0^n}=u_\infty$. Taking $\ddb$ on both sides, we have \bee
-\Ric(g_\infty)+\Ric(h)=\ddb u_\infty, \eee which implies $-\Ric(g_\infty)=g_\infty$. Obviously, $g_\infty$ is K\"ahler.  Uniqueness follows from \cite[Theorem 3]{Yau1978} (see also Proposition 5.1 in \cite{HLTT}).

\end{proof}

Taking $g_0=h$ in the theorem, we have

\begin{cor} Let $(M,h)$ be a complete Hermitian manifold satisfying the assumptions in Theorem \ref{longtime}. Then the Chern-Ricci flow with initial data $h$ exists on $M\times[0,\infty)$ and converge uniformly on any compact subsets to the unique complete K\"ahler-Einstein metric with negative scalar curvature on $M$.
\end{cor}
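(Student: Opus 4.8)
The plan is to specialize Theorem~\ref{longtime} to $g_0=h$ (so that $\omega_0=\theta_0$) and then to invoke Proposition~\ref{p-initial-CR} to see that the normalized Chern-Ricci flow \eqref{e-NKRF} so produced actually attains the initial metric $h$ as $t\to0$. First I would check that $(M,h)$ together with the choice $g_0=h$ meets all the hypotheses of Theorem~\ref{t-instant-complete} and of Theorem~\ref{longtime}. Conditions (1)--(3) on $h$ are assumed. For condition (a): with $g_0=h$ one has $T_{g_0}=T_h$ and $\nabla^h g_0=\nabla^h h=0$ since the Chern connection of $h$ is $h$-metric, so (a) follows from condition (3) after possibly enlarging $K$. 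For condition (b): using the extra hypothesis $-\Ric(h)+\ddb f\ge\beta\theta_0$ of Theorem~\ref{longtime}, for any $s>0$ and with $\hat f:=(1-e^{-s})f\in C^\infty(M)\cap L^\infty(M)$ one computes
\begin{align*}
-\Ric(\theta_0)+e^{-s}\big(\omega_0+\Ric(\theta_0)\big)+\ddb\hat f
&=-(1-e^{-s})\Ric(\theta_0)+e^{-s}\theta_0+(1-e^{-s})\ddb f\\
&\ge (1-e^{-s})\beta\,\theta_0,
\end{align*}
so (b) holds with $\hat f$ and $\beta'=(1-e^{-s})\beta>0$; the remaining hypothesis of Theorem~\ref{longtime}, $-\Ric(h)+\ddb f\ge\beta\theta_0$, is given.

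With the hypotheses in place, Theorem~\ref{longtime} yields a longtime solution $u$ of \eqref{e-MP-1} on $M\times(0,\infty)$ with $u(t)\to0$ uniformly as $t\to0$, such that $\omega(t)=\a+\ii\ddbar u$ solves \eqref{e-NKRF} for $t>0$, is uniformly equivalent to $h$ on $M\times[a,\infty)$ for every $a>0$, and converges in $C^\infty_{\mathrm{loc}}$ as $t\to\infty$ to a complete \KE metric $g_\infty$ with $-\Ric(g_\infty)=g_\infty$, unique among complete \KE metrics of negative scalar curvature. The one step still needed is to identify the flow at $t=0$ with $h$ itself; since $u(t)\to0$ uniformly does not by itself force $\omega(t)\to\omega_0$, I would invoke Proposition~\ref{p-initial-CR} with $\omega_0=\theta_0$. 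Its growth hypothesis $\tr_{g_0}h=o(\rho)$ is automatic here because $\tr_h h\equiv n$ (and likewise $\liminf_{\rho\to\infty}\rho^{-1}\log(\omega_0^n/\theta_0^n)=0$), so $g(t)\to h$ in $C^\infty$ topology, uniformly on compact subsets of $M$, as $t\to0$. Combining, $g(t)$ solves \eqref{e-NKRF} on $M\times[0,\infty)$ with initial metric $h$ and converges uniformly on compact subsets to $g_\infty$, which is the assertion.

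The argument requires no new analytic ingredient, so the main obstacle is only in the bookkeeping: one must check that the choice $g_0=h$ does not destroy condition (a) (which rests on $\nabla^h h=0$) and that, for a genuinely Hermitian $h$, the short-time attainment of the initial data must be routed through Proposition~\ref{p-initial-CR} rather than Theorem~\ref{t-initial-Kahler-1}; the latter is fine since the relevant growth condition is trivially satisfied when $g_0=h$. Uniqueness of the limiting metric is inherited directly from Theorem~\ref{longtime} (ultimately from \cite[Theorem 3]{Yau1978}).
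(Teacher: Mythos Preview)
Your proof is correct and follows the paper's own approach, which is simply to take $g_0=h$ in Theorem~\ref{longtime}. The paper's proof is literally the one line ``Taking $g_0=h$ in the theorem, we have'', so your verification of hypotheses (a) and (b) and your invocation of Proposition~\ref{p-initial-CR} to show $g(t)\to h$ as $t\to0$ are additional details the paper leaves implicit.

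The only substantive difference is your explicit treatment of the initial condition. Since the Corollary asserts existence on $M\times[0,\infty)$ rather than $M\times(0,\infty)$, one does need to know that the flow produced by Theorem~\ref{t-instant-complete} actually attains $h$ at $t=0$; your route through Proposition~\ref{p-initial-CR} (with $\tr_hh=n=o(\rho)$) is a clean way to handle this in the genuinely Hermitian case, where Theorem~\ref{t-initial-Kahler-1} is not available. The paper presumably regards this as clear because when $\omega_0=\theta_0$ the approximating flows $g_\e(t)$ start from $(1+\e)h$ and the estimates of Section~\ref{s-aprior} are already uniform down to $t=0$, but your argument makes the point cleanly and without relying on that inspection.
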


For \KR flow, we have the following general phenomena related to Theorem \ref{longtime}.
\begin{thm}\label{convergence-krf}
Let $(M,h)$ be a smooth complete Hermitian manifold with
$\mathrm{BK}(h) \geq -K_0$ and $|\nabla^h_{\bar\partial}T_h|_h\leq K_0$ for some constant $K_0\geq 0$. Moreover, assume \bee -\Ric(h)+\ddb f \geq k h \eee
for some constant $k>0$ and function $f\in C^\infty(M)\cap L^\infty(M)$. Suppose $g(t)$ is a smooth complete solution to the normalized \KR flow on $M\times[0,+\infty)$ with $g(0)=g_0$ which satisfies  \bee
\frac{\det g_0}{\det h}\leq \Lambda \eee and \bee R(g_0)\geq -L \eee for some $\Lambda,L>0$. Then $g(t)$ satisfies \bee
C^{-1}h\leq  g(t)\leq C h \eee on $M\times[1, \infty)$ for some constant $C=C(n, K_0, k, ||f||_\infty, \Lambda, L)>0$. In particular, $ g(t)$ converges to the unique smooth complete K\"ahler-Einstein metric with negative scalar curvature.
\end{thm}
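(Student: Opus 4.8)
\emph{Reduction.} The plan is first to reduce everything to the a priori bound $C^{-1}h\le g(t)\le Ch$ on $M\times[1,\infty)$: once this is in hand, the convergence to a \KE metric follows verbatim from the proof of Theorem~\ref{longtime}. Indeed, the local higher order estimates for the Chern-Ricci flow \cite{ShermanWeinkove2013}, together with the analogue of Lemma~\ref{du-convergence} (whose proof only needs that $g(1)$ is uniformly equivalent to $h$ and that $-\Ric(h)+\ddb f\ge0$), make the potential $u$ normalized by $u(0)=0$, $\dot u+u=\log(\det g(t)/\det h)$, converge smoothly on compact subsets to some $u_\infty$ with $\log(\omega_\infty^n/\theta_0^n)=u_\infty$; applying $\ddb$ gives $-\Ric(g_\infty)=g_\infty$, so $g_\infty$ is a complete \KE metric, whose uniqueness comes from \cite[Theorem~3]{Yau1978} (see also \cite{HLTT}). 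So from now on I would only work on the a priori estimate, controlling $w:=\log(\det g(t)/\det h)$, $\tr_hg$ and $\tr_gh$ by the maximum principle of Appendix~\ref{s-max}, which applies because the solution $g(t)$ is complete.

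\emph{Step 1: two-sided control of the volume ratio $w$.} For the upper bound I would pass to the unnormalized flow $\tilde g(\sigma)=e^tg(t)$, $\sigma=e^t-1$, $\tilde g(0)=g_0$, and use that $\partial_\sigma R(\tilde g)=\Delta R(\tilde g)+|\Ric(\tilde g)|^2\ge\Delta R(\tilde g)+\frac1nR(\tilde g)^2$; comparison with $y'=\frac1ny^2$ gives $R(\tilde g(\sigma))\ge\max\{-L,-n/\sigma\}$, hence $R(g(t))=e^tR(\tilde g(e^t-1))\ge -C(n,L)$ for $t\ge1$ and $R(g(t))\ge -Le^t$ on $[0,1]$; since $\partial_tw=-R(g(t))-n$ along \eqref{e-NKRF}, integrating in $t$ (the tail $\int^\infty n e^{-\tau}(1-e^{-\tau})^{-1}\,d\tau$ is finite) yields $w\le C_1(n,L,\Lambda)$ on $M\times[0,\infty)$. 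For the lower bound I would use the negativity hypothesis $-\Ric(\theta_0)+\ddb f\ge k\theta_0$ to compute $\heat w=-n-\tr_g\Ric(\theta_0)\ge k\tr_gh-\Delta f-n$, so that $\heat(w-f+nt)\ge k\tr_gh\ge kn\,e^{-w/n}$ by the arithmetic-geometric mean inequality on the eigenvalues of $g(t)$ relative to $h$; comparing $\phi:=w-f+nt$ (for which $e^{-w/n}=e^{-\phi/n}e^{-f/n}e^{t}$) against the ODE $y'=c\,e^te^{-y/n}$ forces $\phi(t)\ge n\log(\frac cn(e^t-1))$, i.e. $w(t)\ge n\log(\frac cn(1-e^{-t}))+\inf_M f$ for every $t>0$, and in particular $w\ge -C_2(n,k,\|f\|_\infty)$ on $M\times[\frac12,\infty)$.

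\emph{Step 2: trace estimate.} Since $g(t)$ is \K, the torsion terms of $g$ in the parabolic Schwarz inequality of Lemma~\ref{l-a-1} vanish, leaving $\heat\log\tr_hg\le C_0(n,K_0)\tr_gh$. I would fix $T>1$, pick a smooth increasing $\zeta(t)$ with $\zeta(\frac12)=0$, and on $M\times[\frac12,T]$ consider $F=\zeta(t)\log\tr_hg-A(w-f)$ with $A=(C_0+1)/k$; using $\heat(w-f)\ge k\tr_gh-n$ and $\zeta\le1$ this gives $\heat F\le\zeta'(t)\log\tr_hg-\tr_gh+An$. The bound $w\le C_1$ gives the pointwise inequality $\tr_gh\ge c_n(\tr_hg)^{1/(n-1)}$, so the term $-\tr_gh$ beats both $\zeta'\log\tr_hg$ and $An$ once $\tr_hg$ is large; and since $w-f$ is two-sidedly bounded on $[\frac12,\infty)$ (combining $w\le C_1$ and $w\ge -C_2$), a large value of $F$ forces $\tr_hg$ — hence $\tr_gh$ — to be large and $\heat F<0$, so $F$ has no large interior maximum and is controlled by its value at $t=\frac12$, where $\zeta$ and the bad initial datum drop out and $w-f$ is bounded. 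Letting $T\to\infty$ and using $\zeta(t)\ge\zeta(1)>0$ for $t\ge1$ gives $\tr_hg\le C$ on $M\times[1,\infty)$; together with $w\ge -C_2$ and the pointwise inequality $\tr_gh\le c_n(\tr_hg)^{n-1}e^{-w}$ this yields $\tr_gh\le C$ as well, so $C^{-1}h\le g(t)\le Ch$ on $M\times[1,\infty)$ with $C=C(n,K_0,k,\|f\|_\infty,\Lambda,L)$, as desired.

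\emph{Main obstacle.} The hard part will be making each of these maximum principle arguments rigorous on the complete non-compact $(M,h)$. Here $g_0$ is controlled only through the one-sided data $\det g_0/\det h\le\Lambda$ and $R(g_0)\ge -L$, with no curvature bound and no comparability to $h$, and $(M,h)$ is assumed neither to have bounded curvature nor an exhaustion function with bounded derivatives; consequently the auxiliary functions above are not a priori two-sidedly bounded near the initial time, and one cannot directly quote a compact-type maximum principle. I would run each comparison on slices $M\times[\epsilon,T]$, using completeness of $g(t)$ and the one-sided bounds already established (and, in Step~2, the lower bound for $w$) to verify the growth hypotheses of the maximum principle of Appendix~\ref{s-max}, and only then let $\epsilon\to0$. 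The two delicate points are the preservation of the scalar curvature lower bound along the unnormalized flow and the instantaneous generation of the lower bound for $w$ out of $\heat(w-f+nt)\ge kn\,e^{-w/n}$; everything else is bookkeeping with the arithmetic-geometric mean inequality and the parabolic Schwarz lemma.
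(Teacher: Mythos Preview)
Your overall strategy---control the log volume ratio $w=\dot u+u$ from both sides using the scalar curvature lower bound and the negativity hypothesis, then run a parabolic Schwarz argument for the trace---matches the paper's. You also correctly identify the main obstacle: justifying each maximum principle step on the noncompact $M$ with no a priori comparability between $g(t)$ and $h$ and no exhaustion-function hypothesis on $(M,h)$.

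The gap is in your proposed resolution of that obstacle. You plan to verify ``the growth hypotheses of the maximum principle of Appendix~\ref{s-max}'' on slabs $M\times[\epsilon,T]$. But Lemma~\ref{max} requires two things that are simply unavailable here: (i) a proper exhaustion $\rho$ on $(M,h)$ with $|\partial\rho|_h^2+|\ddb\rho|_h\le C_1$, which Theorem~\ref{convergence-krf} does \emph{not} assume; and (ii) a lower bound $g(t)\ge C_2^{-1}h$, which is exactly the conclusion you are trying to prove. Neither completeness of $g(t)$ nor the one-sided bounds on $w$ manufactures such a $\rho$, and the circularity in (ii) cannot be broken by shrinking $\epsilon$. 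So none of your three maximum principle applications---the preservation of $R(\tilde g)\ge\max\{-L,-n/\sigma\}$, the instantaneous lower bound for $w$, and the trace estimate---can be closed with Appendix~\ref{s-max}.

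The paper bypasses this by using a different, intrinsic maximum principle: at each step the differential inequality is put in the self-improving form $\heat F\le -cF^2$ (whenever $F>0$), and one localizes with a cutoff built from the \emph{evolving} distance $d_{g(t)}(p,\cdot)$, using Perelman's distance distortion lemma for the complete (K\"ahler-)Ricci flow $g(t)$. This is Chen's argument \cite[Proposition~2.1]{Chen2009} for the scalar curvature step and the variant \cite[Lemma~5.1]{HLTT} for the $\dot\phi$ lower bound and the trace; the paper even writes the normalized distortion inequality and the test function $t\cdot\phi\bigl(\tfrac{1}{Ar_0}[e^td_t(x_0,x)+\tfrac{5(n-1)e^t}{3}r_0^{-1}]\bigr)\cdot G$ for the last step. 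This technique needs only completeness of $g(t)$, not any exhaustion on $(M,h)$ or any a priori comparison $g(t)\ge C^{-1}h$, which is precisely what makes the theorem go through under its weak hypotheses. Your ODE comparison for $w$ is morally equivalent to the paper's $F=-\dot\phi+f+n\log s$, but it is the reformulation $\heat F\le -cF^2$ that plugs directly into this localized argument.
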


\begin{proof} We can assume $k=1$, otherwise we rescale $h$. We consider the corresponding unnormalized K\"ahler-Ricci flow $\wt g(s)=e^{t}g(t)$ with $s=e^t-1$. Then the  corresponding Monge-Amp\`ere equation to the unnormalized  K\"ahler-Ricci flow is: \bee\left\{\begin{array}{l}
      \frac{\p}{\p s}\phi=\displaystyle{\log\frac{(\omega_0-s\Ric(\theta_0)+\ddb\phi)^n}{\theta_0^n}}  \\
        \phi(0)=0.
\end{array} \right. \eee

Here $\theta_0$ is the \K form of $h$. By the assumption $R(g_0)\geq -L $, Proposition 2.1 in \cite{Chen2009} and Lemma 5.1 in \cite{HLTT} with the fact \bee
\sheat \wt R\geq \frac{1}{n}\wt R^2,\eee we conclude that $\wt R:=R(\wt g(s))\geq \max\{-L, -\frac ns\}$ on $M\times[0, \infty)$. Note that $\ddot{\phi}=-R(\wt g(s))$, we have on $M\times[0, 1]$, $\dot{\phi}\leq C(L, \Lambda)$; on $M\times[1, \infty)$, $\dot{\phi}\leq C(L, \Lambda)+n\log s$.

For lower bound of $\dot{\phi}$, we consider $Q=-\dot{\phi}+f$. We compute: \bee\begin{split}
\sheat Q=&-\sheat \dot{\phi}-\Delta f \\
=&tr_{\wt g}[\Ric(\theta_0)-\ddb f]\\
\leq&-tr_{\wt g}h\\
\leq&-ne^{-\frac{\dot{\phi}}{n}}\\
\leq&-ne^{\frac{1}{n}(Q-f)}\\
\leq&-C(n, ||f||_\infty)e^{\frac{Q}{n}}\\
\leq&-C(n, ||f||_\infty)Q^2, \end{split}\eee whenever $Q>0$.

Then by the same argument as in the proof of Proposition 2.1 in \cite{Chen2009}, we conclude that $\dot{\phi}\geq -C(n, \lambda, ||f||_\infty)$ on $M\times[0, \infty)$. Here $\lambda$ is the lower bound of $\frac{\det g_0}{\det h}$. However, this estimate is not enough for later applications. We consider $F=-\dot{\phi}+f+n\log s$. Then we similarly obtain \bee
\sheat F\leq -C(n, ||f||_\infty)F^2, \eee whenever $F>0$. By Lemma 5.1 in \cite{HLTT}, we conclude that $F\leq\frac{C(n, ||f||_\infty)}{s}$ on $M\times[0, \infty)$. Therefore, we obtain \bee
\dot{\phi}\geq -C(n, ||f||_\infty)+n\log s \eee on $M\times[1, \infty)$.

To sum up, for the bound of  $\dot{\phi}$, we have:

(i) On $M\times[0, 1]$, $-C(n, \lambda, ||f||_\infty)\leq \dot{\phi}\leq C(L, \Lambda)$;

(ii) On $M\times[1, \infty)$, $-C(n, ||f||_\infty)+n\log s\leq \dot{\phi}\leq C(L, \Lambda)+n\log s$.

Then we consider back to the normalized K\"ahler-Ricci flow $g(t)$. Since \bee
\log\frac{\det g(t)}{\det h}=-n\log (s+1)+\frac{\p}{\p s}\phi(s), \eee where $s=e^t-1$, we obtain: \bee
-C(n, ||f||_\infty)\leq\dot{u}(t)+u(t)\leq C(L, \Lambda) \eee on $M\times[\log 2, \infty)$. Here $u$ solves \eqref{e-MP-1}.

Next, we consider $G(x, t)=\log tr_h g(t)-A(\dot{u}(t)+u(t)+f)$. Here $A$ is a large constant to be chosen. As in Section 1, we have \bee
\heat \log tr_h g(t)\leq C(n, K_0)tr_{ g(t)}h-1. \eee Therefore, \bee\begin{split}
\heat G\leq& C(n, K_0)tr_{ g(t)}h-1+An+A(tr_{ g}\Ric(h)+tr_{ g}\ddb f)\\
\leq& (-A+C(n, K_0))tr_{g(t)}h-1+An\\
\leq& -tr_{g(t)}h+An. \end{split}\eee Here we take $A=C(n, K_0)+1$.

On the other hand, \bee tr_h g(t)\leq\frac{1}{(n-1)!}\cdot(tr_{ g(t)}h)^{n-1}\cdot\frac{\det g}{\det h}\leq C(n, L, \Lambda)(tr_{ g(t)}h)^{n-1}. \eee

Then we have \bee\begin{split}
\heat G\leq&-C(n, L, \Lambda)(tr_h g(t))^{\frac{1}{n-1}}+C(n,K_0)\\
=&-C(n, L, \Lambda)e^{\frac{1}{n-1}\log tr_hg(t)}+C(n,K_0)\\
=&-C(n, L, \Lambda)e^{\frac{1}{n-1}[G+A(\dot{u}(t)+u(t)+f)]}+C(n,K_0)\\
\leq&-C(n, L, \Lambda, ||f||_\infty)e^{\frac{1}{n-1}G}+C(n,K_0)\\
\leq&-C(n, L, \Lambda, ||f||_\infty)G^2+C(n,K_0), \end{split}\eee whenever $G>0$.

By similar argument as in the proof of Lemma 5.1 in \cite{HLTT}, we conclude that $G\leq C(n, L, \Lambda, ||f||_\infty, K_0)$ on $M\times[1, \infty)$. The difference here is that we consider the normalized K\"ahler-Ricci flow instead of K\"ahler-Ricci flow. The Perelman's distance distortion lemma for normalized K\"ahler-Ricci flow is the following:\bee
\heat d_t(x_0, x)\geq -\frac{5(n-1)}{3}r_0^{-1}-d_t(x_0, x). \eee We then consider $t\cdot\phi(\frac{1}{Ar_0}[e^t\cdot d_t(x_0, x)+\frac{5(n-1)e^t}{3}r_0^{-1}])\cdot G(x,t)$, the results follows from the same argument as in the proof of Lemma 5.1 in \cite{HLTT}.

This implies \bee
 g(t)\leq C(n, L, \Lambda, ||f||_\infty, K_0)h \eee  on $M\times[1, \infty)$.

For lower bound, combining with $e^{\dot{u}(t)+u(t)}=\frac{\det  g}{\det h}$, we have \bee
 g(t)\geq C^{-1}(n, L, \Lambda, ||f||_\infty, K_0)h \eee  on $M\times[1, \infty)$.

Once we obtain the uniform equivalence of metrics of the normalized K\"ahler-Ricci flow, the convergence follows from the same argument as in the proof of Theorem 5.1 in \cite{HLTT}. This completes the proof of Theorem \ref{convergence-krf}.

\end{proof}

\appendix
\section{Some basic relations}

Let $g(t)$ be a solution to the Chern-Ricci flow,
 $$
  \partial_tg=-\Ric(g)
 $$
 and $h$ is another Hermitian metric. Let $\omega(t)$ be the \K form of $g(t)$, $\theta_0$ be the \K form of $h$. Let
$$
\phi(t)=\int_0^t\log \frac{\omega^n(s)}{\theta_0^n}ds.
$$

\be\label{e-a-1}
\omega(t)=\omega(0)-t\Ric(\theta_0)+\ii\ddbar\phi.
\ee

Let $\dot\phi=\frac{\p}{\p t}\phi$. Then
\be\label{e-a-2}
\heat\dot\phi=-\tr_g(\Ric(\theta_0)),
\ee
where $ \Delta$ is the Chern Laplacian with respect to $ g$.

On the other hand, if $g$ is as above, the solution $\wt g$ of the corresponding  normalized Chern-Ricci flow with the same initial data
$$
\partial_t\wt g=-\Ric(\wt g)-\wt g
$$
is given by
$$
\wt g(x,t)=e^{-t}g(x,e^{t}-1).
$$
The corresponding potential $u$ is given by
$$
u(t)=e^{-t}\int_0^te^s\log \frac{\wt \omega^n(s)}{\theta_0^n}ds
$$
where $\wt \omega(s)$ is the \K form of $\wt g(s)$. Also,
\be\label{e-a-3}
\wt\omega(t)=-\Ric(\theta_0)+e^{-t}(\Ric(\theta_0)+\omega(0))+\ii\ddbar u.
\ee

\be\label{e-a-5}
\lf(\frac{\p}{\p t}-\wt \Delta\ri)(\dot u+u)=-\tr_{\wt g}\Ric(\theta_0)-n,
\ee
where $\wt \Delta$ is the Chern Laplacian with respect to $\wt g$.

\begin{lma}[See \cite{TosattiWeinkove2015,Lee-Tam}]\label{l-a-1} Let $g(t)$ be a solution to the Chern-Ricci flow and  let $\Upsilon=\tr_{  h}g$, and $\Theta=\tr_g  h$.

  \bee
  \heat \log \Upsilon=\mathrm{I+II+III}
  \eee
  where

\bee
\begin{split}
\mathrm{I}\le &2\Upsilon^{-2}\text{\bf Re}\lf(  h^{i\bar l} g^{k\bar q}( T_0)_{ki\bar l} \hat \nabla_{\bar q}\Upsilon\ri).
\end{split}
\eee
\bee
\begin{split}
\mathrm{II}=&\Upsilon^{-1} g^\ijb \hat h^{k\bar l}g_{k\bar q} \lf(\hat \nabla_i \ol{(\hat T)_{jl}^q}- \hat h^{p\bar q}\hat R_{i\bar lp\bar j}\ri)\\
\end{split}
\eee
and

\bee
\begin{split}
\mathrm{III}=&-\Upsilon^{-1} g^{\ijb}  h^{k\bar l}\lf(\hat \nabla_i\lf(\ol{( T_0)_{jl\bar k}} \ri) +\hat \nabla_{\bar l}\lf( {(  T_0)_{ik\bar j} }\ri)-\ol{ (\hat T)_{jl}^q}(  T_0)_{ik\bar q}^p  \ri).
\end{split}
\eee
where $T_0$ is the torsion of $g_0=g(0)$, $\hat T$ is the torsion of $h$ and $\hat\nabla$ is the derivative with respect to the Chern connection of $h$.

\end{lma}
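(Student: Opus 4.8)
Since Lemma~\ref{l-a-1} is an identity of smooth tensors, the plan is to verify it pointwise: fix $p\in M$ and $t>0$, work in holomorphic coordinates with $h_{i\bar j}(p)=\delta_{ij}$ (the first derivatives of $h$, which encode the torsion $\hat T$, are carried along explicitly), and compute $\heat\log\Upsilon$ term by term, where $\heat=\partial_t-\Delta$ and $\Delta$ is the Chern Laplacian of $g(t)$. The starting point is
\[
\heat\log\Upsilon=\Upsilon^{-1}\lf(\partial_t\Upsilon-\Delta\Upsilon\ri)+\Upsilon^{-2}|\partial\Upsilon|^2_{g}.
\]
For the time derivative, the Chern-Ricci flow equation gives $\partial_t\Upsilon=h^{i\bar j}\partial_tg_{i\bar j}=-h^{i\bar j}\Ric(g)_{i\bar j}$. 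For the spatial part, since the Chern connection $\hat\nabla$ of $h$ satisfies $\hat\nabla h\equiv0$ and $\Delta$ on a scalar equals $g^{k\bar l}\hat\nabla_k\hat\nabla_{\bar l}$, one gets $\Delta\Upsilon=h^{p\bar q}g^{i\bar j}\hat\nabla_i\hat\nabla_{\bar j}g_{p\bar q}$. Thus the whole computation reduces to manipulating $\hat\nabla$-covariant derivatives of the tensor $g_{i\bar j}$.

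The core is to process $-h^{i\bar j}\Ric(g)_{i\bar j}$. Since $\Ric(g)_{i\bar j}-\Ric(h)_{i\bar j}=-\partial_i\partial_{\bar j}\log(\det g/\det h)$ and $\partial_{\bar j}\log(\det g/\det h)=g^{p\bar q}\hat\nabla_{\bar j}g_{p\bar q}$, one obtains
\[
-h^{i\bar j}\Ric(g)_{i\bar j}=-h^{i\bar j}\Ric(h)_{i\bar j}+h^{i\bar j}g^{p\bar q}\hat\nabla_i\hat\nabla_{\bar j}g_{p\bar q}-|\hat\nabla g|^2,
\]
where $-|\hat\nabla g|^2\le0$ is a quadratic expression in $\hat\nabla g$. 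Subtracting $\Delta\Upsilon$, the two second-order blocks $h^{i\bar j}g^{p\bar q}\hat\nabla_i\hat\nabla_{\bar j}g_{p\bar q}$ and $-h^{p\bar q}g^{i\bar j}\hat\nabla_i\hat\nabla_{\bar j}g_{p\bar q}$ differ only by swapping the $h$- and $g$-contractions, so one reconciles them by commuting $\hat\nabla_i$ past $\hat\nabla_{\bar j}$ (which brings in the Chern curvature $\hat R$ of $h$) and by interchanging an unbarred and a barred index via $\hat\nabla_kg_{i\bar j}-\hat\nabla_ig_{k\bar j}=T(g)_{ki\bar j}-(\hat T)_{ki}{}^{p}g_{p\bar j}$ and its conjugate (which bring in the torsions $T(g)$ of $g$ and $\hat T$ of $h$). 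Here it is worth noting beforehand that along the Chern-Ricci flow $\partial\omega(t)$ is $t$-independent, because $\partial_t\omega=\ii\ddbar\log\det g$ and $\partial(\ii\ddbar\psi)=0$ for every function $\psi$; hence the torsion of $g(t)$ equals $T_0=T_{g_0}$ for all $t$, which is why only $T_0$ (and not a $t$-dependent torsion) shows up in I and III. Collecting: the second-order terms cancel; the curvature contributions from $-h^{i\bar j}\Ric(h)_{i\bar j}$ and from commuting derivatives, together with the $\hat\nabla\hat T$ terms, assemble into II; the $\hat\nabla T_0$ terms and the quadratic $\hat T\ast T_0$ term assemble into III; and the nonpositive quadratic $\hat\nabla g$ term above, together with $\Upsilon^{-2}|\partial\Upsilon|^2_g$ and the torsion$\,\ast\,\hat\nabla g$ terms, combine---via a Cauchy--Schwarz in which $T_0$ measures the failure of $\hat\nabla g$ to be symmetric in its two lower indices---into the single gradient term I (with its factor $2$ and the torsion $T_0$).

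I expect the difficulty to lie entirely in the bookkeeping: the non-K\"ahler setting produces many torsion terms of both metrics with delicate index positions, and the decisive point is that the first-order term $\Upsilon^{-2}|\partial\Upsilon|^2_g$ must not be discarded---it is exactly what absorbs most of the quadratic $\hat\nabla g$ contribution from the Ricci term, leaving precisely I. This is the Hermitian refinement of the classical K\"ahler inequality $\Delta\log\tr_hg\ge-C\,\tr_gh$ (in which $T(g)$ and $\hat T$ vanish and only the curvature term II survives). As the identity is already recorded in \cite{TosattiWeinkove2015} and \cite{Lee-Tam}, in the write-up I would either cite those computations directly or reproduce the few lines in a normal frame, invoking the remark above to identify the torsion tensor appearing in I and III with $T_0$.
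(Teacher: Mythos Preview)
Your outline is correct and follows the same computation recorded in \cite{TosattiWeinkove2015,Lee-Tam}, which is precisely what the paper invokes rather than reproving; in particular, your observation that $\partial\omega(t)$ is $t$-independent along the Chern--Ricci flow (so that $T(g(t))\equiv T_0$) is the key reason only $T_0$ appears, and your plan to absorb the quadratic $\hat\nabla g$ terms together with $\Upsilon^{-2}|\partial\Upsilon|^2_g$ via Cauchy--Schwarz into the single gradient term I is exactly the mechanism used there. Nothing further is needed.
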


\section{A maximum principle}\label{s-max}

We have the following   maximum principle, see \cite{HLTT} for example.

\begin{lma}\label{max} Let $(M^n,h)$ be a complete non-compact Hermitian manifold satisfying condition:  There exists a  smooth positive real exhaustion function $\rho$  such that $|\partial \rho|^2_h+|\sqrt{-1}\partial\bar\partial \rho|_h\leq C_1$. Suppose  $g(t)$ is a solution to the Chern-Ricci flow on $M\times[0,S)$. Assume for any $0<S_1<S$, there is $C_2>0$ such that
$$
C_2^{-1}h\le g(t)
$$
for $0\leq t\le S_1$.
Let $f$ be  a smooth function    on $M\times[0,S)$ which is bounded from above such that
$$
\heat f\le0
$$
on $\{f>0\}$ in the sense of barrier. Suppose  $f\le 0$ at $t=0$, then $f\le 0$ on $M\times[0,S)$.
\end{lma}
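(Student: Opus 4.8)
The plan is to run the standard perturbed maximum principle, with the perturbation built from the exhaustion function $\rho$. First I would fix $S_1\in(0,S)$ and work on $M\times[0,S_1]$, where by hypothesis there is $C_2>0$ with $g(t)\ge C_2^{-1}h$. The key preliminary bound is that the Chern Laplacian of $\rho$ with respect to $g(t)$ is bounded above: since $|\ddb\rho|_h\le K$ (here I write $K$ for the constant $C_1$ of the statement) the Hermitian form $\ii\ddbar\rho$ satisfies $-Kh\le\ii\ddbar\rho\le Kh$, so $\Delta_{g(t)}\rho=\tr_{g(t)}(\ii\ddbar\rho)\le K\,\tr_{g(t)}h\le nKC_2=:C_3$ on $M\times[0,S_1]$; note that only the one-sided bound $g(t)\ge C_2^{-1}h$ is used here, which is exactly what the hypothesis provides. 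Since $f$ is bounded above and $\rho$ is a proper exhaustion function, $f-\epsilon\rho\to-\infty$ at spatial infinity, uniformly in $t\in[0,S_1]$, for every $\epsilon>0$.

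Next I would set, for $\epsilon>0$, $F_\epsilon=f-\epsilon\rho-\epsilon(C_3+1)t$. On $\{F_\epsilon>0\}$ we have $f\ge F_\epsilon+\epsilon\rho\ge 0$ (as $\rho>0$), so $\heat f\le0$ there in the barrier sense; using $\heat t=1$ and $\heat\rho=-\Delta_{g(t)}\rho\ge-C_3$ this gives $\heat F_\epsilon\le \epsilon\Delta_{g(t)}\rho-\epsilon(C_3+1)\le-\epsilon<0$ in the barrier sense on $\{F_\epsilon>0\}$. Suppose $\sup_{M\times[0,S_1]}F_\epsilon>0$. By the decay of $F_\epsilon$ at infinity the supremum is attained at some $(x_0,t_0)$, and since $f\le0$ and $\rho>0$ at $t=0$, necessarily $t_0>0$. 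By the definition of the barrier inequality, for any $\sigma\in(0,\epsilon)$ there is a $C^2$ function $\phi$ defined near $(x_0,t_0)$ with $\phi\ge f$, $\phi(x_0,t_0)=f(x_0,t_0)$ and $\heat\phi(x_0,t_0)\le\sigma$. Then $\psi:=\phi-\epsilon\rho-\epsilon(C_3+1)t$ is $C^2$, lies above $F_\epsilon$ near $(x_0,t_0)$ with equality at that point, hence attains a local maximum there; so $\partial_t\psi\ge0$ (time derivative at an interior point or a right endpoint of a maximum) and the complex Hessian of $\psi$ is $\le0$, so $\Delta_{g(t_0)}\psi\le0$, at $(x_0,t_0)$. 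Therefore $0\le(\partial_t-\Delta_{g})\psi(x_0,t_0)=\heat\phi(x_0,t_0)+\epsilon\Delta_{g}\rho(x_0,t_0)-\epsilon(C_3+1)\le\sigma-\epsilon<0$, a contradiction. Hence $\sup_{M\times[0,S_1]}F_\epsilon\le0$, i.e. $f\le\epsilon(\rho+(C_3+1)t)$ on $M\times[0,S_1]$; letting $\epsilon\to0$ yields $f\le0$ on $M\times[0,S_1]$, and since $S_1<S$ was arbitrary, $f\le0$ on $M\times[0,S)$.

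The only genuinely delicate point is the bookkeeping with the barrier inequality: one must ensure the support function $\phi$ is defined on a space-time neighborhood reaching $t$ slightly below $t_0$, so that the one-sided inequality $\partial_t\psi\ge0$ at the maximum is legitimate, and that subtracting the smooth quantity $\epsilon\rho+\epsilon(C_3+1)t$ preserves the upper-support property and transfers the differential inequality with the same $\sigma$; both are routine. Everything else is classical maximum-principle bookkeeping, with the estimate $\Delta_{g(t)}\rho\le C_3$ being the single place where the hypotheses on $\rho$ and the lower bound on $g(t)$ are invoked.
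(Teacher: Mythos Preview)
The paper does not give its own proof of this lemma; it is stated in Appendix~B with a reference to \cite{HLTT}. Your overall plan---subtract $\epsilon\rho+\epsilon(C_3+1)t$ from $f$, use properness of $\rho$ and the bound $\Delta_{g(t)}\rho\le nKC_2$ coming from $g(t)\ge C_2^{-1}h$, and argue by contradiction at an interior maximum---is exactly the standard argument and would match what is in \cite{HLTT}.

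There is, however, a genuine slip in the barrier step. You take an \emph{upper} support function $\phi\ge f$ and then assert that $\psi:=\phi-\epsilon\rho-\epsilon(C_3+1)t$, which lies \emph{above} $F_\epsilon$ and touches it at the maximum point $(x_0,t_0)$ of $F_\epsilon$, ``hence attains a local maximum there''. That implication is false: an upper support at a maximum point of $F_\epsilon$ need not have any maximum there (think of $F_\epsilon=-|x|^2$ and $\psi=|x|^2$). Moreover, the paper's definition of the barrier inequality (stated immediately after the lemma) uses \emph{lower} supports: for every $\epsilon'>0$ there is $\sigma\le f(\cdot,t_0)$, $C^2$ in $x$, with $\sigma(x_0)=f(x_0,t_0)$ and $\frac{\partial_-}{\partial t}f(x_0,t_0)-\Delta_{g(t_0)}\sigma(x_0)\le\epsilon'$.

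With the correct (lower) support the argument goes through cleanly: set $\tilde\sigma(x)=\sigma(x)-\epsilon\rho(x)$. Then $\tilde\sigma\le F_\epsilon(\cdot,t_0)$ near $x_0$ with equality at $x_0$, and since $F_\epsilon(\cdot,t_0)$ has a (global) spatial maximum at $x_0$ we get $\tilde\sigma(x)\le F_\epsilon(x,t_0)\le F_\epsilon(x_0,t_0)=\tilde\sigma(x_0)$, so $\tilde\sigma$ \emph{does} have a maximum at $x_0$ and hence $\Delta_{g(t_0)}\tilde\sigma(x_0)\le0$, i.e.\ $\Delta_{g(t_0)}\sigma(x_0)\le\epsilon\Delta_{g(t_0)}\rho(x_0)\le\epsilon C_3$. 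Since $(x_0,t_0)$ is a temporal maximum with $t_0>0$ one has $\frac{\partial_-}{\partial t}F_\epsilon(x_0,t_0)\ge0$, hence $\frac{\partial_-}{\partial t}f(x_0,t_0)\ge\epsilon(C_3+1)$. Plugging into the barrier inequality gives $\epsilon(C_3+1)\le\Delta_{g(t_0)}\sigma(x_0)+\epsilon'\le\epsilon C_3+\epsilon'$, i.e.\ $\epsilon\le\epsilon'$, a contradiction for $\epsilon'<\epsilon$. The rest of your write-up is fine.
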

{We say that
$$
\heat f\le \phi
$$
 in the sense of barrier means that for fixed $t_1>0$ and $x_1$,    for any $\e>0$, there is a smooth function $\sigma(x)$ near $x$ such that $\sigma(x_1)=f(x_1,t_1)$, $\sigma(x)\le f(x,t_1)$ near $x_1$, such that $\sigma$ is $C^2$ and at $(x_1,t_1)$
\bee
\frac{\p_-}{\p t}f(x,t)-\Delta  \sigma(x)\le \phi(x)+\e.
\eee
Here
\bee
\frac{\p_-}{\p t}f(x,t)=\liminf_{h\to 0^+}\frac{f(x,t)-f(x,t-h)}h.
\eee
for a function $f(x,t)$.}

\end{document}